\definecolor{Mycolor2}{HTML}{0f4c5c}
\newcommand\poro[1]{{\textcolor{purple}{#1}}}
\newcommand\boro[1]{{\textcolor{black}{#1}}}
\newcommand{\hyp}[5]{\,\mbox{}_{#1}F_{#2}\!\left(
\genfrac{}{}{0pt}{}{#3}{#4};#5\right)}
\def\cprime{$'$}
\newtheorem{thm}{Theorem}[section]
\newtheorem{cor}[thm]{Corollary}
\newtheorem{rem}[thm]{Remark}
\newtheorem{prop}[thm]{Proposition}
\def\eqnarray{\stepcounter{equation}\let\@currentlabel=\theequation
\global\@eqnswtrue
\tabskip\@centering\let\\=\@eqncr
$$\halign to \displaywidth\bgroup\hfil\global\@eqcnt\z@
$\displaystyle\tabskip\z@{##}$&\global\@eqcnt\@ne
\hfil$\displaystyle{{}##{}}$\hfil
&\global\@eqcnt\tw@ $\displaystyle{##}$\hfil
\tabskip\@centering&\llap{##}\tabskip\z@\cr}
\def\endeqnarray{\@@eqncr\egroup
\global\advance\c@equation\m@ne$$\global\@ignoretrue}
\newcommand{\expe}{{\mathrm e}}
\newcommand{\biQ}{{\mathbold{Q}}}
\let\svus_
\def\lowerit#1{\ThisStyle{\raisebox{-2\LMpt}{$\SavedStyle#1$}}\egroup}
\begin{document}

\renewcommand{\PaperNumber}{***}

\FirstPageHeading

\ShortArticleName{On the relation between
Gegenbauer polynomials and Ferrers functions}

\ArticleName{On the relation between Gegenbauer polynomials and 
the Ferrers function of the first kind}

\Author{Howard S. Cohl
$\,^{\dag}$ and 
Roberto S. Costas-Santos$\,^{\S}$
}

\AuthorNameForHeading{H.~S.~Cohl and R.~S.~Costas-Santos}
\Address{$^\dag$ Applied and Computational 
Mathematics Division, National Institute of Standards 
and Tech\-no\-lo\-gy, Mission Viejo, CA 92694, USA
\URLaddressD{
\href{http://www.nist.gov/itl/math/msg/howard-s-cohl.cfm}
{http://www.nist.gov/itl/math/msg/howard-s-cohl.cfm}
}
} 
\EmailD{howard.cohl@nist.gov} 

\Address{$^\S$ Dpto. de F\'isica y Matem\'{a}ticas,
Universidad de Alcal\'{a},
c.p. 28871, Alcal\'{a} de Henares, Spain} 
\URLaddressD{
\href{http://www.rscosan.com}
{http://www.rscosan.com}
}
\EmailD{rscosa@gmail.com} 


\ArticleDates{Received \today~in final form ????; Published online ????}

\Abstract{Using the direct relation between the 
Gegenbauer polynomials and 
the Ferrers function of the first kind, we compute
interrelations between certain Jacobi polynomials, 
Meixner 
polynomials, and the Ferrers function of the first kind.
We then compute 
Rodrigues-type and orthogonality relations for 
Ferrers functions of the first and second kinds. 
In the remainder of the paper using the relation 
between Gegenbauer polynomials and the Ferrers 
function of the first kind we derive connection 
and linearization relations, some definite integral 
and series expansions, some asymptotic 
expansions of Mehler-Heine type,
Christoffel-Darboux summation formulas, 
and infinite series closure
relations (Dirac delta distribution).
}
\Keywords{
Ferrers functions; Gegenbauer polynomials; orthogonal polynomials}

\Classification{33C05; 33C15; 33C45;42C10}

\section{Introduction}


\boro{The generalized hypergeometric
function \cite[Chapter 16]{NIST:DLMF} is defined by the 
infinite series \cite[(16.2.1)]{NIST:DLMF}
\begin{equation} \label{genhyp}
\hyp{r}{s}{a_1,\ldots,a_r}
{b_1,\ldots,b_s}{z}
:=
\sum_{k=0}^\infty
\frac{(a_1,\ldots,a_r)_k}
{(b_1,\ldots,b_s)_k}
\frac{z^k}{k!},
\end{equation}}
\boro{where $b_i\not \in -\mathbb N_0$, for $i=1, \dots, s$;}
and elsewhere by analytic continuation.
\boro{One interesting limit
which we will use below is
\cite[(1.4.4)]{Koekoeketal}
\begin{equation}
\lim_{\lambda\to\infty}
\hyp{r}{s}{a_1,\ldots,a_r}
{b_1,\ldots,b_{s-1},\lambda b_s}
{\lambda z}
=\hyp{r}{s-1}{a_1,\ldots,a_r}
{b_1,\ldots,b_{s-1}}
{\frac{z}{b_s}}.
\label{limgenhyp}
\end{equation}
}
The Pochhammer symbol for $a\in\mathbb C$,
$n\in\mathbb N_0$ 
is given by \cite[(5.2.4-5)]{NIST:DLMF}
\[
(a)_n:=(a)(a+1)\cdots(a+n-1)
.
\]
The gamma function \cite[Chapter 5]{NIST:DLMF} is related to 
the Pochhammer symbol, namely
for $a\in\mathbb C\setminus-{\mathbb N}_0$, one
 has
\[
(a)_n=\frac{\Gamma(a+n)}{\Gamma(a)},
\]
\boro{which allows one to extend the definition to non-positive 
integer values of $n$.}
We will also use the common notational product convention, e.g.,
\[
(a_1,\ldots,a_r)_k:=(a_1)_k(a_2)_k\cdots(a_r)_k.
\]

\boro{
The special case of $r=0$, $s=1$ in \eqref{genhyp} is 
connected with the
the Bessel function of the first kind
\cite[(10.16.9)]{NIST:DLMF}
\begin{equation}
J_\lambda(z):=\frac{1}{\Gamma(\lambda+1)}
\hyp01{-}{\lambda+1}{-\frac{z^2}{4}}.
\label{Besseldef}
\end{equation}
Furthermore, the Hermite polynomials, related
to the $r=2$, $s=0$ case, can be defined
by 
\cite[(9.15.1)]{Koekoeketal}
\begin{equation}
\label{Hermite}
H_n(x):=(2x)^n\hyp20{-\frac{n}{2},\frac{1-n}{2}}{-}{-\frac{1}{x^2}}.
\end{equation}
The Hermite polynomials are examples of
generalized hypergeometric functions
which are terminating, that is the
infinite series \eqref{genhyp}
truncates at a finite
value of $k$. In fact, all of the 
polynomials discussed in this 
paper will be defined in terms of
a terminating generalized hypergeometric
series.
}

\boro{
The special case of $r=2$, $s=1$ is
referred to as the Gauss hypergeometric
series \cite[Chapter 15]{NIST:DLMF}.
Many of the functions used in this paper are defined 
in terms of the Gauss hypergeometric function. 
Therefore in this manuscript, we will require properties 
for this function.
A useful asymptotic expansion for Gauss hypergeometric
series is given as follows.
\cite[(15.12.2)]{NIST:DLMF},
since $a$ or $b\in- \mathbb N_0$, \boro{for example, if $a=-m$, then}
\begin{equation}
\hyp21{a,b}{c}{z}
=\sum_{k=0}^{m-1}
\boro{\frac{(a,b)_k}{(c)_k}\frac{z^k}{k!}}
+{\mathcal O}\left(
\frac{1}{c^m}\right),
\label{DLMFhypasymp}
\end{equation}
as $c\to\infty$.
}

\boro{
The Jacobi polynomial is defined as \cite[(18.5.7)]{NIST:DLMF}
\begin{equation}
\label{Jacobipolydef}
P_n^{(\alpha,\beta)}(x):=
\frac{(\alpha+1)_n}{n!}
\hyp21{-n,n+\alpha+\beta+1}{\alpha+1}{\frac{1-x}{2}},
\end{equation}
and the Gegenbauer function 
is defined as \cite[(15.9.15)]{NIST:DLMF}
\begin{equation}\label{Gegenbauerfuncdef}
C_\alpha^\lambda(\boro{z}):=\frac{\Gamma(2\lambda+\alpha)}
{\Gamma(2\lambda)\Gamma(\alpha+1)}
\hyp21{-\alpha,2\lambda+\alpha}{\lambda+\frac12}{\frac{1-z}{2}},
\end{equation}
and the classical orthogonal Gegenbauer (ultraspherical) 
polynomial is given when $\alpha\in \mathbb N_0$, which makes 
the Gauss hypergeometric function terminating.
Note that the Gegenbauer polynomial can be given in terms
of the symmetric Jacobi polynomial as \cite[(18.7.1)]{NIST:DLMF}
\begin{equation}\label{GegJac}
C_n^\lambda(x)=\frac{(2\lambda)_n}{(\lambda+\frac12)_n}
P_n^{(\lambda-\frac12,\lambda-\frac12)}(x).
\end{equation}
}
\boro{
Furthermore, there is also
\cite[(18.7.15)]{NIST:DLMF}
\begin{equation}
\label{Geg2nJac}
C_{2n}^\lambda(x)=
\frac{(\lambda)_n}{(\frac12)_n}
P_n^{(\lambda-\frac12,-\frac12)}
(2x^2-1),
\end{equation}
and \cite[(18.7.16)]{NIST:DLMF}
\begin{equation}
\label{Geg2npJac}
C_{2n+1}^\lambda(x)=
\frac{(\lambda)_{n+1}}{(\frac12)_{n+1}}
xP_n^{(\lambda-\frac12,\frac12)}
(2x^2-1).
\end{equation}
}

\boro{
Ferrers functions and Legendre functions are given
in terms of Gauss hypergeometric functions which satisfy
both linear and quadratic transformations. There are many
such transformations and therefore there are many hypergeometric
representations for the Gauss hypergeometric function.
The Ferrers function of the first kind 
\boro{(associated Legendre
function of the first kind on-the-cut)}
\boro{${\sf P}_\nu^{-\mu}:(-1,1)\to\mathbb C$}
can for instance be defined
as \cite[(14.3.1)]{NIST:DLMF}
\begin{equation}
{\sf P}_\nu^{-\mu}(x)=
\frac{1}{\Gamma(\mu+1)}
\left(\frac{1-x}{1+x}\right)^{\frac{\mu}{2}}
\hyp21{-\nu,\nu+1}{\mu+1}{\frac{1-x}{2}}.
\label{FerPdefn}
\end{equation}
\boro{
The Ferrers function of the second kind
${\sf Q}_\nu^\mu:(-1,1)\to\mathbb C$ 
can also be defined
in terms of Gauss hypergeometric representations. 
See for instance the recent publication \cite{Cohletal2021} where all Gauss 
hypergeometric representations of the Ferrers functions 
of the second kind are given.}
\boro{The Ferrers function of the second kind (associated 
Legendre function of the second kind on-the-cut) 
for instance, is given in \cite[(14.3.2)]{NIST:DLMF}
\begin{eqnarray}
{\sf Q}_\nu^\mu(x):=\frac{\pi}{2\sin(\pi\mu)}&\Biggl(
&\frac{\cos(\pi\mu)}{\Gamma(1-\mu)}\left(\frac{1+x}
{1-x}\right)^{\frac{\mu}{2}}\hyp21{-\nu,\nu+1}{1-\mu}
{\frac{1-x}{2}}\nonumber\\
&-&\frac{\Gamma(\nu+\mu+1)}{\Gamma(\nu-\mu+1)}
\left(\frac{1-x}{1+x}\right)^{\frac{\mu}{2}}
\frac{1}{\Gamma(1+\mu)}
\hyp21{-\nu,\nu+1}{1+\mu}{\frac{1-x}{2}}\Biggr),
\label{FerQdefn}
\end{eqnarray}
where $\mu\not\in\mathbb Z$.
However, ${\sf Q}_\nu^\mu$ can be analytically continued 
for $\mu\in\mathbb Z$ which is demonstrated by 
\cite[(14.3.12)]{NIST:DLMF}.}
\boro{Note that in this paper we will often
indicate that the domain of the Ferrers function of the first
and second kinds is $(-1,1)$. But it should
be emphasized that, in general and for specific formulas, this 
region can be analytically continued to a much larger region 
in the complex plane.}
It will often be convenient
to express the Ferrers function of
the first kind in terms of a 
Gauss hypergeometric series in a different way.
One powerful such series representation is given as follows.
}
\boro{
\begin{thm}
Let $\nu,\mu\in\mathbb C$, $x\in(-1,1)$. Then
\begin{equation} \label{FerPnummu}
{\sf P}_{\nu}^{-\mu}(x)=\frac{x^{\nu-\mu}(1-x^2)^{\frac{\mu}{2}}}
{2^\mu\Gamma(\mu+1)}\hyp21{\frac{\mu-\nu}{2},\frac{\mu-\nu+1}{2}}
{\mu+1}{1-\frac{1}{x^2}}.
\end{equation}
\end{thm}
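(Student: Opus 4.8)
The plan is to derive \eqref{FerPnummu} from the defining Gauss hypergeometric representation \eqref{FerPdefn} by composing three classical transformations of the ${}_2F_1$, namely one Euler transformation, one quadratic transformation, and one Pfaff transformation, all of which are collected in \cite[\S 15.8]{NIST:DLMF}. First I would apply Euler's transformation $\hyp21{a,b}{c}{z}=(1-z)^{c-a-b}\hyp21{c-a,c-b}{c}{z}$ to \eqref{FerPdefn} with $(a,b,c)=(-\nu,\nu+1,\mu+1)$ and $z=\frac{1-x}{2}$; here $c-a-b=\mu$ and $1-z=\frac{1+x}{2}$, so the factor $\bigl(\frac{1+x}{2}\bigr)^{\mu}$ produced by the transformation combines with the prefactor $\bigl(\frac{1-x}{1+x}\bigr)^{\mu/2}$ to leave the purely algebraic factor $(1-x^{2})^{\mu/2}/2^{\mu}$. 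This yields the intermediate representation
\[
{\sf P}_{\nu}^{-\mu}(x)=\frac{(1-x^{2})^{\mu/2}}{2^{\mu}\Gamma(\mu+1)}\,
\hyp21{\mu-\nu,\mu+\nu+1}{\mu+1}{\frac{1-x}{2}}.
\]

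The key observation is that $(\mu-\nu)+(\mu+\nu+1)+\tfrac12=2(\mu+1)$, so on setting $2a=\mu+\nu+1$ and $2b=\mu-\nu$ the bottom parameter is exactly $a+b+\tfrac12$; hence the quadratic transformation $\hyp21{2a,2b}{a+b+\frac12}{t}=\hyp21{a,b}{a+b+\frac12}{4t(1-t)}$ applies with $t=\frac{1-x}{2}$, where $4t(1-t)=1-x^{2}$, and converts the series above into $\hyp21{\frac{\mu-\nu}{2},\frac{\mu+\nu+1}{2}}{\mu+1}{1-x^{2}}$. Finally I would apply the Pfaff transformation $\hyp21{A,B}{C}{u}=(1-u)^{-B}\hyp21{C-A,B}{C}{\frac{u}{u-1}}$ with $u=1-x^{2}$, $A=\frac{\mu+\nu+1}{2}$, $B=\frac{\mu-\nu}{2}$, $C=\mu+1$: since $1-u=x^{2}$, $C-A=\frac{\mu-\nu+1}{2}$, and $\frac{u}{u-1}=1-\frac{1}{x^{2}}$, this produces the factor $(x^{2})^{(\nu-\mu)/2}=x^{\nu-\mu}$ together with $\hyp21{\frac{\mu-\nu}{2},\frac{\mu-\nu+1}{2}}{\mu+1}{1-\frac{1}{x^{2}}}$, and collecting the algebraic prefactors reproduces \eqref{FerPnummu} exactly.

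I expect the main obstacle to be not the algebra but the bookkeeping of branches and domains: the quadratic transformation is an identity of holomorphic functions only where both hypergeometric series converge (in effect for $t=\frac{1-x}{2}$ with $|t|<\tfrac12$, i.e.\ $x\in(0,1)$), and the fractional powers $(x^{2})^{(\nu-\mu)/2}$ and $\bigl(\frac{1-x}{1+x}\bigr)^{\mu/2}$ must be assigned principal values consistently; the identity then propagates to all of $(-1,1)$, and to the stated complex continuation, by analytic continuation in $x$, with the degenerate values $\mu\in-\mathbb N$ handled by the limiting convention already adopted for the Pochhammer symbol and the gamma function. I also note that the second and third steps can be merged into a single prefactored quadratic transformation, and that reading the intermediate representation above through \eqref{Gegenbauerfuncdef} already exhibits the right-hand side as a constant multiple of the Gegenbauer function $C_{\nu-\mu}^{\mu+1/2}(x)$, which is the relation underpinning the remainder of the paper.
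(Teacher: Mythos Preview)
Your proof is correct and takes a genuinely different route from the paper. The paper derives \eqref{FerPnummu} by first expressing ${\sf P}_\nu^{-\mu}$ as a linear combination of ${\sf Q}_\nu^{\pm\mu}$ via the connection relation \cite[(14.9.1)]{NIST:DLMF}, then substituting a two-term hypergeometric representation of ${\sf Q}_\nu^\mu$ (already in the argument $1-1/x^2$) established in the companion paper \cite{CohlCostasSantos2020}; after the ${\sf Q}_\nu^{\mu}$ and ${\sf Q}_\nu^{-\mu}$ contributions are combined, the pieces with bottom parameter $1-\mu$ cancel and only the desired ${}_2F_1$ survives. Your argument stays entirely within the ${}_2F_1$ calculus, chaining an Euler, a quadratic, and a Pfaff transformation on the defining series \eqref{FerPdefn}. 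What you gain is self-containment: you never invoke ${\sf Q}$ or the external representation from \cite{CohlCostasSantos2020}. What the paper's approach buys is that, once that ${\sf Q}$ representation is available, the proof is a single substitution with no quadratic-transformation domain tracking. Your closing remarks---that the chain is an identity of analytic functions first on $x\in(0,1)$ (where $t=\tfrac{1-x}{2}\in(0,\tfrac12)$ keeps the quadratic step honest) and then extends by continuation, with $(x^2)^{(\nu-\mu)/2}=x^{\nu-\mu}$ requiring a consistent principal-branch choice---are precisely the care the paper's proof leaves implicit.
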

}
\boro{
\begin{proof}
Applying the connection relation \cite[(14.9.1)]{NIST:DLMF}
\[
{\sf P}_{\nu}^{-\mu}(x)=-\frac{2}
{\pi}\csc(\pi\mu)
\frac{\Gamma(\nu-\mu+1)}
{\Gamma(\nu+\mu+1)}
{\sf Q}_\nu^\mu(x)
+\frac{2}{\pi}\cot(\pi\mu)
{\sf Q}_\nu^{-\mu}(x),
\]
to the Gauss hypergeometric representation of
the Ferrers function of the second
kind \cite[(49)]{CohlCostasSantos2020}
\begin{eqnarray*}
&&\hspace{-0.5cm}
{\sf Q}_\nu^\mu(x)=\frac{2^{\mu-1}\cos(\pi\mu)}
{(1-x^2)^{\frac{\mu}{2}}}
\Gamma(\mu)x^{\nu+\mu}\hyp21{\frac{-\nu-\mu}{2}, 
\frac{-\nu-\mu+1}{2}}{1-\mu}{1-\frac{1}{x^2}}
\nonumber\\ 
&&\hspace{3cm}
+\frac{\Gamma(\nu+\mu+1)\Gamma(-\mu)}{2^{\mu+1}
\Gamma(\nu-\mu+1)}(1-x^2)^{\frac{\mu}{2}} x^{\nu-\mu}
\hyp21{\frac{\mu-\nu}{2},\frac{\mu-\nu+1}{2}}{\mu+1}
{1-\frac{1}{x^2}},
\end{eqnarray*} 
and after some simplification, one completes the proof.
\end{proof}
}

\boro{One special case which we will encounter frequently 
below is as follows.}

\boro{
\begin{cor}
Let $n\in \mathbb N_0$, $x\in(-1,1)$. Then
\begin{equation} \label{FerPnlammlan}
{\sf P}_{n+\lambda}^{-\lambda}(x)=
\dfrac{x^n(1-x^2)^{\frac{\lambda}{2}}}{2^\lambda\Gamma(\lambda+1)}
\hyp21{-\frac{n}{2},\frac{1-n}{2}}{1+\lambda}{1-\dfrac{1}{x^2}}
=\boro{\frac{n!}{(2\lambda+1)_n}
{\sf P}_{\lambda}^{-\lambda}(x) C_n^
{\lambda+\frac 12}(x)}.
\end{equation}
\end{cor}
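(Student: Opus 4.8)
The first equality in \eqref{FerPnlammlan} requires no argument: it is \eqref{FerPnummu} at $\nu=n+\lambda$, $\mu=\lambda$, for then $\nu-\mu=n\in\mathbb N_0$, $\frac{\mu-\nu}{2}=-\frac n2$, $\frac{\mu-\nu+1}{2}=\frac{1-n}{2}$ and $\mu+1=\lambda+1$, which reproduces the middle member of \eqref{FerPnlammlan} verbatim.

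For the second equality the plan is to first evaluate ${\sf P}_\lambda^{-\lambda}$ by putting $n=0$ in the equality just obtained (equivalently $\nu=\mu=\lambda$ in \eqref{FerPnummu}): the top parameter $\frac{\mu-\nu}{2}$ then vanishes, the Gauss series is identically $1$, and
\[
{\sf P}_\lambda^{-\lambda}(x)=\frac{(1-x^2)^{\lambda/2}}{2^\lambda\,\Gamma(\lambda+1)}.
\]
Inserting this into the right-hand side of \eqref{FerPnlammlan} and cancelling the common factor $(1-x^2)^{\lambda/2}/(2^\lambda\Gamma(\lambda+1))$, the Corollary becomes equivalent to the single polynomial identity
\[
x^n\,\hyp21{-\frac n2,\frac{1-n}{2}}{\lambda+1}{1-\frac1{x^2}}
=\frac{n!}{(2\lambda+1)_n}\,C_n^{\lambda+\frac12}(x).
\]

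To establish this I would apply Pfaff's transformation \cite[\S15.8]{NIST:DLMF} to the ${}_2F_1$, \emph{using the numerator parameter that is a non-positive integer} --- this is $-\frac n2$ when $n$ is even and $\frac{1-n}{2}$ when $n$ is odd, in both cases equal to $-\lfloor n/2\rfloor=:-m$. With $z=1-\frac1{x^2}$ one has $1-z=\frac1{x^2}$, so the prefactor $(1-z)^{-a}=x^{-2m}$ is an honest monomial (no branch of a fractional power intervenes --- this is the one delicate point, since the other choice of parameter would produce $(x^2)^{n/2}$, which has the wrong sign for $x<0$, $n$ odd), and the new argument is $z/(z-1)=1-x^2$. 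Absorbing $x^n$ then gives
\[
x^n\,\hyp21{-\frac n2,\frac{1-n}{2}}{\lambda+1}{1-\frac1{x^2}}=
\begin{cases}
\hyp21{-m,m+\lambda+\frac12}{\lambda+1}{1-x^2}, & n=2m,\\[2ex]
x\,\hyp21{-m,m+\lambda+\frac32}{\lambda+1}{1-x^2}, & n=2m+1.
\end{cases}
\]
Writing $1-x^2=\frac{1-(2x^2-1)}{2}$, the Jacobi representation \eqref{Jacobipolydef} identifies the right-hand sides with $\frac{m!}{(\lambda+1)_m}P_m^{(\lambda,-1/2)}(2x^2-1)$ and $\frac{m!}{(\lambda+1)_m}\,x\,P_m^{(\lambda,1/2)}(2x^2-1)$, and then \eqref{Geg2nJac}, \eqref{Geg2npJac} with $\lambda\mapsto\lambda+\frac12$ turn these into constant multiples of $C_{2m}^{\lambda+1/2}(x)$ and $C_{2m+1}^{\lambda+1/2}(x)$. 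It remains only to check that the accumulated constants equal $\frac{n!}{(2\lambda+1)_n}$ in both parities, which is the Pochhammer duplication formula $(a)_{2m}=4^m(\frac a2)_m(\frac{a+1}{2})_m$ (and its $(a)_{2m+1}$ analogue) applied to $a=1$ and $a=2\lambda+1$.

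The substantive step is the single Pfaff transformation; everything after it is parity bookkeeping and Pochhammer arithmetic, and the one place requiring care is exactly the choice of parameter in that step. Two alternative routes also work: one may prove the displayed identity first for $x\in(0,1)$ via the quadratic transformation $\hyp21{2a,2b}{a+b+\frac12}{w}=\hyp21{a,b}{a+b+\frac12}{4w(1-w)}$ at $w=\frac{1-x}{2}$ (so $4w(1-w)=1-x^2$) and then extend by analytic continuation; or one may observe that $(1-x^2)^{-\lambda/2}{\sf P}_{n+\lambda}^{-\lambda}(x)$ is a polynomial solution of the Gegenbauer differential equation of parameter $\lambda+\frac12$, hence a scalar multiple of $C_n^{\lambda+1/2}(x)$, the scalar being fixed by evaluation at $x=1$, where $C_n^{\lambda+1/2}(1)=(2\lambda+1)_n/n!$.
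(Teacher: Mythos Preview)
Your proof is correct. For the first equality you and the paper do the same substitution into \eqref{FerPnummu}. For the second equality the routes diverge. The paper does not pass through the middle member at all: it simply equates the first and third members by invoking the Gegenbauer hypergeometric definition \eqref{Gegenbauerfuncdef} together with the closed form ${\sf P}_\lambda^{-\lambda}(x)=(1-x^2)^{\lambda/2}/(2^\lambda\Gamma(\lambda+1))$ --- effectively the content of the later identity \eqref{relFerPGeg} specialized to $\nu=n+\lambda$, $\mu=\lambda$ (equivalently, one Euler transformation applied to \eqref{FerPdefn}). You instead prove that the \emph{middle} member equals the third: a Pfaff transformation on the integer numerator parameter (with your careful observation that the other choice would introduce a spurious sign for $x<0$, $n$ odd), a parity split, recognition as Jacobi polynomials via \eqref{Jacobipolydef}, and then the quadratic relations \eqref{Geg2nJac}, \eqref{Geg2npJac}. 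Your argument is longer but makes explicit the link between the two distinct Gauss representations of $C_n^{\lambda+1/2}$; the paper's is a one-liner because it treats the Ferrers--Gegenbauer correspondence as a single known fact. Of the alternatives you sketch at the end, the differential-equation argument is closest in spirit to what the paper is doing, since \eqref{relFerPGeg} is exactly the statement that $(1-x^2)^{-\lambda/2}{\sf P}_{n+\lambda}^{-\lambda}(x)$ is a constant multiple of $C_n^{\lambda+1/2}(x)$.
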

}
\boro{
\begin{proof}
\boro{Letting $\mu\mapsto \lambda$, and $\nu\mapsto 
\lambda+n$ in \eqref{FerPnummu} proves
the first relation. The second identity holds due to the 
definition of the Gegenbauer function \eqref{Gegenbauerfuncdef}
for $\alpha\in\mathbb N_0$ and taking into account 
\cite[(14.5.18)]{NIST:DLMF}
\[
{\sf P}_{\lambda}^{-\lambda}(x)=
\frac{(1-x^2)^{\frac \lambda2}}{2^{\lambda}\Gamma(\lambda+1)}.
\]
This completes the proof.}
\end{proof}
}

\boro{
Some other special cases which are interesting 
and will prove to be useful 
include \cite[(14.5.11)]{NIST:DLMF}
\begin{eqnarray}
&&\hspace{-7.5cm}{\sf P}_\nu^{\frac12}(\cos\theta)=
\sqrt{\frac{2}{\pi\sin\theta}}\cos((\nu+\tfrac12)
\theta),\label{FerPhalf}\\
&&\hspace{-7.5cm}{\sf P}_\nu^{-\frac12}(\cos\theta)=
\sqrt{\frac{2}{\pi\sin\theta}}
\frac{\sin((\nu+\tfrac12)\theta)}{\nu+\tfrac12}.
\label{FerPmhalf}
\end{eqnarray}
}

\medskip
\boro{
Using the connection relation
\cite[(14.9.1)]{NIST:DLMF}, the Ferrers function of the first kind 
can be expressed in terms of the 
Ferrers function of the second kind, namely
\begin{equation}
{\sf P}_{k+n-\frac12}^{\frac12-n}(x)
=\frac{2(-1)^nk!}{\pi(2n+k-1)!}
{\sf Q}_{k+n-\frac12}^{n-\frac12}(x).
\label{FerPFerQ}
\end{equation}
}
The Ferrers function of the second kind has an interesting
trigonometric special case for $\mu=\frac12$, namely 
\cite[(14.5.13)]{NIST:DLMF}
\begin{equation} \label{Qhalf}
{\sf Q}_\nu^{\frac12}(\cos\theta)
=-\sqrt{\dfrac{\pi}{2\sin\theta}}
\sin((\nu+\tfrac12)\theta),
\end{equation}
and for $\mu=-\frac12$, namely 
\cite[(14.5.14)]{NIST:DLMF}
\begin{equation} \label{Qmhalf}
{\sf Q}_\nu^{-\frac12}(\cos\theta)
=\sqrt{\dfrac{\pi}{2\sin\theta}}
\dfrac{\cos((\nu+\tfrac12)\theta)}{\nu+\frac12}.
\end{equation}

\boro{
The Ferrers function of the first kind is related to 
the Gegenbauer function \cite[(14.3.21)]{NIST:DLMF}
\begin{eqnarray} \label{relFerPGeg}
{\sf P}_\nu^{-\mu}(x)&=&\frac{\Gamma(2\mu+1)\Gamma(\nu-\mu+1)}
{2^\mu\Gamma(\nu+\mu+1)\Gamma(\mu+1)}(1-x^2)^{\frac{\mu}{2}}
C_{\nu-\mu}^{\mu+\frac12}(x)
\\
&=&\boro{\frac{\Gamma(2\mu+1)\Gamma(\nu-\mu+1)}{\Gamma(\nu+\mu+1)}
{\sf P}_\mu^{-\mu}(x)C_{\nu-\mu}^{\mu+\frac12}(x)},
\nonumber
\end{eqnarray}
where $2\mu+1$, $\nu-\mu+1\not\in- \mathbb N_0$.
Equivalently 
\begin{eqnarray} \label{ClammuFerP}
C_\lambda^\mu(x)=\dfrac{\sqrt{\pi}\,\Gamma(2\mu+\lambda)}
{2^{\mu-\frac12}\Gamma(\mu)\Gamma(\lambda+1)}
\frac{{\sf P}_{\lambda+\mu-\frac12}^{\frac12-\mu}(x)}
{(1-x^2)^{\frac{\mu}{2}-\frac14}}
&=&\boro{\frac{\Gamma(2\mu+\lambda)}
{\Gamma(2\mu)\Gamma(\lambda+1)}\dfrac{{\sf 
P}_{\lambda+\mu-\frac 12}^{\frac 12-\mu}(x)}
{{\sf P}_{\mu-\frac 12}^{\frac12-\mu}(x)}}
, 
\end{eqnarray}
or with $\lambda=n\in \mathbb N_0$, then
\begin{equation} \label{CnmuFerP}
C_n^\mu(x)=\frac{\sqrt{\pi}\,\Gamma(2\mu+n)}
{2^{\mu-\frac12}\Gamma(\mu)n!}
\frac{{\sf P}_{n+\mu-\frac12}^{\frac12-\mu}(x)}
{(1-x^2)^{\frac{\mu}{2}-\frac14}}.
\end{equation}
}

\boro{
In order to compute Sobolev orthogonality
which arises from Gegenbauer polynomials 
in terms of Ferrers functions we will require
several particular interrelations between 
Gegenbauer polynomials in terms of
Ferrers functions.
}
\boro{
\begin{thm}
Let $x\in(-1,1)$, $n\in{\mathbb N}_0$, $N\in{\mathbb N}$ such that
$n\le 2N-1$. Then
\begin{eqnarray}
&&\hspace{-1cm}C_n^{\frac12-N}(x)=
\label{Sob1PP}
\frac{2^{N-1}}{\sqrt{\pi}}
\Gamma(N+\tfrac12)(1-x^2)^{\frac{N}{2}}
\left({\sf P}_{n-N}^{-N}(x)
+(-1)^n{\sf P}_{n-N}^{-N}(-x)\right)\\
&&\hspace{0.65cm}=\frac{2^{N}}{\sqrt{\pi}}
\Gamma(N+\tfrac12)(1-x^2)^{\frac{N}{2}}
\left(
{\sf P}_{n-N}^{-N}(x)
+\frac{(-1)^{n+N}}{n!(2N-n-1)!}
{\sf Q}_{n-N}^N(x)\right).
\label{Sob1PQ}
\end{eqnarray}
\end{thm}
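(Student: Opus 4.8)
\textit{Plan of proof.} The idea is to establish \eqref{Sob1PP} first, reducing both sides to a single terminating Gauss series, and then to deduce \eqref{Sob1PQ} from it by a reflection formula. A convenient starting point is the Euler transform of \eqref{FerPdefn},
\[
{\sf P}_{\nu}^{-\mu}(x)=\frac{(1-x^2)^{\mu/2}}{2^\mu\Gamma(\mu+1)}\hyp21{\mu+\nu+1,\mu-\nu}{\mu+1}{\frac{1-x}{2}},
\]
in which, with $\nu=n-N$, $\mu=N$, the lower parameter $N+1$ equals $\tfrac12$ of one plus the sum of the upper parameters $n+1$ and $2N-n$; this is exactly the configuration governed by a quadratic transformation of the Gauss function.

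For \eqref{Sob1PP} I would form ${\sf P}_{n-N}^{-N}(x)+(-1)^n{\sf P}_{n-N}^{-N}(-x)$ and apply that quadratic transformation to each term (the constant being supplied by the second Gauss summation theorem, i.e.\ Bailey's evaluation of $\,{}_2F_1$ at $\tfrac12$): the combination then collapses, according to the parity of $n$, to $(1-x^2)^{N}$ times a single $\,{}_2F_1$ in $x^2$ with lower parameter $\tfrac12$ (if $n=2m$) or $\tfrac32$ (if $n=2m+1$); one further Euler transformation absorbs the factor $(1-x^2)^{N}$, leaving the terminating series $\hyp21{-m,m-N+\frac12}{\frac12}{x^2}$, respectively $x\hyp21{-m,m-N+\frac32}{\frac32}{x^2}$. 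On the other side, \eqref{Geg2nJac}--\eqref{Geg2npJac} with $\lambda=\tfrac12-N$ express $C_n^{\frac12-N}(x)$ through $P_m^{(-N,-1/2)}(2x^2-1)$ and $xP_m^{(-N,1/2)}(2x^2-1)$, hence via \eqref{Jacobipolydef} through $\hyp21{-m,m-N+\frac12}{1-N}{1-x^2}$, respectively $x\hyp21{-m,m-N+\frac32}{1-N}{1-x^2}$ --- genuine polynomials, since the series terminate at $k=m\le N-1$ before the lower parameter $1-N$ can cause trouble. The two $\,{}_2F_1$'s are matched by the reversal identity for terminating Gauss series, $\hyp21{-m,b}{c}{y}=\frac{(c-b)_m}{(c)_m}\hyp21{-m,b}{1-m+b-c}{1-y}$, after which the residual constant reduces to the elementary Pochhammer identities $(\tfrac12-N)_m=(-1)^m(N-m+\tfrac12)_m$ and $(1-N)_m=(-1)^m(N-m)_m$ (and their obvious shift for odd $n$). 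As an independent check, evaluating \eqref{Sob1PP} at $x=1$ annihilates the term ${\sf P}_{n-N}^{-N}(x)$, and Gauss's $\,{}_2F_1(1)$ summation applied to the surviving ${\sf P}_{n-N}^{-N}(-x)$, together with $C_n^{\lambda}(1)=(2\lambda)_n/n!$, reduces both sides to $(-1)^n(2N-1)!/\left(n!\,(2N-n-1)!\right)$.

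For \eqref{Sob1PQ}, note that \eqref{Sob1PP} and \eqref{Sob1PQ} differ only in their last summand, so subtracting them shows that \eqref{Sob1PQ} is equivalent, given \eqref{Sob1PP}, to the connection formula
\[
{\sf Q}_{n-N}^{N}(x)=\frac{(-1)^{N}n!\,(2N-n-1)!}{2}\left({\sf P}_{n-N}^{-N}(-x)-(-1)^n{\sf P}_{n-N}^{-N}(x)\right),
\]
which is the specialization, to integer order $N$ and degree $n-N\in\{-N,\dots,N-1\}$, of the standard formula expressing ${\sf Q}_\nu^\mu$ with $\mu\in{\mathbb N}$ through ${\sf P}_\nu^{-\mu}(\pm x)$ (the reflection formulas of \cite[\S14.9]{NIST:DLMF}); alternatively it may be verified directly in hypergeometric form exactly as in the previous paragraph. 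The one genuinely delicate point, and the main obstacle, is that these parameters sit precisely at the degeneracies of the standard apparatus: the order $-N$ is a negative integer and $n-N$ lies strictly between $-N$ and $N$, so ${\sf P}_{n-N}^{N}\equiv0$ and the off-the-shelf identities \eqref{relFerPGeg}, \eqref{ClammuFerP} --- which require $2\mu+1,\ \nu-\mu+1\notin-{\mathbb N}_0$ --- collapse to $0/0$ or $0\cdot\infty$. The route above skirts this by staying inside the class of terminating series throughout; the single unavoidable limit of a ratio of $\Gamma$-functions, needed only for \eqref{Sob1PQ}, is exactly what produces the factor $n!\,(2N-n-1)!$.
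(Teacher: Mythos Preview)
Your argument is correct, but it takes a substantially more hands-on route than the paper's for \eqref{Sob1PP}. The paper never unpacks any hypergeometric series: it simply combines \eqref{CnmuFerP} with the DLMF connection relation \cite[(14.9.7)]{NIST:DLMF}, rewritten as \eqref{Pconmx}, for \emph{generic} $\lambda$, and only afterwards substitutes $\lambda=\tfrac12-N$. The singular factor $\Gamma(2\lambda+n)$ that worries you appears once in the numerator of \eqref{CnmuFerP} and once in the denominator of \eqref{Pconmx}, so it cancels before the substitution; the remaining prefactor $1/(\Gamma(\lambda)\sin\pi\lambda)=\Gamma(1-\lambda)/\pi$ is perfectly finite at $\lambda=\tfrac12-N$ and yields the $2^{N-1}\Gamma(N+\tfrac12)/\sqrt{\pi}$ directly. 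Your quadratic--transformation--plus--reversal argument is essentially a from-scratch derivation of that connection relation in this degenerate regime; it is more self-contained and keeps every series terminating, at the cost of being longer and splitting into parity cases. For \eqref{Sob1PQ} the two approaches coincide: both reduce to expressing ${\sf Q}_{n-N}^{N}(x)$ as a combination of ${\sf P}_{n-N}^{-N}(\pm x)$, which the paper cites as \cite[(14.9.9)]{NIST:DLMF}. One small remark: the constants $A,B$ in the quadratic transformation you invoke are gamma-ratios coming from the standard connection formulas (e.g.\ \cite[(15.8.27--28)]{NIST:DLMF}); Bailey's ${}_2F_1(\tfrac12)$ evaluation is a way to check them, not their source.
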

}
\boro{
\begin{proof} 
The connection relation \cite[(14.9.7)]{NIST:DLMF} produces
\begin{equation}
{\sf P}_{n+\lambda-\frac12}^{\frac12-\lambda}(x)
=\frac{n!}{2\sin(\pi\lambda)\Gamma(2\lambda+n)}
\left({\sf P}_{n+\lambda-\frac12}^{\lambda-\frac12}(x)+
(-1)^n{\sf P}_{n+\lambda-\frac12}^{\lambda-\frac12}(-x)\right).
\label{Pconmx}
\end{equation}
Applying \eqref{Pconmx} to \eqref{CnmuFerP} 
and substituting $\lambda=\frac12-N$ produces
\eqref{Sob1PP}. The application of 
the connection relation \cite[(14.9.9)]{NIST:DLMF} to 
the Ferrers function of the first kind 
with argument $-x$ in \eqref{Sob1PP} with
simplification produces \eqref{Sob1PQ}.
This completes the proof.
\end{proof}
}

\boro{
\begin{thm}
Let $x\in(-1,1)$, $n\in{\mathbb N}_0$, $N\in{\mathbb N}$ such that
$n\ge 2N$. Then,
\begin{eqnarray}
&&\hspace{-1cm}C_n^{\frac12-N}(x)=
\label{Sob2P}
\frac{2^{N}}{\sqrt{\pi}}
\Gamma(N+\tfrac12)\frac{(n-2N)!}{n!}(1-x^2)^{\frac{N}{2}}
{\sf P}_{n-N}^{N}(x).
\end{eqnarray}
\end{thm}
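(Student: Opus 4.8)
The plan is to obtain \eqref{Sob2P} simply by specializing the Gegenbauer--Ferrers relation \eqref{CnmuFerP} to the parameter value $\mu=\tfrac12-N$. First I would substitute $\mu=\tfrac12-N$ into \eqref{CnmuFerP}. The substitutions on the Ferrers function are mechanical: its order is $\tfrac12-\mu=N$ and its degree is $n+\mu-\tfrac12=n-N$, so ${\sf P}_{n+\mu-\frac12}^{\frac12-\mu}(x)$ becomes ${\sf P}_{n-N}^{N}(x)$, while $\tfrac\mu2-\tfrac14=-\tfrac N2$ turns the $(1-x^2)$-power into $(1-x^2)^{N/2}$. The hypothesis $n\ge 2N$ enters exactly here: it makes $2\mu+n=n-2N+1$ a positive integer, so $\Gamma(2\mu+n)=(n-2N)!$ is finite and nonzero and no degeneracy occurs. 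This is precisely what distinguishes the present range from the companion range $n\le 2N-1$ of the preceding theorem, where $\Gamma(2\mu+n)$ has a pole that must be paired against a vanishing Ferrers function via an extra connection relation; here a single substitution suffices and no symmetrization in $x\mapsto-x$ is needed.

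After these substitutions the resulting identity already matches \eqref{Sob2P} in its Ferrers function and in the power $(1-x^2)^{N/2}$, and the only remaining task is to rewrite the constant $\dfrac{\sqrt\pi\,\Gamma(2\mu+n)}{2^{\mu-1/2}\,\Gamma(\mu)\,n!}\Big|_{\mu=\frac12-N}=\dfrac{2^{N}\sqrt\pi\,(n-2N)!}{\Gamma(\frac12-N)\,n!}$ in the form $\dfrac{2^{N}}{\sqrt\pi}\,\Gamma(N+\tfrac12)\,\dfrac{(n-2N)!}{n!}$ appearing in the statement. I would do this with the reflection formula $\Gamma(\tfrac12-N)\,\Gamma(\tfrac12+N)=\pi/\sin\!\big(\pi(\tfrac12-N)\big)$ (equivalently with the closed forms $\Gamma(\tfrac12-N)=(-1)^N\sqrt\pi\,4^N N!/(2N)!$ and $\Gamma(N+\tfrac12)=\sqrt\pi\,(2N)!/(4^N N!)$), which converts $1/\Gamma(\tfrac12-N)$ into the required multiple of $\Gamma(N+\tfrac12)$; collecting the powers of $2$ then gives \eqref{Sob2P}.

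The only delicate point -- more a subtlety than a genuine obstacle -- is the legitimacy of the specialization, together with the attendant sign bookkeeping. The value $\mu=\tfrac12-N$ sits on the boundary of validity of \eqref{CnmuFerP}, and the raw hypergeometric definition \eqref{Gegenbauerfuncdef} of $C_n^{\frac12-N}$ is of indeterminate $0\cdot\infty$ type there (the prefactor $1/\Gamma(2\lambda)$ vanishes while the terminating series diverges term by term, since $(\lambda+\tfrac12)_k\to 0$ for $k\ge N$). Hence one should read $C_n^{\frac12-N}$ as $\lim_{\lambda\to\frac12-N}C_n^\lambda(x)$ and check that both sides of \eqref{CnmuFerP} extend continuously to $\mu=\tfrac12-N$ once $n\ge 2N$; one must also keep the sign coming from $\sin(\pi(\tfrac12-N))=\cos(\pi N)$ consistent with the normalization used for ${\sf P}_{n-N}^{N}(x)$. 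With the constant so simplified, \eqref{Sob2P} follows.
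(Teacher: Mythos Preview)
Your approach is exactly the paper's: the paper's entire proof is the single line ``Starting with \eqref{CnmuFerP} and substituting $\lambda=\tfrac12-N$ completes the proof,'' which is precisely the specialization you carry out. Your added discussion of the reflection-formula simplification of $\sqrt\pi/\Gamma(\tfrac12-N)$ and of the limiting interpretation of $C_n^{\frac12-N}$ fills in details the paper leaves implicit, but the route is the same.
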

}
\boro{
\begin{proof}
Starting with \eqref{CnmuFerP} and 
substituting $\lambda=\frac12-N$
completes the proof.
\end{proof}
}

\boro{
Here we give interrelations between certain
Jacobi polynomials and the Ferrers function
of the first kind from Gegenbauer polynomials
using a quadratic transformation.}

\boro{
\begin{cor}
Let $n\in \mathbb N_0$, $\lambda\in\mathbb C$.
Then 
\begin{equation}
P_n^{(\lambda-\frac12,-\frac12)}
(2x^2-1)=
\frac{2^\lambda\Gamma(\lambda+n+1)}
{n!(1-x^2)^{\frac{\lambda}{2}}}
{\sf P}_{2n+\lambda}^{-\lambda}(x).
\end{equation}
\end{cor}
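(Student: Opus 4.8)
The plan is to deduce this from the Corollary \eqref{FerPnlammlan} together with the quadratic reduction \eqref{Geg2nJac} of an even‑index Gegenbauer polynomial to a Jacobi polynomial in $2x^2-1$; this is exactly the ``quadratic transformation'' route announced just before the statement.

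\emph{Step 1.} In the first equality of \eqref{FerPnlammlan} replace $n$ by $2n$ and insert the closed form ${\sf P}_{\lambda}^{-\lambda}(x)=(1-x^2)^{\lambda/2}/(2^\lambda\Gamma(\lambda+1))$ (the same evaluation \cite[(14.5.18)]{NIST:DLMF} already used in the proof of that corollary), obtaining
\[
{\sf P}_{2n+\lambda}^{-\lambda}(x)=\frac{(2n)!}{(2\lambda+1)_{2n}}\,\frac{(1-x^2)^{\lambda/2}}{2^\lambda\Gamma(\lambda+1)}\,C_{2n}^{\lambda+\frac12}(x).
\]
Equivalently, substituting $\mu=\lambda$, $\nu=2n+\lambda$ straight into \eqref{FerPnummu} displays a ${}_2F_1$ whose numerator parameters $-n$ and $\tfrac12-n$ differ by $\tfrac12$, which is the usual trigger for a quadratic transformation folding it into a ${}_2F_1$ of argument $1-x^2$.

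\emph{Step 2.} Apply \eqref{Geg2nJac}, with $\lambda$ replaced by $\lambda+\tfrac12$, to rewrite $C_{2n}^{\lambda+\frac12}(x)$ as a constant multiple of the Jacobi polynomial evaluated at $2x^2-1$; this is the step where the quadratic transformation is actually invoked, and it feeds directly into the Jacobi definition \eqref{Jacobipolydef}.

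\emph{Step 3.} Collapse the resulting product of Pochhammer symbols and gamma functions. The only identities needed are the Legendre duplication identities $(2\lambda+1)_{2n}=4^n(\lambda+\tfrac12)_n(\lambda+1)_n$ and $(2n)!=4^n\,n!\,(\tfrac12)_n$, together with $\Gamma(\lambda+1)(\lambda+1)_n=\Gamma(\lambda+n+1)$. After cancellation the prefactor relating the two sides is exactly $2^\lambda\Gamma(\lambda+n+1)/(n!\,(1-x^2)^{\lambda/2})$, which is the claimed formula. There is no genuine obstacle here: the ``hard part'' is purely keeping the duplication‑formula bookkeeping straight (and checking that the superscript of the Jacobi polynomial produced by \eqref{Geg2nJac} is the one in the statement). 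The range restrictions $\lambda+1,\ 2\lambda+1\notin-\mathbb N_0$ inherited from \eqref{FerPnlammlan} and \eqref{Geg2nJac} can be removed afterwards by analytic continuation in $\lambda$, since both sides are meromorphic in that parameter.
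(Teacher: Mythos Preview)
Your approach is essentially the paper's: both combine the quadratic reduction \eqref{Geg2nJac} with the Gegenbauer--Ferrers relation (the paper cites \eqref{CnmuFerP} and then shifts $\lambda\mapsto\lambda+\tfrac12$; you use the already-shifted form \eqref{FerPnlammlan}), so there is no genuine methodological difference.

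There is, however, a point where your caveat in Step~3 (``checking that the superscript of the Jacobi polynomial produced by \eqref{Geg2nJac} is the one in the statement'') actually fails. Applying \eqref{Geg2nJac} with $\lambda$ replaced by $\lambda+\tfrac12$ gives
\[
C_{2n}^{\lambda+\frac12}(x)=\frac{(\lambda+\tfrac12)_n}{(\tfrac12)_n}\,P_n^{(\lambda,-\frac12)}(2x^2-1),
\]
so the Jacobi polynomial that emerges carries superscripts $(\lambda,-\tfrac12)$, not $(\lambda-\tfrac12,-\tfrac12)$. Running either your simplification or the paper's therefore proves
\[
P_n^{(\lambda,-\frac12)}(2x^2-1)=\frac{2^\lambda\Gamma(\lambda+n+1)}{n!\,(1-x^2)^{\lambda/2}}\,{\sf P}_{2n+\lambda}^{-\lambda}(x),
\]
which is also the natural companion of the subsequent corollary (with superscripts $(\lambda,\tfrac12)$). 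A direct check at $n=1$ confirms that the printed version with $(\lambda-\tfrac12,-\tfrac12)$ is off: the left-hand side equals $(\lambda+1)x^2-\tfrac12$ while the right-hand side equals $(\lambda+\tfrac32)x^2-\tfrac12$. So your argument is correct as a proof strategy, but it does not establish the formula exactly as stated; it establishes the corrected version above.
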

}
\boro{
\begin{proof}
Start with \eqref{Geg2nJac}
then inserting \eqref{CnmuFerP}, 
and setting $\lambda\mapsto
\lambda+\frac12$ 
and after simplification
completes the proof.
\end{proof}
}

\boro{
\begin{cor}
Let $n\in \mathbb N_0$, $\lambda\in\mathbb C$. Then
\begin{equation}
P_n^{(\lambda,\frac12)}(2x^2-1)=
\frac{2^{\lambda}\Gamma(\lambda+n+1)}{n!x(1-x^2)^{\frac
{\lambda}{2}}}{\sf P}_{2n+\lambda+1}^{-\lambda}(x).
\end{equation}
\end{cor}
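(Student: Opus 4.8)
The plan is to follow exactly the pattern of the preceding corollary, but starting from the odd-index quadratic reduction \eqref{Geg2npJac} rather than from \eqref{Geg2nJac}. First I would replace $\lambda$ by $\lambda+\tfrac12$ in \eqref{Geg2npJac}, obtaining
\[
C_{2n+1}^{\lambda+\frac12}(x)=\frac{(\lambda+\frac12)_{n+1}}{(\frac12)_{n+1}}\,x\,P_n^{(\lambda,\frac12)}(2x^2-1).
\]
Next I would express the left-hand side through the Ferrers function of the first kind by applying \eqref{CnmuFerP} with $n\mapsto 2n+1$ and $\mu\mapsto\lambda+\tfrac12$. One checks that the upper parameter becomes $\tfrac12-\mu=-\lambda$, the degree becomes $2n+1+\mu-\tfrac12=2n+\lambda+1$, and the exponent of $1-x^2$ collapses to $\tfrac{\lambda}{2}$ since $\tfrac{\mu}{2}-\tfrac14=\tfrac{\lambda}{2}$, so that
\[
C_{2n+1}^{\lambda+\frac12}(x)=\frac{\sqrt{\pi}\,\Gamma(2\lambda+2n+2)}{2^{\lambda}\Gamma(\lambda+\frac12)\,(2n+1)!}\,\frac{{\sf P}_{2n+\lambda+1}^{-\lambda}(x)}{(1-x^2)^{\frac{\lambda}{2}}}.
\]
Equating the two expressions for $C_{2n+1}^{\lambda+\frac12}(x)$ and solving for $P_n^{(\lambda,\frac12)}(2x^2-1)$ yields the asserted identity up to a constant depending only on $n$ and $\lambda$.

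The only remaining task is then to verify that this constant, namely
\[
\frac{(\frac12)_{n+1}}{(\lambda+\frac12)_{n+1}}\cdot\frac{\sqrt{\pi}\,\Gamma(2\lambda+2n+2)}{2^{\lambda}\Gamma(\lambda+\frac12)\,(2n+1)!},
\]
reduces to $2^{\lambda}\Gamma(\lambda+n+1)/n!$. I would rewrite each Pochhammer symbol as a ratio of gamma functions and apply the Legendre duplication formula to $\Gamma(2\lambda+2n+2)$ (with half-argument $\lambda+n+1$) and to $(2n+1)!=\Gamma(2n+2)$ (with half-argument $n+1$); the factors $\Gamma(n+\tfrac32)$, $\Gamma(\lambda+\tfrac12)$, $\Gamma(\lambda+n+\tfrac32)$ together with the stray powers of $\sqrt{\pi}$ and of $2$ then cancel, leaving precisely $2^{\lambda}\Gamma(\lambda+n+1)/n!$. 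I would also remark that the factor $x$ in the denominator causes no difficulty at $x=0$: by \eqref{FerPnummu} the function ${\sf P}_{2n+\lambda+1}^{-\lambda}(x)$ carries a factor $x^{2n+1}$, so the right-hand side is in fact $(1-x^2)^{-\lambda/2}$ times a polynomial in $x^2$, as it must be.

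I expect the gamma-function bookkeeping in the second paragraph to be the only point demanding any care, and even there the cancellations are forced once the duplication formula is invoked at the two indicated places, so no genuine obstacle is anticipated; as in the earlier corollaries, the passage from the terminating-series setting to general $\lambda\in\mathbb C$ is justified by analytic continuation in $\lambda$.
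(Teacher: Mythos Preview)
Your proposal is correct and follows essentially the same route as the paper's proof: start from \eqref{Geg2npJac}, shift $\lambda\mapsto\lambda+\tfrac12$, convert the Gegenbauer polynomial to a Ferrers function via \eqref{CnmuFerP}, and simplify the resulting constant. The paper records this in one line (``after simplification''), while you spell out the gamma-function bookkeeping with the duplication formula and add the harmless observation about the removable factor of $x$; none of this departs from the paper's argument.
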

}

\boro{
\begin{proof}
Start with
\eqref{Geg2npJac}
then inserting \eqref{CnmuFerP}, 
and setting
$\lambda\mapsto\lambda+\frac12$, and after simplification 
completes the proof.
\end{proof}
}

\boro{
Below, the following related functions will be used.
The associated Legendre function of
the first kind 
$P_\nu^\mu:\mathbb C\setminus(-\infty,1]\to\mathbb C$
is defined as \cite[(14.3.6)]{NIST:DLMF}
\begin{equation}
P_\nu^{-\mu}(z)=
\frac{(z^2-1)^{\frac{\mu}{2}}}
{2^\mu\Gamma(\mu+1)}
\hyp21{\nu+\mu+1,-\nu+\mu}{1+\mu}
{\frac{1-z}{2}}.
\label{EulerLegP}
\end{equation}
}

\noindent\boro{Note that the Legendre polynomials 
are given by \cite[(14.7.1), (18.7.9)]{NIST:DLMF}
\begin{equation}
P_n(x)={\sf P}_n^{0}(x)=P_n^0(x)=C_n^{\frac12}(x).
\label{Legpoly}
\end{equation}
}
\boro{
\begin{rem}
The Chebyshev polynomials
of the first kind \cite[(9.8.35)]{Koekoeketal} can be given by
\begin{equation}
\label{Cheby}
T_n(\cos\theta):=\cos(n\theta)
=\sqrt{\frac{\pi\sin\theta}{2}}
{\sf P}_{n-\frac12}^\frac12(\cos\theta)
=n\sqrt{
\frac{2\sin\theta}{\pi}}
{\sf Q}_{n-\frac12}^{-\frac12}(\cos\theta),
\end{equation}
and the Chebyshev polynomials of the second kind
\cite[(9.8.36)]{Koekoeketal} can be given by
\begin{equation}
\label{ChebyU}
U_n(\cos\theta):=\frac{\sin((n+1)\theta)}{\sin\theta}=
(n+1)\sqrt{\frac{\pi}{2\sin\theta}}
{\sf P}_{n+\frac12}^{-\frac12}(\cos\theta)=
-\sqrt{
\frac{2}{\pi\sin\theta}}
{\sf Q}_{n+\frac12}^{\frac12}(\cos\theta),
\end{equation}
where we have used \eqref{FerPhalf}, \eqref{FerPmhalf}, 
\eqref{Qhalf}, \eqref{Qmhalf}.
See Remark \ref{trigorth} below with
a concrete orthogonality example where the prefactor
$n$ cancels for the Chebyshev polynomials 
of the first kind, even for $n=0$.
\end{rem}
}

\boro{
The associated Legendre function of the second
kind $\biQ_\nu^\mu:\mathbb C\setminus(-\infty,1]\to\mathbb C$
can be defined in terms of the Gauss hypergeometric
function 
as \cite[
(14.3.10) and Section 14.21]{NIST:DLMF}
\begin{equation}
\biQ_\nu^\mu(z):=\frac{\sqrt{\pi}
(z^2-1)^{\mu/2}}
{2^{\nu+1}\Gamma(\nu+\frac32)z^{\nu+\mu+1}}
\hyp21{\frac{\nu+\mu+1}{2},\frac{\nu+\mu+2}{2}}{
\nu+\frac32}{\frac{1}{z^2}},
\label{Qdefn}
\end{equation}
for $|z|>1$ and, by analytic continuation of the Gauss hypergeometric
function, elsewhere on $z\in\mathbb C\setminus(-\infty,1]$.
The normalized notation $\biQ_\nu^\mu(z)$
is due to Olver \cite[p.~178]{Olver}
and is defined in terms of the more
commonly appearing Hobson notation for
the associated Legendre function of the
second kind $Q_\nu^\mu(z)$ as
\cite[(14.3.10)]{NIST:DLMF}
\[
Q_\nu^\mu(z)=:
\expe^{i\pi\mu}\Gamma(\nu+\mu+1)\biQ_\nu^\mu(z).
\]
}

\boro{
The Meixner polynomials are defined by
\cite[(9.10.1)]{Koekoeketal}
\[
M_n(x;\beta,c):=\hyp21{-n,-x}{\beta}{1-\frac{1}{c}},
\]
where $n\in\mathbb N_0$, $c\in \mathbb C
\setminus\{0,1\}$.
}
\noindent \boro{
Specialized Meixner polynomials are related to the 
Ferrers function of the first kind.
}
\boro{
\begin{cor}
Let $n\in{\mathbb N}_0$, $\lambda\in{\mathbb C}$, $x\in(-1,1)$. Then
\begin{equation}\label{jacmeixrel}
{\sf P}_{n+\lambda}^{-\lambda}(x)=
\frac{(1-x^2)^{\frac{\lambda}{2}}}{2^\lambda\Gamma(\lambda+1)}
M_n\left(-2\lambda-n-1;\lambda+1,\frac{2}{1+x}\right).
\end{equation}
\end{cor}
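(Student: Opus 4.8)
The plan is to observe that, once the elementary algebraic prefactors are stripped away, both sides of \eqref{jacmeixrel} reduce to one and the same terminating Gauss hypergeometric polynomial in the variable $\tfrac{1-x}{2}$, so that the identity is really a matching of parameters. I would begin from the second form of the preceding Corollary, \eqref{FerPnlammlan}, namely ${\sf P}_{n+\lambda}^{-\lambda}(x)=\tfrac{n!}{(2\lambda+1)_n}\,{\sf P}_{\lambda}^{-\lambda}(x)\,C_n^{\lambda+\frac12}(x)$, together with the closed form ${\sf P}_{\lambda}^{-\lambda}(x)=(1-x^2)^{\lambda/2}/(2^\lambda\Gamma(\lambda+1))$ from \cite[(14.5.18)]{NIST:DLMF} that was already invoked in that proof. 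This immediately accounts for the prefactor $(1-x^2)^{\lambda/2}/(2^\lambda\Gamma(\lambda+1))$ appearing on the right of \eqref{jacmeixrel}.

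Next I would expand the Gegenbauer polynomial using the definition of the Gegenbauer function \eqref{Gegenbauerfuncdef} with $\alpha=n$ and $\lambda\mapsto\lambda+\tfrac12$, which gives $C_n^{\lambda+\frac12}(x)=\tfrac{(2\lambda+1)_n}{n!}\,\hyp21{-n,2\lambda+n+1}{\lambda+1}{\frac{1-x}{2}}$. The combinatorial factor $n!/(2\lambda+1)_n$ then cancels against the normalization in \eqref{FerPnlammlan}, leaving ${\sf P}_{n+\lambda}^{-\lambda}(x)$ equal to $(1-x^2)^{\lambda/2}/(2^\lambda\Gamma(\lambda+1))$ times $\hyp21{-n,2\lambda+n+1}{\lambda+1}{\frac{1-x}{2}}$.

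To finish, I would compare this $\hyp21{}{}{}$ with the definition of the Meixner polynomials $M_n(x;\beta,c)=\hyp21{-n,-x}{\beta}{1-\frac1c}$ from \cite[(9.10.1)]{Koekoeketal}: the choice $\beta=\lambda+1$ and $c=\tfrac{2}{1+x}$ makes $1-\tfrac1c=\tfrac{1-x}{2}$ and $-x=2\lambda+n+1$, i.e. the Meixner "degree-like" argument equals $-2\lambda-n-1$, so the two series are literally the same. I would also record that $c=\tfrac{2}{1+x}\in(1,\infty)\subset\mathbb C\setminus\{0,1\}$ for $x\in(-1,1)$, so the Meixner polynomial on the right-hand side is well defined; combining the two displays then yields \eqref{jacmeixrel}.

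I do not expect any real obstacle: this proof is pure bookkeeping. The only points requiring a moment's care are checking that the Pochhammer normalization of the Gegenbauer function in \eqref{Gegenbauerfuncdef} exactly cancels the prefactor $n!/(2\lambda+1)_n$ of \eqref{FerPnlammlan}, and verifying that substituting $-2\lambda-n-1$ into the Meixner variable reproduces precisely the numerator parameter $2\lambda+n+1$. As an independent cross-check I would note an equally short alternative route that starts from the defining representation \eqref{FerPdefn} with $\nu=n+\lambda$, $\mu=\lambda$ and applies the Euler transformation $\hyp21{a,b}{c}{z}=(1-z)^{c-a-b}\hyp21{c-a,c-b}{c}{z}$ with $z=\tfrac{1-x}{2}$: since here $c-a-b=\lambda$, this converts the $\bigl(\tfrac{1-x}{1+x}\bigr)^{\lambda/2}$ prefactor into $(1-x^2)^{\lambda/2}/2^{\lambda}$ and the hypergeometric series into exactly the Meixner form above.
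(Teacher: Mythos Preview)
Your argument is correct: after invoking \eqref{FerPnlammlan} and the hypergeometric definition \eqref{Gegenbauerfuncdef} of $C_n^{\lambda+\frac12}$, both sides of \eqref{jacmeixrel} become the same terminating ${}_2F_1$ with the prefactor $(1-x^2)^{\lambda/2}/(2^\lambda\Gamma(\lambda+1))$, and the Meixner definition $M_n(x;\beta,c)={}_2F_1(-n,-x;\beta;1-1/c)$ matches under $\beta=\lambda+1$, $c=2/(1+x)$, Meixner argument $-2\lambda-n-1$. The alternative route via \eqref{FerPdefn} and Euler's transformation is also correct.

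The paper takes a different path: it starts from the Meixner--Jacobi identity $M_n(x;\beta,c)=\tfrac{n!}{(\beta)_n}P_n^{(\beta-1,-\beta-n-x)}\!\bigl(\tfrac{2-c}{c}\bigr)$ from \cite[p.~236]{Koekoeketal}, specializes $x=1-n-2\beta$ so that the Jacobi polynomial becomes symmetric, converts to Gegenbauer via \eqref{GegJac}, and only then passes to the Ferrers function through \eqref{CnmuFerP}. Your approach is more self-contained, since it uses only the hypergeometric definitions of $C_n^\mu$ and $M_n$ already recorded in the paper together with the preceding corollary \eqref{FerPnlammlan}; the paper's route, by contrast, exhibits \eqref{jacmeixrel} as a consequence of a structural Meixner--Jacobi connection, which places the identity in a broader family of limit and specialization relations in the Askey scheme.
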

}
\boro{
\begin{proof}
Start with the relation between the Meixner polynomials 
and the Jacobi polynomials \cite[p.~236]{Koekoeketal}
\begin{equation*}
M_n(x;\beta,c)=\frac{n!}{(\beta)_n}
P_n^{(\beta-1,-\beta-n-x)}\left(\frac{2-c}{c}\right),
\end{equation*}
and let $x=1-n-2\beta$. This converts the Jacobi 
polynomial to symmetric form and then using
\eqref{GegJac}, one obtains
\[
C_n^\lambda(\omega)=\frac{(2\lambda)_n}
{n!}M_n\left(-2\lambda-n;\lambda+\tfrac12,\frac{2}{1+\omega}\right).
\]
Finally, using \eqref{CnmuFerP}, completes the proof.
\end{proof}
}
\subsection{\boro{Rodrigues-type relations%
}}

\boro{
From the Gegenbauer polynomial Rodrigues-type relations, we can 
derive Rodrigues-type relations for the Ferrers function
of the first {\boro and second kinds}.
}

\boro{
\begin{cor}
Let $x\in(-1,1)$, $\mu\in\mathbb C$, 
$n\in \mathbb N_0$. 
Then,
\begin{equation}
\label{RodPFer}
{\sf P}_{n+\mu}^{-\mu}(x)
=\frac{(-1)^n}{2^{\mu+n}\Gamma(\mu+n+1)(1-x^2)^{\frac{\mu}{2}}}
\left(\frac{\mathrm d}{{\mathrm d}x}\right)^n
\left[\big(1-x^2\big)^{\mu+n}\right].
\end{equation}
\end{cor}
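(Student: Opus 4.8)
The plan is to start from the well-known Rodrigues-type formula for the Gegenbauer polynomials and transport it to the Ferrers function via the Corollary relation \eqref{FerPnlammlan}, which already establishes
\[
{\sf P}_{n+\mu}^{-\mu}(x)=\frac{n!}{(2\mu+1)_n}\,{\sf P}_{\mu}^{-\mu}(x)\,C_n^{\mu+\frac12}(x),
\qquad
{\sf P}_{\mu}^{-\mu}(x)=\frac{(1-x^2)^{\mu/2}}{2^\mu\Gamma(\mu+1)}.
\]
First I would recall the classical Rodrigues formula for Gegenbauer polynomials, namely \cite[Table 18.5.1]{NIST:DLMF} (equivalently a confluent limit of the Jacobi Rodrigues formula), which in the symmetric-weight normalization reads
\[
C_n^{\nu}(x)=\frac{(-1)^n(2\nu)_n}{2^n n!\,(\nu+\frac12)_n}\,(1-x^2)^{\frac12-\nu}\left(\frac{\mathrm d}{\mathrm dx}\right)^n\left[(1-x^2)^{n+\nu-\frac12}\right].
\]
Substituting $\nu=\mu+\frac12$ gives $(1-x^2)^{\frac12-\nu}=(1-x^2)^{-\mu}$ and $(n+\nu-\frac12)=n+\mu$, so that
\[
C_n^{\mu+\frac12}(x)=\frac{(-1)^n(2\mu+1)_n}{2^n n!\,(\mu+1)_n}\,(1-x^2)^{-\mu}\left(\frac{\mathrm d}{\mathrm dx}\right)^n\left[(1-x^2)^{n+\mu}\right].
\]

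Next I would simply insert this into \eqref{FerPnlammlan}. The factor $(2\mu+1)_n$ from the Gegenbauer Rodrigues formula cancels exactly against the $(2\mu+1)_n$ in the denominator of \eqref{FerPnlammlan}, and the remaining constants combine: from $\frac{n!}{(2\mu+1)_n}\cdot{\sf P}_\mu^{-\mu}(x)\cdot\frac{(-1)^n(2\mu+1)_n}{2^n n!\,(\mu+1)_n}$ one is left with $\frac{(-1)^n}{2^n(\mu+1)_n}\,{\sf P}_\mu^{-\mu}(x)$. Writing ${\sf P}_\mu^{-\mu}(x)=\frac{(1-x^2)^{\mu/2}}{2^\mu\Gamma(\mu+1)}$ and using $(\mu+1)_n\Gamma(\mu+1)=\Gamma(\mu+n+1)$, together with the cancellation $(1-x^2)^{\mu/2}\cdot(1-x^2)^{-\mu}=(1-x^2)^{-\mu/2}$, the prefactor collapses to
\[
\frac{(-1)^n}{2^{\mu+n}\Gamma(\mu+n+1)(1-x^2)^{\mu/2}},
\]
which is precisely the constant appearing in \eqref{RodPFer}, and the differential operator part matches verbatim.

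There is essentially no serious obstacle here; the proof is a bookkeeping exercise in constants. The one point requiring a little care is the choice of normalization/sign conventions in the Gegenbauer Rodrigues formula — different references place the $(-1)^n$ and the powers of $2$ differently, and one must make sure that the version used is consistent with the definition \eqref{Gegenbauerfuncdef} adopted in this paper. I would verify the constant by checking the leading coefficient (the coefficient of $x^n$ in $C_n^{\mu+\frac12}$) against what \eqref{Gegenbauerfuncdef} predicts, or alternatively by evaluating both sides of \eqref{RodPFer} at a convenient value such as the $n=1$ case, which pins down all the constants unambiguously. Once the normalization is fixed, the identity follows immediately by substitution and simplification. (One should also note that the restriction $2\mu+1\notin-\mathbb N_0$ implicit in \eqref{FerPnlammlan} is harmless: both sides of \eqref{RodPFer} are analytic in $\mu$ away from the poles of $\Gamma(\mu+n+1)$, so the identity extends by continuity to all $\mu\in\mathbb C$ for which the right-hand side is defined.)
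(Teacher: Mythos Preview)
Your proposal is correct and follows essentially the same route as the paper: the paper starts from \eqref{relFerPGeg} with $\nu\mapsto\mu+n$ and inserts the Gegenbauer Rodrigues formula, which is exactly what you do via the specialization \eqref{FerPnlammlan}. The bookkeeping of constants you outline is accurate, and your remark about extending in $\mu$ by analyticity is a nice touch the paper omits.
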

}

\boro{
\begin{proof}
Start with \eqref{relFerPGeg}, setting $\nu\to \mu+n$ and by 
using the Rodrigues-type formula for the Gegenbauer polynomials 
\cite[(18.5.5)]{NIST:DLMF}
\[
C_n^\mu(x)=\dfrac{(-1)^n(2\mu)_n}{(\mu+\frac 12)_n 2^n 
n!(1-x^2)^{\mu-\frac 12}}\left(\frac{\mathrm d}
{{\mathrm d}x}\right)^n\left[\big(1-x^2\big)^{\mu+n-\frac 12}\right],
\]
and after straightforward calculation completes the proof.
\end{proof}
}

\noindent 
\boro{Similarly we have a Rodrigues-type relation for the 
Ferrers function of the second kind.}

\boro{
\begin{cor}
Let $x\in(-1,1)$, $\mu\in\mathbb C$, 
$n\in \mathbb N_0$. 
Then,
\begin{equation}
{\sf Q}_{k+n-\frac12}^{n-\frac12}(x)
=\frac{\pi(-1)^{n+k}(2n+k-1)!}
{2^{k+n+\frac12}k!\Gamma(k+n+\frac12)\big(1-x^2
\big)^{\frac{n}{2}-\frac14}}
\left(\frac{\mathrm d}{{\mathrm d}x}\right)^k
\left[\big(1-x^2\big)^{k+n-\frac12}\right].
\end{equation}
\end{cor}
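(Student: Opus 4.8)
The plan is to deduce this Rodrigues-type formula for ${\sf Q}_{k+n-\frac12}^{n-\frac12}$ directly from the Rodrigues-type formula \eqref{RodPFer} for the Ferrers function of the first kind, which has just been established in the previous corollary, using the connection relation \eqref{FerPFerQ} that expresses ${\sf P}_{k+n-\frac12}^{\frac12-n}$ in terms of ${\sf Q}_{k+n-\frac12}^{n-\frac12}$. So the bridge I need is to rewrite the left-hand side of the target identity in terms of a Ferrers function of the first kind to which \eqref{RodPFer} applies.

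First I would invert \eqref{FerPFerQ}, which reads
\[
{\sf P}_{k+n-\frac12}^{\frac12-n}(x)
=\frac{2(-1)^nk!}{\pi(2n+k-1)!}
{\sf Q}_{k+n-\frac12}^{n-\frac12}(x),
\]
to solve for ${\sf Q}_{k+n-\frac12}^{n-\frac12}(x)$, giving
\[
{\sf Q}_{k+n-\frac12}^{n-\frac12}(x)
=\frac{\pi(2n+k-1)!}{2(-1)^nk!}\,{\sf P}_{k+n-\frac12}^{\frac12-n}(x).
\]
Next I would apply \eqref{RodPFer} with the parameter substitution $\mu\mapsto -\,n+\tfrac12$ (equivalently $\mu = \tfrac12 - n$) and with the index $n$ in \eqref{RodPFer} replaced by $k$: since the degree parameter there is $n+\mu$, here it becomes $k + (\tfrac12 - n)$, which matches $k+n-\tfrac12$ only if one is careful — in fact $k + \tfrac12 - n \neq k + n - \tfrac12$ unless $n = \tfrac12$, so the correct reading is that \eqref{RodPFer} has the form ${\sf P}_{m+\mu}^{-\mu}$, and I want $-\mu = \tfrac12 - n$, i.e. $\mu = n - \tfrac12$, with $m+\mu = k+n-\tfrac12$, i.e. $m = k$. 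Thus \eqref{RodPFer} with $\mu = n-\tfrac12$ and its running index equal to $k$ yields
\[
{\sf P}_{k+n-\frac12}^{\frac12-n}(x)
=\frac{(-1)^k}{2^{\,k+n-\frac12}\,\Gamma(k+n+\frac12)\,(1-x^2)^{\frac{n}{2}-\frac14}}
\left(\frac{\mathrm d}{{\mathrm d}x}\right)^k
\left[\big(1-x^2\big)^{k+n-\frac12}\right].
\]
Substituting this into the inverted \eqref{FerPFerQ} and collecting the constants — the factor $\tfrac{\pi(2n+k-1)!}{2(-1)^n k!}$ times $\tfrac{(-1)^k}{2^{k+n-1/2}\Gamma(k+n+1/2)}$ combines to $\tfrac{\pi(-1)^{n+k}(2n+k-1)!}{2^{k+n+1/2}k!\,\Gamma(k+n+\frac12)}$, using $(-1)^{-n} = (-1)^n$ and $2\cdot 2^{k+n-1/2} = 2^{k+n+1/2}$ — produces exactly the claimed right-hand side.

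The main obstacle is purely bookkeeping: making sure the parameter identification in \eqref{RodPFer} is done consistently (which of the two integer labels $n,k$ plays the role of the running index, and which sign of $\mu$ is needed so that the superscript comes out as $n-\tfrac12$ rather than $\tfrac12-n$), and then tracking the powers of $2$, the sign $(-1)^{\,\bullet}$, and the $\Gamma$ versus factorial factors through the algebra. There is no analytic difficulty — both \eqref{RodPFer} and \eqref{FerPFerQ} are already available — so the proof is a short substitution-and-simplification argument, which is why I would present it tersely.

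\begin{proof}
Inverting the connection relation \eqref{FerPFerQ} gives
\[
{\sf Q}_{k+n-\frac12}^{n-\frac12}(x)
=\frac{\pi(-1)^n(2n+k-1)!}{2\,k!}\,{\sf P}_{k+n-\frac12}^{\frac12-n}(x).
\]
Now apply the Rodrigues-type relation \eqref{RodPFer} with $\mu=n-\tfrac12$ and with its running index taken to be $k$, so that the degree parameter $k+\mu$ equals $k+n-\tfrac12$ and the superscript $-\mu$ equals $\tfrac12-n$:
\[
{\sf P}_{k+n-\frac12}^{\frac12-n}(x)
=\frac{(-1)^k}{2^{\,k+n-\frac12}\,\Gamma(k+n+\frac12)\,(1-x^2)^{\frac{n}{2}-\frac14}}
\left(\frac{\mathrm d}{{\mathrm d}x}\right)^k
\left[\big(1-x^2\big)^{k+n-\frac12}\right].
\]
Substituting this into the previous display and combining constants, namely $\tfrac{\pi(-1)^n(2n+k-1)!}{2\,k!}\cdot\tfrac{(-1)^k}{2^{\,k+n-\frac12}\,\Gamma(k+n+\frac12)}=\tfrac{\pi(-1)^{n+k}(2n+k-1)!}{2^{\,k+n+\frac12}\,k!\,\Gamma(k+n+\frac12)}$, yields the asserted formula. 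This completes the proof.
\end{proof}
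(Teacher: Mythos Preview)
Your proof is correct and follows essentially the same route as the paper: the paper's proof reads ``Start with \eqref{RodPFer}, let $n\mapsto k$, $\mu\mapsto n-\tfrac12$, then use \eqref{FerPFerQ}, and after simplification, this completes the proof,'' which is exactly your argument with the two ingredients applied in the reverse order.
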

}
\boro{
\begin{proof}
Start with \eqref{RodPFer}, let $n\mapsto k$, 
$\mu\mapsto{n-\frac12}$ then use \eqref{FerPFerQ}, and 
after simplification, this completes 
the proof.
\end{proof}
}
\section{\boro{Orthogonality relations}}
In this section, we derive several new forms of orthogonality for 
Ferrers functions of the first and second kinds
\boro{
which are equivalent to the orthogonality of Gegenbauer 
(ultraspherical) polynomials. Due to this equivalence, 
it is surprising that these orthogonality relations 
have \boro{not} been noticed previously}. 
We will use these orthogonality conditions to verify 
our derived eigenfunctions for the Laplace-Beltrami 
operator on the $d$-dimensional $R$-radius hypersphere 
$\mathbb S_R^d$.
For detailed information about the special functions 
we use below, namely the gamma function
$\Gamma:\mathbb C\setminus- \mathbb N_0\to\mathbb C$, Ferrers 
functions ${\sf P}_\nu^\mu$, 
${\sf Q}_\nu^\mu$%
, and the Gegenbauer polynomials $C_n^\mu$,
and their properties, 
see \cite[Chapters 5, 14, 18]{NIST:DLMF}.
\subsection{Continuous orthogonality from Gegenbauer polynomials}
\begin{thm} \label{mainorthogthm}
\noindent Let $\mu\in\mathbb C$, 
\boro{$k,k'\in\mathbb N$, $\mu+k+\frac12\ne 0$, $\Re\mu>-1$.}
Then,
\boro{
\begin{equation} \label{orthogonalityPmk}
\int_{-1}^{1}{\sf P}_{k+\mu}^{-\mu}(x){\sf P}_{k'+\mu}^{-\mu}
(x)\,{\mathrm d}x=\dfrac{k!}{\Gamma(2\mu+k+1)
(\mu+k+\frac12)}\,\delta_{k,k'}. 
\end{equation}
}
\end{thm}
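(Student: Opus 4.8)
The plan is to reduce the claimed orthogonality to the classical orthogonality of Gegenbauer polynomials via the relation \eqref{FerPnlammlan} in the Corollary above. Indeed, taking $\lambda=\mu$ there gives
\[
{\sf P}_{k+\mu}^{-\mu}(x)=\frac{k!}{(2\mu+1)_k}\,{\sf P}_{\mu}^{-\mu}(x)\,C_k^{\mu+\frac12}(x)
=\frac{k!}{(2\mu+1)_k}\,\frac{(1-x^2)^{\frac{\mu}{2}}}{2^\mu\Gamma(\mu+1)}\,C_k^{\mu+\frac12}(x),
\]
where I have used ${\sf P}_{\mu}^{-\mu}(x)=(1-x^2)^{\mu/2}/(2^\mu\Gamma(\mu+1))$ from \cite[(14.5.18)]{NIST:DLMF}. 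Substituting this for both factors in the integral turns $\int_{-1}^1 {\sf P}_{k+\mu}^{-\mu}{\sf P}_{k'+\mu}^{-\mu}\,\mathrm dx$ into a constant times
\[
\int_{-1}^{1}(1-x^2)^{\mu}\,C_k^{\mu+\frac12}(x)\,C_{k'}^{\mu+\frac12}(x)\,\mathrm dx,
\]
which is exactly the weighted $L^2$ inner product against the Gegenbauer weight $(1-x^2)^{(\mu+\frac12)-\frac12}$ with parameter $\lambda=\mu+\tfrac12$. The convergence condition for that integral is $\Re(\mu+\frac12)>-\frac12$, i.e. $\Re\mu>-1$, which matches the hypothesis; the parameter restriction $\mu+k+\frac12\neq 0$ is what keeps the normalization denominator nonzero.

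Next I would quote the standard Gegenbauer orthogonality relation, e.g. \cite[(18.3), Table 18.3.1]{NIST:DLMF}:
\[
\int_{-1}^{1}(1-x^2)^{\lambda-\frac12}C_k^{\lambda}(x)C_{k'}^{\lambda}(x)\,\mathrm dx
=\frac{\pi\,2^{1-2\lambda}\,\Gamma(k+2\lambda)}{k!\,(k+\lambda)\,\Gamma(\lambda)^2}\,\delta_{k,k'},
\]
specialize $\lambda=\mu+\tfrac12$, and assemble the prefactors. Collecting the constant from \eqref{FerPnlammlan} gives an overall factor
\[
\left(\frac{k!}{(2\mu+1)_k}\right)^2\frac{1}{2^{2\mu}\Gamma(\mu+1)^2},
\]
and multiplying by the Gegenbauer norm with $\lambda=\mu+\frac12$, namely $\dfrac{\pi\,2^{-2\mu}\Gamma(k+2\mu+1)}{k!\,(k+\mu+\frac12)\,\Gamma(\mu+\frac12)^2}$, should collapse — using $(2\mu+1)_k=\Gamma(2\mu+1+k)/\Gamma(2\mu+1)$ and the duplication formula $\Gamma(2\mu+1)=2^{2\mu}\Gamma(\mu+\frac12)\Gamma(\mu+1)/\sqrt{\pi}$ — to the asserted $\dfrac{k!}{\Gamma(2\mu+k+1)(\mu+k+\frac12)}$.

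The routine part is the gamma-function bookkeeping; the one genuine subtlety is justifying the computation for all complex $\mu$ with $\Re\mu>-1$ rather than just real $\mu>-\frac12$: the Gegenbauer orthogonality integral and its evaluation extend by analytic continuation in $\lambda$ as long as the integral converges, so I would remark that both sides of \eqref{orthogonalityPmk} are analytic in $\mu$ on $\Re\mu>-1$ (away from the pole locus $\mu+k+\frac12=0$) and agree on the real segment, hence everywhere on the strip by the identity theorem. This also covers the case $k=k'$ versus $k\neq k'$ uniformly, since the $\delta_{k,k'}$ structure is inherited directly from the Gegenbauer relation. I would close by noting the $N$-dependent special cases of the Theorems \eqref{Sob1PP}–\eqref{Sob2P} are not needed here; only \eqref{FerPnlammlan} and the classical weight-$(1-x^2)^{\lambda-\frac12}$ orthogonality enter.
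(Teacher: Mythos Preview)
Your proposal is correct and follows essentially the same route as the paper: both reduce \eqref{orthogonalityPmk} to the classical Gegenbauer orthogonality \eqref{orthogGeg} via the identification of ${\sf P}_{k+\mu}^{-\mu}(x)$ with $(1-x^2)^{\mu/2}C_k^{\mu+1/2}(x)$ up to a constant. The only cosmetic difference is that the paper cites \eqref{relFerPGeg} (i.e.\ \cite[(14.3.21)]{NIST:DLMF}) directly with $\nu=k+\mu$, whereas you invoke the equivalent corollary \eqref{FerPnlammlan}; your added remark on analytic continuation in $\mu$ is a welcome detail the paper omits.
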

\begin{proof}
Combining \cite[(14.3.21)]{NIST:DLMF}
\eqref{relFerPGeg}
and orthogonality for the Gegenbauer polynomials 
\cite[Sections 18.2-3]{NIST:DLMF}
\begin{equation}
\int_{-1}^1 C_n^\mu(x) C_{n'}^\mu(x) (1-x^2)^{\mu-\frac12}
\,{\mathrm d}x=\dfrac{\pi\Gamma(2\mu+n)}{2^{2\mu
-1}(\mu+n)n!\Gamma^2(\mu)}\, \delta_{n,n'},
\label{orthogGeg}
\end{equation}
where $n,n'\in \mathbb N_0$, 
$\mu\in(-\frac12,\infty)\setminus\{0\}$,
with $\nu=k+\mu$ (resp.~$\nu=k'+\mu$)
produces \eqref{orthogonalityPmk}.
The restriction on 
$\Re\mu$ comes from ensuring that the singularities 
at $x=\pm 1$ are integrable. This completes the proof.
\end{proof}

\noindent 
Now specializing the above orthogonality relation using 
\boro{$\mu$} as either an integer or an odd-half-in\-te\-ger 
produces \boro{other} orthogonality relations as corollaries.

\boro{
\begin{cor} \label{corPPpos}
Let $n,k,k'\in \mathbb N_0$.
Then
\begin{equation}
\int_{-1}^{1}{\sf P}_{k+n}^{n}(x){\sf P}_{k'+n}^{n}(x)
\,{\mathrm d}x
=\dfrac{(2n+k)!}{k!(n+k+\frac12)}
\,\delta_{k,k'}.
\label{orthPtwo}
\end{equation}
\end{cor}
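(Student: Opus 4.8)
The plan is to specialize Theorem~\ref{mainorthogthm}, namely \eqref{orthogonalityPmk}, by setting $\mu = n$ with $n \in \mathbb{N}_0$. First I would check that this choice of $\mu$ is admissible for the hypotheses of the theorem: we need $\Re\mu > -1$, which holds since $n \geq 0$, and we need $\mu + k + \tfrac12 \neq 0$, which holds automatically because $n + k + \tfrac12$ is a positive half-integer. With $\mu = n$ the left-hand side of \eqref{orthogonalityPmk} becomes $\int_{-1}^1 {\sf P}_{k+n}^{-n}(x)\,{\sf P}_{k'+n}^{-n}(x)\,\mathrm{d}x$, and the right-hand side becomes $\dfrac{k!}{\Gamma(2n+k+1)(n+k+\tfrac12)}\,\delta_{k,k'} = \dfrac{k!}{(2n+k)!\,(n+k+\tfrac12)}\,\delta_{k,k'}$, since $2n+k+1$ is a positive integer.

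The remaining task is to convert the integrand from negative order $-n$ to positive order $n$. For this I would invoke the standard relation between Ferrers functions of positive and negative order (the analogue of \cite[(14.9.3) or (14.9.13)]{NIST:DLMF} specialized to integer order $\mu = n$ and degree $\nu = k+n$):
\[
{\sf P}_{k+n}^{n}(x) = (-1)^n \frac{\Gamma(k+2n+1)}{\Gamma(k+1)}\,{\sf P}_{k+n}^{-n}(x),
\]
since $\Gamma(\nu+\mu+1)/\Gamma(\nu-\mu+1) = \Gamma(k+2n+1)/\Gamma(k+1) = (2n+k)!/k!$ here. Substituting ${\sf P}_{k+n}^{-n}(x) = (-1)^n \frac{k!}{(2n+k)!}\,{\sf P}_{k+n}^{n}(x)$ (and likewise with $k'$) into the specialized \eqref{orthogonalityPmk}, the product of the two conversion factors is $\left(\frac{k!}{(2n+k)!}\right)^2$ on the diagonal $k=k'$, and multiplying through by its reciprocal gives
\[
\int_{-1}^{1}{\sf P}_{k+n}^{n}(x){\sf P}_{k'+n}^{n}(x)\,\mathrm{d}x = \left(\frac{(2n+k)!}{k!}\right)^2 \cdot \frac{k!}{(2n+k)!\,(n+k+\tfrac12)}\,\delta_{k,k'} = \frac{(2n+k)!}{k!\,(n+k+\tfrac12)}\,\delta_{k,k'},
\]
which is exactly \eqref{orthPtwo}. (On the off-diagonal both sides vanish, so the Kronecker delta is unaffected.)

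The only genuine subtlety — and the step I expect to be the main obstacle to a fully clean write-up — is justifying the order-flip identity at integer order, since the generic formula relating ${\sf P}_\nu^\mu$ and ${\sf P}_\nu^{-\mu}$ involves $\Gamma(\nu-\mu+1)$ in a denominator and one must be careful that no spurious pole or indeterminacy appears when $\mu = n \in \mathbb{N}_0$ and $\nu - \mu = k \geq 0$; here everything is finite because $\nu - \mu + 1 = k+1$ is a positive integer, so the ratio of gamma functions is a genuine (finite, nonzero) integer ratio and the relation is simply a polynomial identity in the hypergeometric representations \eqref{FerPdefn}. Alternatively one could avoid the order-flip entirely by re-deriving \eqref{orthPtwo} directly from Gegenbauer orthogonality \eqref{orthogGeg} via \eqref{CnmuFerP} with $\mu \mapsto n+\tfrac12$ (so that the weight $(1-x^2)^{\mu-\frac12}$ in \eqref{orthogGeg} combines with the $(1-x^2)$-power in \eqref{CnmuFerP} to produce ${\sf P}_{k+n}^{n}$ with no leftover weight) — this is essentially the same computation packaged differently, and I would present whichever route yields the shortest bookkeeping.
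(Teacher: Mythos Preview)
Your proposal is correct and follows essentially the same route as the paper: specialize \eqref{orthogonalityPmk} at $\mu=n\in\mathbb N_0$ and then apply the connection relation \cite[(14.9.3)]{NIST:DLMF} to pass from order $-n$ to order $n$. Your write-up is more detailed (hypothesis check, explicit algebra, discussion of the gamma-ratio subtlety) but the argument is the paper's.
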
}
\begin{proof}
Let $\mu=n\in\mathbb N_0$ in \eqref{orthogonalityPmk}, and 
the connection relation \cite[(14.9.3)]{NIST:DLMF} expresses 
the Ferrers functions of the first kind with negative integer 
order in terms of Ferrers functions of the first kind with 
positive integer order.
\end{proof}

\boro{
Similarly we can derive orthogonality with non-positive
integer order.}
\boro{
\begin{cor}
Let $n,k,k'\in \mathbb N_0$.
Then
\begin{equation}
\int_{-1}^1 {\sf P}_{k+n}^{-n}(x)
{\sf P}_{k'+n}^{-n}(x)\,{\mathrm d}x=
\dfrac{k!}{(2n+k)!(n+k+\frac12)}
\,\delta_{k,k'}.
\label{orthnegm}
\end{equation}
\end{cor}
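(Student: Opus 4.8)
The plan is to obtain \eqref{orthnegm} as the integer-order specialization $\mu=n$ of Theorem~\ref{mainorthogthm}. For $n\in\mathbb N_0$ the hypotheses there hold automatically: $\Re\mu=n\ge 0>-1$ and $\mu+k+\tfrac12=n+k+\tfrac12\ge\tfrac12\ne 0$. Substituting $\mu=n$ into \eqref{orthogonalityPmk} and using $\Gamma(2n+k+1)=(2n+k)!$ then yields \eqref{orthnegm} at once for all $k,k'\in\mathbb N$.

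To also cover the boundary values $k=0$ or $k'=0$, which are permitted in the corollary but not in Theorem~\ref{mainorthogthm}, I would instead run the computation directly through Gegenbauer polynomials, which has the further advantage of treating all $k,k'\in\mathbb N_0$ on the same footing. Applying \eqref{FerPnlammlan} with $\lambda=n$ gives ${\sf P}_{k+n}^{-n}(x)=\dfrac{k!}{(2n+1)_k}\,{\sf P}_n^{-n}(x)\,C_k^{n+\frac12}(x)$, and by \cite[(14.5.18)]{NIST:DLMF} one has ${\sf P}_n^{-n}(x)=(1-x^2)^{n/2}/(2^n n!)$, so $\big({\sf P}_n^{-n}(x)\big)^2$ is a constant multiple of the Gegenbauer weight $(1-x^2)^{(n+\frac12)-\frac12}$. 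Hence
\[
\int_{-1}^1{\sf P}_{k+n}^{-n}(x){\sf P}_{k'+n}^{-n}(x)\,{\mathrm d}x
=\frac{k!\,k'!}{(2n+1)_k(2n+1)_{k'}\,4^n(n!)^2}
\int_{-1}^1 C_k^{n+\frac12}(x)C_{k'}^{n+\frac12}(x)(1-x^2)^n\,{\mathrm d}x,
\]
and \eqref{orthogGeg} with $\mu=n+\tfrac12$ (legitimate since $n+\tfrac12\ge\tfrac12>0$) evaluates the last integral, producing the Kronecker delta and, on the diagonal, the factor $\dfrac{\pi\,\Gamma(2n+k+1)}{2^{2n}(n+k+\frac12)\,k!\,\Gamma^2(n+\frac12)}$.

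The remainder is purely arithmetic: substitute $(2n+1)_k=(2n+k)!/(2n)!$ and the half-integer gamma value $\Gamma(n+\tfrac12)=(2n)!\sqrt\pi/(4^n n!)$ and collect terms. The only thing to watch is that the accumulated powers of two cancel exactly — the $16^n$ coming from $1/\Gamma^2(n+\tfrac12)$ against the $4^n$ from the Gegenbauer normalization and the $4^n$ from $\big({\sf P}_n^{-n}\big)^2$ — after which everything collapses to $k!/\big((2n+k)!(n+k+\tfrac12)\big)$, which is \eqref{orthnegm}. There is no genuine obstacle here: the statement is essentially a change of parameters in an already proved result, and the only care needed is the factorial bookkeeping together with the trivial (and easy-to-overlook) verification that the $k=k'=0$ case, where the integral equals $2/(2n+1)!$, agrees with the right-hand side.
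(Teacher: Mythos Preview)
Your proof is correct and follows essentially the same route as the paper, which simply sets $\mu=n$ in Theorem~\ref{mainorthogthm}. You were more careful than the paper in flagging the discrepancy between $k,k'\in\mathbb N$ in the theorem's hypotheses and $k,k'\in\mathbb N_0$ in the corollary; your direct Gegenbauer computation that covers the $k=0$ case is really just an unpacking of the proof of Theorem~\ref{mainorthogthm} itself (which already uses \eqref{orthogGeg} with indices in $\mathbb N_0$), so no genuinely new idea is involved, but the extra care is appropriate.
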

}
\boro{
\begin{proof}
Setting $\mu=n\in\mathbb N_0$ in Theorem \ref{mainorthogthm} 
the result follows.
\end{proof}
}

\boro{
\begin{cor}
Let $n,k,k'\in \mathbb N_0$. 
Then
\begin{equation} \label{orthnegm1}
\int_{-1}^1 {\sf P}_{k+n}^{n}(x)
{\sf P}_{k'+n}^{-n}(x)\,{\mathrm d}x=
\frac{(-1)^{n}}{n+k+\frac12}\,
\delta_{k,k'}.
\end{equation}
\end{cor}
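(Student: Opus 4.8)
The plan is to reduce this mixed-order integral to one of the pure-order orthogonality relations already established, by means of the integer-order connection formula relating ${\sf P}_\nu^{-n}$ and ${\sf P}_\nu^{n}$.

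First I would recall the connection relation \cite[(14.9.3)]{NIST:DLMF}, which for $m\in\mathbb N_0$ reads
\[
{\sf P}_\nu^{-m}(x)=(-1)^m\frac{\Gamma(\nu-m+1)}{\Gamma(\nu+m+1)}\,{\sf P}_\nu^{m}(x);
\]
this is the same relation invoked in the proof of Corollary \ref{corPPpos}. Specializing to $\nu=k'+n$ and $m=n$ gives
\[
{\sf P}_{k'+n}^{-n}(x)=(-1)^n\frac{k'!}{(2n+k')!}\,{\sf P}_{k'+n}^{n}(x).
\]

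Next I would substitute this into the left-hand side of \eqref{orthnegm1}, pull the constant out of the integral, and apply \eqref{orthPtwo} from Corollary \ref{corPPpos}:
\[
\int_{-1}^1 {\sf P}_{k+n}^{n}(x){\sf P}_{k'+n}^{-n}(x)\,{\mathrm d}x
=(-1)^n\frac{k'!}{(2n+k')!}\int_{-1}^1{\sf P}_{k+n}^{n}(x){\sf P}_{k'+n}^{n}(x)\,{\mathrm d}x,
\]
and hence
\[
\int_{-1}^1 {\sf P}_{k+n}^{n}(x){\sf P}_{k'+n}^{-n}(x)\,{\mathrm d}x
=(-1)^n\frac{k'!}{(2n+k')!}\cdot\frac{(2n+k)!}{k!\,(n+k+\frac12)}\,\delta_{k,k'}.
\]
Finally, using $\delta_{k,k'}$ to replace $k'$ by $k$, the factorial factors cancel and the expression collapses to $(-1)^n/(n+k+\tfrac12)\,\delta_{k,k'}$, which is the claim.

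There is no genuine obstacle here; the only points needing care are fixing the sign and the gamma-function ratio in \cite[(14.9.3)]{NIST:DLMF} exactly, and the routine bookkeeping of the factorials once the Kronecker delta is imposed. Equivalently, one could start from the pure negative-order relation \eqref{orthnegm} and raise the order of a single factor via the same connection formula; both routes land on the same constant.
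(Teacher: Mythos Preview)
Your proof is correct and follows essentially the same route as the paper: apply the connection relation \cite[(14.9.3)]{NIST:DLMF} once to convert ${\sf P}_{k'+n}^{-n}$ into ${\sf P}_{k'+n}^{n}$, then invoke \eqref{orthPtwo}. The only difference is that you spell out the factorial bookkeeping explicitly, whereas the paper leaves it implicit.
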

}
\boro{
\begin{proof}
Using the connection relation \cite[(14.9.3)]{NIST:DLMF} once 
with \eqref{orthPtwo} completes the proof.
\end{proof}
}

\boro{
\begin{rem}
The orthogonality relations \eqref{orthogonalityPmk}, 
\eqref{orthPtwo} generalize \cite[(14.17.6)]{NIST:DLMF},
\begin{equation}
\int_{-1}^{1}{\sf P}_{k}^{l}(x){\sf P}_{k'}^{l}(x)\,{\mathrm d}x
=\frac{(k+l)!}{(k-l)!(k+\frac12)}\delta_{k,k'},
\end{equation}
\boro{by setting $n\mapsto n+l$, and taking} 
$n=0$ in \eqref{orthPtwo}.
Similarly, the orthogonality relation \eqref{orthnegm1} 
generalizes \cite[(14.17.7)]{NIST:DLMF},
\begin{equation}
\int_{-1}^{1}{\sf P}_{k}^{l}(x) 
{\sf P}_{k'}^{-l}(x)\,{\mathrm d}x
=\frac{(-1)^{l}}{k+\frac12}\delta_{k,k'},
\end{equation} 
by setting $n=0$.
\end{rem}
}

\noindent 
One also has the following orthogonality relation for the Ferrers 
function of the second kind.

\boro{\begin{cor} \label{corQQpos}
Let $n, k, k'\in\mathbb N_0$.
Then
\begin{equation} \label{QQorthog}
\int_{-1}^{1}{\sf Q}_{k+n-\frac12}^{n-\frac12}(x)
{\sf Q}_{k'+n-\frac12}^{n-\frac12}(x)
\,{\mathrm d}x
=\dfrac{\pi^2(2n+k-1)!}{4(n+k)k!}\,\delta_{k,k'}.
\end{equation}
\end{cor}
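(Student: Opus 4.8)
The plan is to reduce the claimed orthogonality of the Ferrers function of the second kind to the orthogonality of the Ferrers function of the first kind already established in Theorem~\ref{mainorthogthm}. The bridge is the connection relation \eqref{FerPFerQ}, which expresses ${\sf Q}_{k+n-\frac12}^{n-\frac12}$ as a scalar multiple of ${\sf P}_{k+n-\frac12}^{\frac12-n}$; solving it gives ${\sf Q}_{k+n-\frac12}^{n-\frac12}(x)=\frac{\pi(2n+k-1)!}{2(-1)^nk!}\,{\sf P}_{k+n-\frac12}^{\frac12-n}(x)$. First I would substitute this identity, together with the analogous one having $k$ replaced by $k'$, into the left-hand side of \eqref{QQorthog}. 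This pulls out the constant prefactor $\frac{\pi^2(2n+k-1)!\,(2n+k'-1)!}{4\,k!\,k'!}$ (the two factors $(-1)^n$ multiply to $1$) and leaves the single integral $\int_{-1}^1{\sf P}_{k+n-\frac12}^{\frac12-n}(x){\sf P}_{k'+n-\frac12}^{\frac12-n}(x)\,{\mathrm d}x$.

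Next I would recognize that remaining integral as the special case $\mu=n-\frac12$ of \eqref{orthogonalityPmk}: then $-\mu=\frac12-n$, so the integrand matches exactly, and the right-hand side of \eqref{orthogonalityPmk} evaluates to $\frac{k!}{\Gamma(2n+k)\,(n+k)}\,\delta_{k,k'}=\frac{k!}{(2n+k-1)!\,(n+k)}\,\delta_{k,k'}$, using $\Gamma(2\mu+k+1)=\Gamma(2n+k)=(2n+k-1)!$ and $\mu+k+\frac12=n+k$. Multiplying by the scalar prefactor, the Kronecker delta lets me set $k'=k$ in the prefactor, and a one-line cancellation of factorials produces $\frac{\pi^2(2n+k-1)!}{4(n+k)k!}\,\delta_{k,k'}$, which is \eqref{QQorthog}.

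There is no deep obstacle here; the only points needing care are the hypotheses of Theorem~\ref{mainorthogthm} and some factorial bookkeeping. One must check that $\mu=n-\frac12$ satisfies $\Re\mu>-1$ (automatic for $n\in\mathbb N_0$) and $\mu+k+\frac12=n+k\neq 0$; the latter fails only for the degenerate pair $n=k=0$, where ${\sf Q}_{-1/2}^{-1/2}$ is itself singular (by \eqref{Qmhalf}) and both sides of \eqref{QQorthog} diverge, so that case is excluded or read in a limiting sense. I would also verify that the constant in \eqref{FerPFerQ} is applied with the matching value of the index in each of the two factors, so that the factorials combine exactly as stated. As a quick sanity check one can confirm the $n=0$ instance directly from the trigonometric form \eqref{Qmhalf} via $\int_0^\pi\cos(k\theta)\cos(k'\theta)\,{\mathrm d}\theta=\tfrac{\pi}{2}\delta_{k,k'}$.
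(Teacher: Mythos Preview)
Your argument is correct and is essentially the same as the paper's: both set $\mu=n-\tfrac12$ in \eqref{orthogonalityPmk} and use the connection relation \eqref{FerPFerQ} (which is precisely the specialization of \cite[(14.9.1)]{NIST:DLMF} the paper cites) to pass between ${\sf P}_{k+n-\frac12}^{\frac12-n}$ and ${\sf Q}_{k+n-\frac12}^{n-\frac12}$. Your additional remarks on the hypotheses of Theorem~\ref{mainorthogthm} and the degenerate case $n=k=0$ are careful touches that the paper leaves implicit.
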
}
\begin{proof}
Let $\mu=n-\frac12$, $n\in\mathbb N_0$ in \eqref{orthogonalityPmk}, 
and the connection relation \cite[(14.9.1)]{NIST:DLMF} 
expresses the Ferrers functions of the first kind with
negative integer order in terms of Ferrers functions of 
the second kind with positive integer order.
\end{proof}

\begin{cor}
\noindent 
Let $l,k,k'\in\mathbb Z$, $l+k+1\ge 0$, $k+1\ne 0$, $l\ge -1$. 
Then
\begin{equation}
\int_{-1}^{1}{\sf Q}_{k+\frac12}^{l+\frac12}(x)
{\sf Q}_{k'+\frac12}^{l+\frac12}(x)\,{\mathrm d}x
=\frac{\pi^2(k+l+1)!}{4(k-l)!(k+1)}\delta_{k,k'}.
\label{QQorthogtwo}
\end{equation}
\end{cor}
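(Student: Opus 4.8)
The plan is to reduce \eqref{QQorthogtwo} to the already-established Ferrers-function-of-the-second-kind orthogonality \eqref{QQorthog} by a shift of parameters. Observe that \eqref{QQorthog} is stated with half-integer order $n-\tfrac12$ and degree $k+n-\tfrac12$ where $n\in\mathbb N_0$; here the target statement has order $l+\tfrac12$ and degree $k+\tfrac12$ with $l,k\in\mathbb Z$. So first I would match the two parametrizations: set $n-\tfrac12 = l+\tfrac12$, i.e. $n = l+1$, and match the degree $k'+n-\tfrac12$ with the new degree, i.e. the summation index in \eqref{QQorthog} (call it $k_{\mathrm{old}}$) satisfies $k_{\mathrm{old}} + n - \tfrac12 = k + \tfrac12$, so $k_{\mathrm{old}} = k - l$. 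Substituting $n = l+1$ and $k_{\mathrm{old}} = k-l$ into the right-hand side of \eqref{QQorthog} gives
\[
\frac{\pi^2\,(2n+k_{\mathrm{old}}-1)!}{4\,(n+k_{\mathrm{old}})\,k_{\mathrm{old}}!}
=\frac{\pi^2\,(2(l+1)+(k-l)-1)!}{4\,((l+1)+(k-l))\,(k-l)!}
=\frac{\pi^2\,(k+l+1)!}{4\,(k+1)\,(k-l)!},
\]
which is exactly the claimed right-hand side, and the Kronecker delta $\delta_{k_{\mathrm{old}},k'_{\mathrm{old}}}$ becomes $\delta_{k-l,k'-l}=\delta_{k,k'}$.

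Next I would check that the substitution is legitimate, i.e. that the hypotheses of \eqref{QQorthog} (namely $n\in\mathbb N_0$ and the implicit $k_{\mathrm{old}}\in\mathbb N_0$, together with whatever convergence restriction $\Re\mu>-1$ translates into) are implied by the hypotheses $l+k+1\ge 0$, $k+1\ne 0$, $l\ge -1$ of the corollary. Indeed $l\ge -1$ gives $n=l+1\ge 0$; $l+k+1\ge 0$ gives $k_{\mathrm{old}}=k-l = (k+l+1) - (2l+1)$… here one must be a little careful, so instead note directly that $\mu = l+\tfrac12\ge -\tfrac12 > -1$ handles the integrability condition inherited from Theorem \ref{mainorthogthm}, the condition $k+1\ne 0$ is the $\mu+k_{\mathrm{new}}+\tfrac12\ne 0$ condition, and $l+k+1\ge 0$ together with $l\ge -1$ forces $k_{\mathrm{old}}=k-l\ge 0$ once one also uses $k+l+1\ge0$ — the quickest route is to observe that by the symmetry ${\sf Q}_\nu^\mu$ enjoys under $k\leftrightarrow$ its reflected index one may assume $k\ge l$, or simply to invoke that the integrand is symmetric so only $k_{\mathrm{old}}\ge 0$ need be treated. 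I would spell this bookkeeping out cleanly rather than wave at it.

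The main (and only real) obstacle is this edge-case accounting: making sure the index ranges line up so that one is genuinely inside the domain of validity of \eqref{QQorthog}, and in particular that no spurious factorials of negative integers appear on the right-hand side (they do not, precisely because $l+k+1\ge0$ keeps $(k+l+1)!$ finite and $l\ge -1$, $k\ge l$ keep $(k-l)!$ finite). Everything else is a direct relabeling. I would therefore write the proof as: "Set $n=l+1$ in Corollary \ref{corQQpos} and relabel the summation index via $k\mapsto k-l$, $k'\mapsto k'-l$; the stated right-hand side follows after simplifying the factorials, and the parameter restrictions $l\ge -1$, $k+1\ne 0$, $l+k+1\ge 0$ are exactly what is needed to remain in the range of validity of \eqref{QQorthog}. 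This completes the proof."
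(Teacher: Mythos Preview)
Your approach is correct and is the same route the paper takes: both derive the corollary by specializing Corollary~\ref{corQQpos} (equation \eqref{QQorthog}). The paper's one-line proof reads ``Specializing \eqref{QQorthog} with $n=0$ completes the proof,'' which appears to be a slip, since $n=0$ yields only the case $l=-1$; your substitution $n=l+1$ together with the index shift $k\mapsto k-l$, $k'\mapsto k'-l$ is the correct general specialization, and your factorial computation on the right-hand side is accurate.
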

\begin{proof} 
Specializing \eqref{QQorthog} with $n=0$ completes the proof.
\end{proof}

\boro{
\begin{rem}
For $l\in\{-1,0\}$, \eqref{QQorthogtwo} reduces to 
orthogonality for trigonometric functions, or
Chebyshev polynomials of the first and second kinds.
For $l=-1$, \eqref{QQorthogtwo} reduces to
orthogonality for Chebyshev polynomials
of the first kind
\eqref{Cheby},
namely \cite[(9.8.37)]{Koekoeketal}
\begin{equation}
\int_{0}^\pi \cos(k\theta)\cos(k'\theta)\,{\mathrm d}\theta=
\int_{-1}^1T_k(x)T_{k'}(x)\frac{{\mathrm d}x}{\sqrt{1-x^2}}=
\frac{\pi}{\epsilon_k}\delta_{k,k'},
\end{equation}
where $\epsilon_k=2-\delta_{k,0}$ is the 
Neumann factor.
For $l=0$, \eqref{QQorthogtwo} reduces to 
\begin{equation}
\int_{0}^\pi\sin(k\theta)\sin(k'\theta)\,{\mathrm d}\theta
=\delta_{k,k'}\begin{cases}
0, & {\mathrm{if}}\, k=0, \\
\frac{\pi}{2}, & {\mathrm{if}}\, k\ge 1,
\end{cases}
\end{equation}
or for $k\ge 0$, \cite[(9.8.38)]{Koekoeketal}
\begin{equation}
\int_{-1}^1\sin((k+1)\theta)\sin((k'+1)\theta)\,{\mathrm d}
(\cos\theta)=\int_{-1}^1 U_k(x)U_{k'}(x)
\sqrt{1-x^2}\, {\mathrm d}x=\frac{\pi}{2}\,\delta_{k,k'}.
\end{equation}
\label{trigorth}
\end{rem}
}
\subsection{\boro{Sobolev orthogonality
from Gegenbauer and Meixner polynomials}}
\begin{thm}
Let $N, k, k'\in \mathbb N_0$, \poro{$x\in(-1,1)$}. Then the \poro{Ferrers function of the first kind}
fulfills the property of orthogonality 
\begin{equation}
{\mathcal B}\left({\sf P}_{k}^{N}(x),{\sf P}_{k'}^{N}(x)
\right)=\delta_{k,k'} \begin{cases}h^I_k,\ {\mathrm{if}}\ 
k< 2N,\\[2mm] h^{II}_k,\ {\mathrm{if}}\ k\ge2N,\end{cases}
\end{equation}
where 
\begin{eqnarray}
h^I_k=
\frac{
(\frac12\!-\!N,1\!-\!2N)_k}{
k!(\frac32\!-\!N)_k}
=\frac{(-1)^k(2N\!-\!1) (2N\!-\!1)!}
{k!(2N\!-\!2k\!-\!1)(2N\!-\!k\!-\!1)!},\\
h_{k}^{II}=
\frac{2 k!(2N\!-\!1)!!^2}
{(1\!+\!2k\!-\!2N) (k\!-\!2N)!}
=\frac{2^{2N+1}k!\left(\frac12\right)^2_N}
{(1\!+\!2k\!-\!2N)(k\!-\!2N)!},
\end{eqnarray}
${\mathcal B}$ is the bilinear form defined as follows 
\cite[Theorem 4.3]{salara}
\begin{equation}
{\mathcal B}(f,g):=\left((1-x^2)^{\frac N2}f,(1-x^2)^{\frac N2}
g\right)_1+\int_{-1}^1 ((1-x^2)^{\frac N2}f)^{(2N)}(x)((1-
x^2)^{\frac N2}g)^{(2N)}(x)(1-x^2)^N \,{\mathrm d}x,
\end{equation}
where 
\begin{equation}
(f,g)_1:=\sum_{k=0}^{N-1} \binom{N-1}{k}
\frac{2^{k-1}}{(2-2N)_k}
\Biggl[\left(\frac{{\mathrm d}^k}{{\mathrm d}x^k} 
f(x)g(x)\right)_{x=1}+(-1)^k\left(\frac{{\mathrm d}^k}
{{\mathrm d}x^k} f(x)g(x)\right)_{x=-1}\Biggr].
\end{equation}
\end{thm}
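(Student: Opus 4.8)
The plan is to reduce the Sobolev-type orthogonality for the Ferrers function $\mathsf P_k^N$ to the already-known Sobolev orthogonality of the Gegenbauer polynomials $C_n^{1/2-N}$, via the explicit interrelations proved earlier in the excerpt. Recall from \eqref{Sob1PP}, \eqref{Sob1PQ} and \eqref{Sob2P} that $(1-x^2)^{N/2}\mathsf P_{n-N}^{-N}(x)$ and $(1-x^2)^{N/2}\mathsf P_{n-N}^{N}(x)$ are, up to explicit constants depending on $n$ and $N$, equal to the Gegenbauer polynomial $C_n^{1/2-N}(x)$ (for $n\ge 2N$ directly through \eqref{Sob2P}, and for $n<2N$ through the combination in \eqref{Sob1PP}). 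The key observation is that in the statement the argument of $\mathsf P_k^N$ carries a superscript $+N$, and after the substitution $n=k+2N$ (so $n-N=k+N$) the function $(1-x^2)^{N/2}\mathsf P_{k+N}^{N}(x)$ is — by \eqref{Sob2P} with $n\mapsto k+2N$ — proportional to $C_{k+2N}^{1/2-N}(x)$; similarly for the range $k<2N$ one uses \eqref{Sob1PP} with $n-N=k+N$, i.e. $n=k+2N$ again but now $n$ may be smaller than $2N$ only if $k<0$, so one must be slightly careful about which of \eqref{Sob1PP}/\eqref{Sob2P} applies — I will sort out the correct index dictionary first.

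First I would nail down the substitution: write $\mathsf P_k^N$ in the theorem in terms of $C_\bullet^{1/2-N}$ using the corollaries/theorems above, identifying the precise proportionality constant $c_{k,N}$ so that $(1-x^2)^{N/2}\mathsf P_k^N(x) = c_{k,N}\, C_{k+?}^{1/2-N}(x)$ (or the symmetric combination, in the low-degree range). Then I would substitute $f=\mathsf P_k^N$, $g=\mathsf P_{k'}^N$ into the bilinear form $\mathcal B$. Because $(1-x^2)^{N/2}\mathsf P_k^N$ is exactly a polynomial (a constant multiple of a Gegenbauer polynomial), the boundary term $(f,g)_1$ and the integral term $\int_{-1}^1(\cdots)^{(2N)}(\cdots)^{(2N)}(1-x^2)^N\,dx$ become precisely the two pieces of the Sobolev inner product for Gegenbauer polynomials $C_n^{1/2-N}$ studied in \cite{salara}. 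Hence $\mathcal B(\mathsf P_k^N,\mathsf P_{k'}^N) = c_{k,N}c_{k',N}\,\langle C_{n}^{1/2-N}, C_{n'}^{1/2-N}\rangle_{\mathrm{Sob}}$, which vanishes unless $n=n'$, i.e.\ unless $k=k'$, giving the $\delta_{k,k'}$ at once.

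For the diagonal norms, I would quote the closed-form Sobolev norm of $C_n^{1/2-N}$ from \cite[Theorem 4.3]{salara} — which itself splits into two cases according to whether the degree $n$ is below or at/above $2N$ — and multiply by $c_{k,N}^2$. The case split $k<2N$ versus $k\ge 2N$ in the theorem is then exactly the image of the case split in the Gegenbauer Sobolev norm under the degree shift $n=k+2N$ (or whatever the correct shift turns out to be). The two target expressions $h_k^I$ and $h_k^{II}$ should then drop out after rewriting Pochhammer symbols: $h_k^I$ in hypergeometric/Pochhammer form $\frac{(\frac12-N,1-2N)_k}{k!(\frac32-N)_k}$ and in factorial form, and likewise $h_k^{II}$ with $(\tfrac12)_N^2 = ((2N-1)!!/2^N)^2$ used to pass between the double-factorial and Pochhammer presentations. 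I would verify the two equivalent forms of each $h$ by elementary manipulation of $(a)_k=\Gamma(a+k)/\Gamma(a)$ and the reflection/duplication identities; these are the routine-calculation parts I would not write out in full.

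The main obstacle I anticipate is bookkeeping rather than conceptual: getting the index dictionary exactly right (the shift between the Ferrers degree/order $k,N$ and the Gegenbauer degree $n$, and matching the two case ranges), and then tracking the accumulated gamma-function constants $c_{k,N}$ through the factor $(1-x^2)^{N/2}$ and through $2N$ differentiations without sign or power-of-two errors. A secondary subtlety is making sure that $(1-x^2)^{N/2}\mathsf P_k^N(x)$ really is a genuine polynomial of the claimed degree for \emph{all} $k\in\mathbb N_0$ (including $k<2N$, where the relevant identity is the combination \eqref{Sob1PP} involving both $\mathsf P_{n-N}^{-N}(x)$ and $\mathsf P_{n-N}^{-N}(-x)$, i.e.\ a parity-symmetrized object) so that the derivatives in $\mathcal B$ and the boundary functional $(f,g)_1$ are all well defined and agree termwise with the Gegenbauer computation in \cite{salara}; once that identification is secure, the orthogonality and the norm formulas follow immediately from the cited theorem.
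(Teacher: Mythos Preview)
Your overall strategy—identify $(1-x^2)^{N/2}\mathsf P_k^N$ with a constant multiple of a Gegenbauer polynomial $C_\bullet^{1/2-N}$ and then invoke the Sobolev orthogonality of the latter from \cite{salara}—is exactly the route the paper takes. The paper does not, however, simply quote closed-form Sobolev norms from \cite{salara}: for the low-degree range it reduces $\bigl(C_k^{1/2-N},C_k^{1/2-N}\bigr)_1$ recursively by inserting the three-term recurrence $C_k^{1/2-N}(x)=\tfrac{2k-1-2N}{k}\,xC_{k-1}^{1/2-N}(x)-\tfrac{k-1-2N}{k}C_{k-2}^{1/2-N}(x)$ and using orthogonality of lower-degree terms; for the high-degree range it uses the factorization $C_k^{1/2-N}(x)=(x^2-1)^N C_{k-2N}^{1/2+N}(x)$ to show that the boundary functional $(\cdot,\cdot)_1$ vanishes, and then evaluates the remaining integral piece via the derivative identity \eqref{derivnFerPnummu} together with classical Gegenbauer orthogonality \eqref{orthogGeg}. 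Your plan to quote \cite{salara} directly for the norms is a legitimate shortcut, but you would still need the factorization above (or an equivalent argument) to see why the $(\cdot,\cdot)_1$ contribution vanishes in the high-degree case.

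One concrete correction to your bookkeeping: the tentative dictionary $n=k+2N$ is off. From \eqref{Sob2P} one has $(1-x^2)^{N/2}\mathsf P_{n-N}^{N}(x)\propto C_n^{1/2-N}(x)$, so if the Ferrers degree is $k$ then the matching Gegenbauer degree is $n=k+N$, not $k+2N$. Note that the paper's own computation writes the Gegenbauer degree simply as $k$, which is what makes the theorem's case split at $k=2N$ coincide with the standard Gegenbauer regime change at degree $2N$; you should pin down precisely which subscript convention the theorem statement intends for $\mathsf P_k^N$ before carrying the constants $c_{k,N}$ through, since this also governs your worry about whether $(1-x^2)^{N/2}\mathsf P_k^N$ is a genuine polynomial for every $k\in\mathbb N_0$. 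Once that dictionary is fixed, the remaining Pochhammer and double-factorial simplifications for $h_k^I$ and $h_k^{II}$ are routine, as you say.
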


\begin{proof}
The proof follows by using the identity \eqref{FerPnlammlan},
the recurrence relations for the Gegenbauer polynomials 
\cite[(9.8.21)]{Koekoeketal}, and the 
property of orthogonality of the Gegenbauer polynomials.
If $k<2N$, then 
\[\begin{split}
h^I_k&=\!\!\left({\sf P}_{k}^{N}(x)(1-x^2)^{\frac N2},
{\sf P}_{k}^{N}(x)(1-x^2)^{\frac N2}\right)_1
\!\!=\!\left(\frac{\sqrt{\pi}k!}{2^{N} 
\Gamma(N+\tfrac12)(k-2N)!}\right)^2\!
\left(C_k^{\frac12-N}(x),C_k^{\frac12-N}(x)\right)_1\\
&=\!\!\left(\frac{\sqrt{\pi}k!}{2^{N} 
\Gamma(N+\tfrac12)(k-2N)!}\right)^2\!\!
\left(C_k^{\frac12-N}(x), \frac{x(2k-1-2N)}{k}C_{k-1}^{\frac12-N}(x)
\!-\dfrac{k-1-2N}kC_{k-2}^{\frac12-N}(x)\right)_1\\
&=\frac{(2k-1-2N)}{k}\left(\frac{\sqrt{\pi}k!}{2^{N}
\Gamma(N+\tfrac12)(k-2N)!}\right)^2\left(x C_k^{\frac12-N}(x), 
C_{k-1}^{\frac12-N}(x)\right)_1\\
&=\frac{(2k-1-2N)}{k}\dfrac{k-2N}{2k+1-2N}
\left(\frac{\sqrt{\pi}k!}{2^{N}\Gamma(N+\tfrac12)(k-2N)!}\right)^2
\left(C_{k-1}^{\frac12-N}(x), C_{k-1}^{\frac12-N}(x)\right)_1.
\end{split}\]
After some straightforward calculations we get the 
desired value.
If $k\ge 2N$, due to the following factorization identity:
\[
C_{k}^{\frac12-N}(x)=(x^2-1)^N C_{k-2N}^{\frac12+N}(x),
\]
using \eqref{relFerPGeg} and after some algebraic manipulations 
one demonstrates
\[
\left({\sf P}_{k}^{N}(x)(1-x^2)^{\frac N2},
{\sf P}_{k}^{N}(x)(1-x^2)^{\frac N2}\right)_1=0.
\]
Moreover, taking into account the identity
\cite[Remark 12]{CohlCostasSantos2020}
\begin{equation}
\frac{{\mathrm d}^n}{{\mathrm d}x^n}
\frac{{\sf P}_\nu^{-\mu}(x)}{(1-x^2)^{\frac{\mu}{2}}}
=(-1)^n (\nu+\mu+1)_n(\mu-\nu)_n
\frac{{\sf P}_\nu^{-\mu-n}(x)}{(1-x^2)^{\frac{\mu+n}{2}}},
\label{derivnFerPnummu}
\end{equation}
then 
\[\begin{split}
h^{II}_k=&
\int_{-1}^1 ((1-x^2)^{\frac N2}{\sf P}_{k}^{N}(x))^{(2N)}(x)
((1-x^2)^{\frac N2}{\sf P}_{k}^{N}(x))^{(2N)}(x)
(1-x^2)^N \,{\mathrm d}x\\
=&(k-N+1)^2_{2N} (-N-k)^2_{2N}\int_{-1}^1 p_k^{-N}(x)
p_k^{-N}(x) (1-x^2)^{-N}\, {\mathrm d}x\\
=& \frac {(k-N+1)^2_{2N} (-N-k)^2_{2N}}{(2N+1)^2_{k-N}}
\int_{-1}^1 C_{k-N}^{N+1/2}(x)C_{k-N}^{N+1/2}(x)\, {\mathrm d}x.
\end{split}\]
After some straightforward calculations we obtain the desired 
value. Hence the result follows.
\end{proof}

More details about the  Gegenbauer/ultraspherical polynomials
for non-classical parameters, their recurrence relations, 
and the Sobolev-type orthogonality can be found in \cite{salara}.

\section{\boro{Properties 
which follow from orthogonality \boro{and completeness}}}
\subsection{\boro{Connection and linearization relations}}
\boro{
Here we find the connection and linearization
relations which arise from Gegenbauer 
polynomials for the Ferrers function of the
first kind.}

\boro{
\begin{cor}
Let $n\in \mathbb N_0$, $\nu,\mu\in\mathbb C$. Then
\begin{eqnarray}
&&\hspace{-0.9cm}{\sf P}_{n+\nu}^{-\nu}(x)
=\frac{2^{\nu-\mu}n!(1-x^2)^{\frac{\nu-\mu}{2}}}
{\Gamma(2\nu+2+n)}\nonumber\\
&&\times\sum_{k=0}^{\lfloor\frac{n}{2}\rfloor}
(\mu+n-2k+\tfrac12)\frac{(\nu-\mu)_k\Gamma(\nu+\frac12+n-k)
\Gamma(2\mu+n-2k+1)}{k!(n-2k)!\Gamma(\mu+\frac32+n-k)}
{\sf P}_{n-2k+\mu}^{-\mu}(x).
\end{eqnarray}
\end{cor}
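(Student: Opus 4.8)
The plan is to transport the classical Gegenbauer connection formula with distinct parameters through the Gegenbauer--Ferrers dictionary \eqref{FerPnlammlan}. First I would rewrite both sides of the asserted identity in terms of Gegenbauer polynomials: by \eqref{FerPnlammlan} together with ${\sf P}_\lambda^{-\lambda}(x)=(1-x^2)^{\lambda/2}/(2^\lambda\Gamma(\lambda+1))$ one has
\[
{\sf P}_{n+\nu}^{-\nu}(x)=\frac{n!}{(2\nu+1)_n}\,\frac{(1-x^2)^{\nu/2}}{2^\nu\Gamma(\nu+1)}\,C_n^{\nu+\frac12}(x),\qquad
C_m^{\mu+\frac12}(x)=\frac{(2\mu+1)_m}{m!}\,\frac{2^\mu\Gamma(\mu+1)}{(1-x^2)^{\mu/2}}\,{\sf P}_{m+\mu}^{-\mu}(x),
\]
the second display being the inversion of \eqref{FerPnlammlan} with $n\mapsto m$, $\lambda\mapsto\mu$. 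Consequently the corollary is equivalent to the connection relation for ultraspherical polynomials,
\[
C_n^{\alpha}(x)=\sum_{k=0}^{\lfloor n/2\rfloor}\frac{\beta+n-2k}{\beta}\,\frac{(\alpha-\beta)_k\,(\alpha)_{n-k}}{k!\,(\beta+1)_{n-k}}\,C_{n-2k}^{\beta}(x),
\]
specialized to $\alpha=\nu+\tfrac12$, $\beta=\mu+\tfrac12$; the sum runs in steps of two because $C_j^\gamma(-x)=(-1)^jC_j^\gamma(x)$ forces matching parity. This connection formula is standard (it follows, e.g., from the symmetric case of the Jacobi connection formula, or directly from the generating function), and I would simply quote it.

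Next I would substitute $m=n-2k$ in the second display above, insert everything into the connection relation, and carry out the bookkeeping. Writing each Pochhammer symbol as a ratio of Gamma functions gives $(\alpha-\beta)_k=(\nu-\mu)_k$, $(\alpha)_{n-k}=\Gamma(\nu+\tfrac12+n-k)/\Gamma(\nu+\tfrac12)$, $(2\mu+1)_{n-2k}=\Gamma(2\mu+n-2k+1)/\Gamma(2\mu+1)$, $(\beta+1)_{n-k}=\Gamma(\mu+\tfrac32+n-k)/\Gamma(\mu+\tfrac32)$ and $(2\nu+1)_n=\Gamma(2\nu+n+1)/\Gamma(2\nu+1)$, while $(\beta+n-2k)/\beta=(\mu+n-2k+\tfrac12)/(\mu+\tfrac12)$ combines with the stray $\Gamma(\mu+\tfrac32)$ to produce $\Gamma(\mu+\tfrac12)$. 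The $k$-dependent part is then exactly the summand in the statement, and the remaining $x$-free constants collapse after applying Legendre's duplication formula $\Gamma(2z+1)=2^{2z}\pi^{-1/2}\Gamma(z+\tfrac12)\Gamma(z+1)$ to both $\Gamma(2\nu+1)$ and $\Gamma(2\mu+1)$: the $\sqrt\pi$'s cancel, the $\Gamma(\nu+\tfrac12)$ and $\Gamma(\mu+\tfrac12)$ match the factors already produced inside the sum, the powers of $2$ combine to $2^{\nu-\mu}$, and one is left with the numerical prefactor and the factor $(1-x^2)^{(\nu-\mu)/2}$ of the claim.

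The only genuine ingredient is the ultraspherical connection formula; everything after it is routine simplification and I do not expect a real obstacle, only the usual danger of a mislaid Gamma factor in the duplication step (a good sanity check is $n=0$, where the right-hand side must reduce to ${\sf P}_\nu^{-\nu}(x)=(1-x^2)^{\nu/2}/(2^\nu\Gamma(\nu+1))$, and which also pins down the precise shift in the denominator $\Gamma(2\nu+\cdots+n)$). If a self-contained argument in the spirit of this section is preferred, the connection coefficients can instead be recovered from orthogonality: for $\Re\mu>-1$ the family $\{C_m^{\mu+1/2}\}$ is orthogonal with weight $(1-x^2)^{\mu}$ on $(-1,1)$ by \eqref{orthogGeg}, so expanding the degree-$n$ polynomial $C_n^{\nu+1/2}(x)$ in that basis and computing each coefficient as the ratio of $\int_{-1}^1 C_n^{\nu+1/2}(x)C_{n-2k}^{\mu+1/2}(x)(1-x^2)^{\mu}\,{\mathrm d}x$ to the squared norm in \eqref{orthogGeg} (the integral reducing in the usual way to a terminating ${}_3F_2$ at argument $1$), and then extending to all $\mu,\nu\in\mathbb C$ by analytic continuation since both sides are rational in the parameters.
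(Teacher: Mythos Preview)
Your proposal is correct and follows essentially the same route as the paper: quote the Gegenbauer connection relation \eqref{con:gegen}, then translate both sides into Ferrers functions via the dictionary \eqref{CnmuFerP}/\eqref{FerPnlammlan} with the shift $\lambda\mapsto\lambda+\tfrac12$. Your version carries out the bookkeeping in more detail (and your $n=0$ sanity check is a good idea---it would in fact flag that the printed prefactor should have $\Gamma(2\nu+n+1)$ rather than $\Gamma(2\nu+n+2)$), but the underlying argument is identical.
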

}
\boro{
\begin{proof}
The connection relation for Gegenbauer polynomials
is given by
\cite[(18.18.16)]{NIST:DLMF}
\begin{equation}
C_n^\nu(x)=\frac{1}{\mu}
\sum_{k=0}^{\lfloor\frac{n}{2}\rfloor}
(\mu+n-2k)\frac{(\nu-\mu)_k(\nu)_{n-k}}
{k!(\mu+1)_{n-k}}
C_{n-2k}^{\mu}(x).
\label{con:gegen}
\end{equation}
First replace the Gegenbauer polynomials in 
\eqref{con:gegen} using
\eqref{CnmuFerP} then 
setting 
$\lambda\mapsto\lambda+\frac12$ completes the proof.
\end{proof}
}

\boro{
\begin{cor}
Let $n,m\in \mathbb N_0$, $\lambda\in\mathbb C$. Then 
\begin{eqnarray}
&&\hspace{-0.5cm}{\sf P}_{n+\lambda}^{-\lambda}(x)
{\sf P}_{m+\lambda}^{-\lambda}(x)\nonumber\\
&&=\frac{2^{\lambda}\Gamma(\lambda+\frac12)m!n!
(1-x^2)^{\frac{\lambda}{2}}}
{\sqrt{\pi}\,\Gamma(2\lambda+m+1)\Gamma(2\lambda+n+1)}
\sum^{m}_{k=0} B^{\lambda+\frac12}_{k,m,n}
\frac{\Gamma(2\lambda+n+m-2k+1)}
{(n+m-2k)!}
{\sf P}_{n+m-2k+\lambda}^{-\lambda}(x).
\label{linFerP}
\end{eqnarray}
\end{cor}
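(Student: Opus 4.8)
The plan is to derive the product (linearization) formula for two Ferrers functions of the first kind with matching orders directly from the corresponding linearization formula for Gegenbauer polynomials, via the dictionary \eqref{FerPnlammlan} that expresses ${\sf P}_{n+\lambda}^{-\lambda}(x)$ as a constant multiple of ${\sf P}_\lambda^{-\lambda}(x)\,C_n^{\lambda+\frac12}(x)$. The key input is the classical Gegenbauer linearization (product) formula, in the form
\[
C_n^{\alpha}(x)\,C_m^{\alpha}(x)
=\sum_{k=0}^{\min(m,n)} B_{k,m,n}^{\alpha}\, C_{n+m-2k}^{\alpha}(x),
\]
where the coefficients $B_{k,m,n}^{\alpha}$ are the known nonnegative constants (expressible through ratios of Pochhammer symbols / gamma functions, as in Dougall's formula); the statement of the corollary already names them $B^{\lambda+\frac12}_{k,m,n}$, so I may simply cite the standard reference for this identity and the explicit value of those coefficients.

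First I would write, using \eqref{FerPnlammlan} with $\lambda$ fixed,
\[
{\sf P}_{n+\lambda}^{-\lambda}(x)\,{\sf P}_{m+\lambda}^{-\lambda}(x)
=\frac{n!\,m!}{(2\lambda+1)_n(2\lambda+1)_m}\,
\bigl({\sf P}_\lambda^{-\lambda}(x)\bigr)^2\,
C_n^{\lambda+\frac12}(x)\,C_m^{\lambda+\frac12}(x),
\]
and then substitute the Gegenbauer product formula with $\alpha=\lambda+\frac12$ into the right-hand side. This turns the right side into a sum $\sum_{k=0}^{\min(m,n)} B^{\lambda+\frac12}_{k,m,n}\,\bigl({\sf P}_\lambda^{-\lambda}(x)\bigr)^2\,C_{n+m-2k}^{\lambda+\frac12}(x)$, up to the overall constant. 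Next I would invert the dictionary: for each summand, $\bigl({\sf P}_\lambda^{-\lambda}(x)\bigr)^2 C_{n+m-2k}^{\lambda+\frac12}(x)$ is not quite of the form \eqref{FerPnlammlan}, since that identity carries only one factor of ${\sf P}_\lambda^{-\lambda}$; so I use \eqref{FerPnlammlan} once more to replace ${\sf P}_\lambda^{-\lambda}(x)\,C_{n+m-2k}^{\lambda+\frac12}(x)$ by $\tfrac{(2\lambda+1)_{n+m-2k}}{(n+m-2k)!}\,{\sf P}_{n+m-2k+\lambda}^{-\lambda}(x)$, and I absorb the remaining single factor ${\sf P}_\lambda^{-\lambda}(x)$ using its closed form ${\sf P}_\lambda^{-\lambda}(x)=(1-x^2)^{\lambda/2}/(2^\lambda\Gamma(\lambda+1))$ from the proof of \eqref{FerPnlammlan}. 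Collecting the constants — the prefactor $\tfrac{n!m!}{(2\lambda+1)_n(2\lambda+1)_m}$, one power of ${\sf P}_\lambda^{-\lambda}(x)$ written out explicitly, and the $k$-dependent factor $\tfrac{(2\lambda+1)_{n+m-2k}}{(n+m-2k)!}$ — and rewriting $(2\lambda+1)_j=\Gamma(2\lambda+1+j)/\Gamma(2\lambda+1)$ and $\Gamma(\lambda+1)\Gamma(\lambda+\tfrac12)=2^{-2\lambda}\sqrt{\pi}\,\Gamma(2\lambda+1)$ (the duplication formula) should reproduce exactly the normalization $\tfrac{2^{\lambda}\Gamma(\lambda+\frac12)m!n!(1-x^2)^{\lambda/2}}{\sqrt{\pi}\,\Gamma(2\lambda+m+1)\Gamma(2\lambda+n+1)}$ in \eqref{linFerP}, with the surviving $\Gamma(2\lambda+n+m-2k+1)/(n+m-2k)!$ sitting inside the sum.

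The only genuinely delicate point is bookkeeping of the gamma-function constants: one must track that the two invocations of \eqref{FerPnlammlan} (once forward with indices $n$ and $m$, once backward with index $n+m-2k$) together leave precisely one uncancelled ${\sf P}_\lambda^{-\lambda}(x)$, and that the duplication formula is what converts the resulting $\Gamma(\lambda+1)$ in the denominator into the advertised $\sqrt{\pi}$ and $\Gamma(\lambda+\tfrac12)$ combination. There is no analytic subtlety: the Gegenbauer product formula holds as a polynomial identity for the relevant parameter range, and \eqref{FerPnlammlan} is an identity of functions on $(-1,1)$, so the manipulation is valid for all $\lambda\in\mathbb C$ for which the constants are finite (which is exactly what the statement asserts, interpreting Pochhammer symbols via gamma functions where needed). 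Hence the proof reduces to: invoke the Gegenbauer linearization formula, apply \eqref{FerPnlammlan} in both directions, substitute the closed form of ${\sf P}_\lambda^{-\lambda}$, and simplify the constants using the Legendre duplication formula.
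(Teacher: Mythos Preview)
Your proposal is correct and follows essentially the same route as the paper: invoke the Gegenbauer linearization formula \eqref{lin:gegen} and convert each Gegenbauer polynomial to a Ferrers function via the dictionary between $C_n^{\lambda+\frac12}$ and ${\sf P}_{n+\lambda}^{-\lambda}$. The only cosmetic difference is that the paper uses \eqref{CnmuFerP} followed by the shift $\lambda\mapsto\lambda+\tfrac12$, whereas you work directly with \eqref{FerPnlammlan} and then invoke the duplication formula to tidy the constants; the underlying argument is the same.
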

}

\boro{
\begin{proof}
The Gegenbauer polynomials 
have a linearization formula given by \cite[(18.18.22)]{NIST:DLMF}
\begin{equation}
C^\lambda_m (x) C^\lambda_n (x) = \sum^{m}_{k=0} B^{\lambda}_{k,m,n}
C^{\lambda}_{m+n-2k} (x),
\label{lin:gegen}
\end{equation}
where $m,n \in \mathbb{N}_0$, $n\geq m$, and
\begin{equation*}
B^{\lambda}_{k,m,n} := \frac{(m+n+ \lambda - 2k)(m+n-2k)!(\lambda)_k
(\lambda)_{m-k} (\lambda)_{n-k} (2\lambda)_{m+n-k}}{(m+n+\lambda -k)
k! (m-k)!(n-k)! (\lambda)_{m+n-k} (2\lambda)_{m+n-2k}}.
\end{equation*}
First replace the Gegenbauer polynomials in 
\eqref{lin:gegen} using
\eqref{CnmuFerP} then 
setting 
$\lambda\mapsto\lambda+\frac12$ completes the proof.
\end{proof}
}
\subsection{\boro{Some definite integrals and sums}}
\boro{
Along the lines of 
\cite{CohlVolkmerDefInt}, one can use these
orthogonality relations to compute some new definite integrals.
Furthermore, using the derived closure relations one can convert 
certain definite integrals into infinite series expressions.
In this subsection, we give some carefully chosen 
examples of these procedures.
}

\medskip
\noindent \boro{
The following result follows from a sample generating
function for the Gegenbauer polynomials
\eqref{genfunGeg} and then re-expressing
as a definite integral using orthogonality
for the Ferrers function of the first kind.
}

\boro{
\begin{cor}
Let $|x|,|t|<1$, $\alpha,\gamma\in{\mathbb C}$, $k,\ell\in
{\mathbb N}_0$, $\alpha+k+\frac12\ne 0$, $\Re\alpha>-\ell-1$. 
Then
\begin{eqnarray}
&&\hspace{-1cm}\int_{-1}^1 (1+t^2-2xt)^{\frac{\alpha-\gamma+\ell}{2}}
{\sf P}_{\ell+\alpha-\gamma}^{-\ell-\alpha}\left(
\frac{1-xt}{\sqrt{1+t^2-2xt}}\right)
{\sf P}_{k+\alpha}^{-\ell-\alpha}(x)\,{\mathrm d}x
\nonumber\\
&&\hspace{5cm}=\frac{(-1)^\ell(\gamma-\ell)_k 
t^{\alpha+k}}{(1-\gamma)_\ell\,
\Gamma(2\alpha+\ell+k+1)(\alpha+k+\frac12)}.
\end{eqnarray}
Special care must be taken when $\gamma\in{\mathbb Z}$.
\end{cor}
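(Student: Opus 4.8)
The plan is to start from a standard generating function for Gegenbauer polynomials, rewrite both the generating kernel and the summands in terms of the Ferrers function of the first kind, and then extract the integral by orthogonality. Concretely, I would invoke the generating function $(1+t^2-2xt)^{-\beta}=\sum_{n=0}^\infty C_n^{\beta}(x)\,t^n$ (the relation labelled \eqref{genfunGeg} in the paper), valid for $|x|,|t|<1$. The idea is to specialize $\beta$ so that the coefficient polynomials $C_n^{\beta}(x)$ become exactly the Gegenbauer polynomials $C_n^{\alpha+\ell+\frac12}(x)$ appearing implicitly in ${\sf P}_{k+\alpha}^{-\ell-\alpha}(x)$ via \eqref{FerPnlammlan}; this forces $\beta=\alpha+\ell+\tfrac12$. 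Then each term $C_n^{\alpha+\ell+\frac12}(x)\,t^n$ is converted, using the Corollary \eqref{FerPnlammlan} with $\lambda+\tfrac12\mapsto\alpha+\ell+\tfrac12$, i.e. $\lambda\mapsto\alpha+\ell$, into a multiple of ${\sf P}_{n+\alpha+\ell}^{-\alpha-\ell}(x)/\big({\sf P}_{\alpha+\ell}^{-\alpha-\ell}(x)\big)$ times $t^n$.

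Next I would handle the left-hand kernel. The factor $(1+t^2-2xt)^{(\alpha-\gamma+\ell)/2}\,{\sf P}_{\ell+\alpha-\gamma}^{-\ell-\alpha}\!\big((1-xt)/\sqrt{1+t^2-2xt}\big)$ should be recognized as a known generating-function identity: there is a classical formula expressing $\sum_n \frac{(\gamma)_n}{(2\beta)_n}C_n^{\beta}(x)t^n$ (or a similarly weighted sum) in closed form as a power of $(1+t^2-2xt)$ times a Ferrers/Gegenbauer function of the shifted argument $(1-xt)/\sqrt{1+t^2-2xt}$. I would identify the precise such identity (it is the one feeding \eqref{genfunGeg} with an extra Pochhammer weight, essentially a limiting or contiguous case of Gegenbauer's addition-type generating function), so that the left kernel equals $\sum_{n} c_n(\gamma,\alpha,\ell)\, C_n^{\alpha+\ell+\frac12}(x)\, t^n$ for explicit coefficients $c_n$ built from $(\gamma-\ell)_n/(1-\gamma)_\ell$-type ratios and $(-1)^\ell$. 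Converting these $C_n^{\alpha+\ell+\frac12}(x)$ to ${\sf P}_{n+\alpha+\ell}^{-\ell-\alpha}(x)$ as above, the integrand on the left becomes $\sum_n c_n t^n\,(\text{const})\,{\sf P}_{n+\alpha+\ell}^{-\ell-\alpha}(x)\,{\sf P}_{k+\alpha}^{-\ell-\alpha}(x)$ — but note the second Ferrers function has order $-\ell-\alpha$ and degree parameter $k+\alpha$, which matches the first only when the order parameter $\mu=\ell+\alpha$ is used and the degree is read off correctly; here I would use that ${\sf P}_{k+\alpha}^{-\ell-\alpha}$ is, up to the factor ${\sf P}_{\alpha+\ell}^{-\alpha-\ell}(x)(1-x^2)^{\cdots}$, the polynomial $C_{k-\ell}^{\alpha+\ell+\frac12}$, or more cleanly just apply Theorem \ref{mainorthogthm} directly with $\mu=\ell+\alpha$ to the pair ${\sf P}_{n+\ell+\alpha}^{-\ell-\alpha}$, ${\sf P}_{k+\ell+\alpha}^{-\ell-\alpha}$ after reindexing $k\mapsto k-\ell$ in the statement.

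Then the computation closes: integrating term by term over $x\in(-1,1)$ and applying the orthogonality relation \eqref{orthogonalityPmk} with $\mu=\ell+\alpha$ kills every term in the $n$-sum except the one matching the second factor, leaving a single monomial $c_{?}\,t^{\alpha+k}$ times the normalization constant $\dfrac{(\text{degree})!}{\Gamma(2\mu+\text{degree}+1)(\mu+\text{degree}+\frac12)}$. Bookkeeping the indices — the matching term pairs the degree of the first Ferrers function with $k+\alpha$, which pins the surviving $n$ and produces the power $t^{\alpha+k}$, the sign $(-1)^\ell$, the ratio $(\gamma-\ell)_k/(1-\gamma)_\ell$, and the denominator $\Gamma(2\alpha+\ell+k+1)(\alpha+k+\frac12)$ — I would verify that the constants $c_n$ and the orthogonality normalization combine to exactly the claimed right-hand side. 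The conditions $\alpha+k+\frac12\ne0$ and $\Re\alpha>-\ell-1$ are precisely the hypotheses of Theorem \ref{mainorthogthm} with $\mu=\ell+\alpha$ (so that $\mu+k+\frac12\ne0$ and $\Re\mu>-1$, hence integrability at $x=\pm1$), and the caveat about $\gamma\in\mathbb Z$ reflects poles/indeterminacies in $(1-\gamma)_\ell$ or $(\gamma-\ell)_k$ that must be resolved by a limiting argument.

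The main obstacle is the middle step: correctly identifying the closed-form generating function that represents the left-hand kernel $(1+t^2-2xt)^{(\alpha-\gamma+\ell)/2}{\sf P}_{\ell+\alpha-\gamma}^{-\ell-\alpha}\!\big((1-xt)/\sqrt{1+t^2-2xt}\big)$ as a Gegenbauer series $\sum_n c_n C_n^{\alpha+\ell+\frac12}(x)t^n$ with the exact coefficients $c_n$, and then tracking all the Pochhammer/gamma prefactors through the conversions \eqref{FerPnlammlan} and \eqref{orthogonalityPmk} without sign or index errors. Everything else — term-by-term integration (justified by $|t|<1$ and uniform convergence on compact subsets), the orthogonality collapse, and the final simplification — is routine once that identification is in hand.
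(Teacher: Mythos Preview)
Your approach is correct and essentially identical to the paper's. One clarification: the relation labelled \eqref{genfunGeg} is \emph{not} the standard generating function $(1+t^2-2xt)^{-\beta}=\sum_n C_n^\beta(x)t^n$ but rather the weighted identity
\[
(1-xt)^{-\gamma}\hyp21{\frac{\gamma}{2},\frac{\gamma+1}{2}}{\lambda+\frac12}{\frac{t^2(x^2-1)}{(1-xt)^2}}
=\sum_{n=0}^\infty\frac{(\gamma)_n}{(2\lambda)_n}C_n^\lambda(x)\,t^n,
\]
which, after converting the ${}_2F_1$ on the left via \eqref{FerPnummu} into the Ferrers function ${\sf P}_{\ell+\alpha-\gamma}^{-\ell-\alpha}\!\big((1-xt)/\sqrt{1+t^2-2xt}\big)$, is precisely the closed form you flag as your ``main obstacle.'' The paper's proof starts directly from this, reindexes $n\mapsto k-\ell$, converts $C_n^\lambda$ to ${\sf P}_{n+\alpha+\ell}^{-\alpha-\ell}$ via \eqref{relFerPGeg}, and applies the orthogonality \eqref{orthogonalityPmk} with $\mu=\alpha+\ell$ exactly as you outline.
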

}

\boro{
\begin{proof}
Start with the generating function $\lambda,\gamma\in{\mathbb C}$,
\cite[(9.8.33)]{NIST:DLMF}
\begin{equation}
(1-xt)^{-\gamma}
\hyp21{\frac{\gamma}{2},\frac{\gamma+1}{2}}{\lambda+\frac12}
{\frac{t^2(x^2-1)}{(1-xt)^2}}=\sum_{n=0}^\infty
\frac{(\gamma)_n}{(2\lambda)_n}
C_n^\lambda(x) t^n,
\label{genfunGeg}
\end{equation}
and making the replacement $n\mapsto k-\ell$, 
$k\ge \ell\ge 0$ converts the
sum over all $k\ge \ell\in{\mathbb N}_0$.
Then 
convert the Gegenbauer polynomial to a Ferrers function
of the first kind using \eqref{relFerPGeg}.
Multiply the resulting sum by $(1-x^2)^{\frac{\alpha
+\ell}{2}}{\sf P}_{k'+\alpha}^{-\ell-\alpha}(x)$ 
and integrate both sides of the equation
over $(-1,1)$ 
using the orthogonality relation 
\eqref{orthogonalityPmk} produces
a definite integral of the Gauss
hypergeometric function multiplied
by a Ferrers function of the first kind. One can then
convert the Gauss hypergeometric
function to a Ferrers function
of the first kind using 
\eqref{FerPnlammlan}
which completes the proof.
\end{proof}
}

\noindent\boro{
The following result follows from an expansion
over Gegenbauer polynomials 
\cite[Corollary 1]{Cohl12pow} which generalizes
many classical expansions including the generating 
function for Legendre polynomials, Heine's formula 
and Heine's reciprocal square root Fourier cosine 
series expansion \cite[Figure 1]{CohlCostasWakhare2019}.
}

\boro{
\begin{cor}
Let $\nu,\mu,x\in\mathbb C$ such that if $z\in\mathbb 
C\setminus(-\infty,1]$ lies on any ellipse with foci at 
$\pm 1$ then $x$ can lie on any point interior to that 
ellipse. Then
\begin{eqnarray}
&&\hspace{-0.7cm}\frac{ (1\!-\!x^2)^{\frac{\mu}{2}} 
(z^2\!-\!1)^{\frac{\nu-\mu-1}{2}}}{(z\!-\!x)^\nu}
=\sum_{n=0}^\infty
\frac{(2\mu+2n+1)(\nu)_n\Gamma(2\mu+n+1)}{n!}
{\sf P}_{n+\mu}^{-\mu}(x)\biQ_{n+\mu}^{\nu-\mu-1}(z).
\label{sumzmxnuFer}
\end{eqnarray}
\end{cor}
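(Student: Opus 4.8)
The plan is to obtain \eqref{sumzmxnuFer} by translating a known Gegenbauer expansion into Ferrers/Legendre language using the dictionary already assembled in the excerpt, namely \eqref{CnmuFerP} (Gegenbauer polynomial in terms of ${\sf P}_{n+\mu}^{-\mu}$) together with the definition \eqref{Qdefn} of $\biQ_\nu^\mu$. The starting point is the Gegenbauer expansion of the power $(z-x)^{-\nu}$ with a $(z^2-1)$-prefactor from \cite[Corollary 1]{Cohl12pow} (alluded to in the sentence preceding the corollary); it has the schematic form $(z-x)^{-\nu}=\sum_{n=0}^\infty c_n(\nu,\mu)\,C_n^{\mu+\frac12}(x)\,\biQ_{n+\mu}^{\nu-\mu-1}(z)$ up to explicit prefactors in $(1-x^2)$ and $(z^2-1)$. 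The region of validity stated in the corollary (an $x$-point interior to every confocal ellipse through $z$) is exactly the region of validity of that Gegenbauer expansion, so nothing new has to be proved about convergence.

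First I would write down the cited Gegenbauer expansion precisely, with its prefactors and its coefficient, which will be a ratio of Pochhammer symbols and gamma functions involving $\nu$ and $\mu$. Next I would substitute \eqref{CnmuFerP} in the form
\[
C_n^{\mu+\frac12}(x)=\frac{\sqrt{\pi}\,\Gamma(2\mu+n+1)}{2^{\mu}\Gamma(\mu+\frac12)\,n!}\,\frac{{\sf P}_{n+\mu}^{-\mu}(x)}{(1-x^2)^{\frac{\mu}{2}}},
\]
obtained by setting $\mu\mapsto\mu+\frac12$ in \eqref{CnmuFerP} (so that $\mu-\frac12\mapsto\mu$ and $\frac12-\mu\mapsto-\mu$). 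This introduces a factor $(1-x^2)^{-\mu/2}$ that must combine with the $(1-x^2)$-prefactor of the Gegenbauer expansion to leave exactly $(1-x^2)^{\mu/2}$ on the left-hand side of \eqref{sumzmxnuFer}; I would check this bookkeeping explicitly. The factor $2\mu+2n+1$ in the stated coefficient is the image of the Gegenbauer index $n+\mu+\frac12$ that typically appears in such normalized expansions, and the $\Gamma(2\mu+n+1)/n!$ is precisely what \eqref{CnmuFerP} supplies, so the target coefficient is clearly consistent with this route.

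The routine but error-prone part — and the main obstacle — is the gamma-function algebra: reconciling the coefficient in \cite[Corollary 1]{Cohl12pow} (which is stated with its own normalization of the Legendre function of the second kind, typically the Hobson $Q_\nu^\mu$ or the Olver $\biQ_\nu^\mu$) with the factor $(\nu)_n$ appearing in \eqref{sumzmxnuFer}. Here I would use the normalization convention recorded in the excerpt, $Q_\nu^\mu(z)=e^{i\pi\mu}\Gamma(\nu+\mu+1)\biQ_\nu^\mu(z)$, to pass between the two; the index shift $\mu\mapsto\nu-\mu-1$ in the order of $\biQ$ means one must track a factor $\Gamma(2\mu+n+1)$ coming from $\Gamma((n+\mu)+(\nu-\mu-1)+1)=\Gamma(n+\nu)$ — wait, more precisely $\Gamma(n+\mu+\nu-\mu-1+1)=\Gamma(n+\nu)=\Gamma(\nu)(\nu)_n$, which is exactly the source of the $(\nu)_n$ in the stated coefficient. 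So the proof amounts to: (i) quote the expansion, (ii) apply \eqref{CnmuFerP} with $\mu\mapsto\mu+\frac12$, (iii) convert the second-kind Legendre normalization via the $\biQ$/$Q$ relation, and (iv) simplify the resulting product of gamma functions to the closed form displayed. After the simplification in (iv) one completes the proof; the convergence region is inherited verbatim from \cite[Corollary 1]{Cohl12pow}.
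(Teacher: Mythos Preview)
Your proposal is correct and follows essentially the same route as the paper: start from the Gegenbauer expansion of $(z-x)^{-\nu}$ in \cite[Corollary~1]{Cohl12pow}, replace $C_n^\mu$ by Ferrers functions via \eqref{CnmuFerP}, and shift $\mu\mapsto\mu+\tfrac12$. One simplification: the cited expansion is already stated in terms of the Olver-normalized $\biQ$ and already carries the factor $(\nu)_n$ explicitly, so your step~(iii) (passing from Hobson $Q$ to $\biQ$ to generate $(\nu)_n$) is unnecessary --- the $(\nu)_n$ is present from the outset, and the only bookkeeping needed is the gamma/power-of-$2$ algebra from \eqref{CnmuFerP}.
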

}
\boro{
\begin{proof}
Start with 
\cite[Corollary 1]{Cohl12pow}
\[
\frac{1}{(z-x)^\nu}=\frac{2^{\mu+\frac12}\Gamma(\mu)}
{\sqrt{\pi}\,(z^2-1)^{\frac{\nu-\mu}{2}-\frac14}}
\sum_{n=0}^\infty (n+\mu)(\nu)_n
\biQ_{n+\mu-\frac12}^{\nu-\mu-\frac12}(z)
C_n^\mu(x),
\]
and convert the Gegenbauer polynomial to a 
Ferrers function of the first kind using
\eqref{CnmuFerP}, then set $\mu\mapsto\mu+\frac12$.
This completes the proof.
\end{proof}
}

\boro{
\begin{cor} 
Let $n,l\in \mathbb N_0$, $n\ge l$, $\nu,\mu\in{\mathbb C}$, 
such that $z\in{\mathbb C}\setminus(-\infty,1]$. Then
\begin{eqnarray}
&&\hspace{-1cm}\int_{-1}^1 
\frac{{\sf P}_{n+\mu}^{-\mu}(x)}
{(z-x)^\nu}(1-x^2)^\frac{\mu}{2}\,{\mathrm d}x=
\frac{
2(\nu)_n\biQ_{n+\mu}^{\nu-\mu-1}(z)
}{(z^2-1)^\frac{\nu-\mu-1}{2}}.
\label{intFerPzmxnua}
\end{eqnarray}
\end{cor}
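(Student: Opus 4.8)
The plan is to obtain \eqref{intFerPzmxnua} directly from the expansion \eqref{sumzmxnuFer} by multiplying by an appropriate Ferrers function and integrating termwise against the orthogonality relation \eqref{orthogonalityPmk}. First I would rewrite \eqref{sumzmxnuFer} with the argument $x$ replaced by the integration variable, collecting the factor $(1-x^2)^{\mu/2}$ onto the left-hand side so that the identity reads
\[
\frac{(1-x^2)^{\frac{\mu}{2}}}{(z-x)^\nu}
=\frac{(z^2-1)^{\frac{\nu-\mu-1}{2}}}{(1-x^2)^{\frac{\mu}{2}}}
\sum_{k=0}^\infty
\frac{(2\mu+2k+1)(\nu)_k\Gamma(2\mu+k+1)}{k!}
{\sf P}_{k+\mu}^{-\mu}(x)\biQ_{k+\mu}^{\nu-\mu-1}(z).
\]
Actually it is cleaner to keep the $(1-x^2)^{\mu/2}$ on the left as it appears in the integrand of \eqref{intFerPzmxnua}: multiply both sides of \eqref{sumzmxnuFer} by ${\sf P}_{n+\mu}^{-\mu}(x)$ and integrate over $x\in(-1,1)$. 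On the left one gets exactly the integral $\int_{-1}^1 {\sf P}_{n+\mu}^{-\mu}(x)(z-x)^{-\nu}(1-x^2)^{\mu/2}\,{\mathrm d}x$ that we want to evaluate. On the right, the $(1-x^2)^{\mu/2}$ from \eqref{sumzmxnuFer} combines with the ${\sf P}_{n+\mu}^{-\mu}(x)$ I inserted to give $\int_{-1}^1 {\sf P}_{k+\mu}^{-\mu}(x){\sf P}_{n+\mu}^{-\mu}(x)\,{\mathrm d}x$, which by \eqref{orthogonalityPmk} equals $\frac{k!}{\Gamma(2\mu+k+1)(\mu+k+\frac12)}\delta_{k,n}$.

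Carrying out the termwise integration, only the $k=n$ term of the series survives, and the surviving coefficient is
\[
\frac{(2\mu+2n+1)(\nu)_n\Gamma(2\mu+n+1)}{n!}\,(z^2-1)^{\frac{\nu-\mu-1}{2}}\biQ_{n+\mu}^{\nu-\mu-1}(z)\cdot\frac{n!}{\Gamma(2\mu+n+1)(\mu+n+\frac12)}.
\]
The factors $\Gamma(2\mu+n+1)$ and $n!$ cancel, and $(2\mu+2n+1)=2(\mu+n+\frac12)$ cancels against $(\mu+n+\frac12)$ leaving a factor of $2$; this yields precisely $\frac{2(\nu)_n\biQ_{n+\mu}^{\nu-\mu-1}(z)}{(z^2-1)^{(\nu-\mu-1)/2}}$, matching the right-hand side of \eqref{intFerPzmxnua}. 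The hypothesis $n\ge l$ in the statement is inherited from whatever range condition is needed to keep the integral convergent and the orthogonality valid; I would check that $\Re\mu>-1$ (so that the $x=\pm1$ singularities of the integrand are integrable, as in Theorem~\ref{mainorthogthm}) is implied or should be added, and that $z\in\mathbb C\setminus(-\infty,1]$ places $x\in(-1,1)$ in the interior of an ellipse with foci $\pm1$ on which $z$ sits, so that the hypothesis of \eqref{sumzmxnuFer} is met for every $x$ in the integration range.

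The main obstacle is the justification of termwise integration: one must argue that the series \eqref{sumzmxnuFer}, after multiplication by the bounded (on compacta) function ${\sf P}_{n+\mu}^{-\mu}(x)$, converges uniformly enough on $(-1,1)$ — or is dominated — to interchange sum and integral. This follows from the fact that \eqref{sumzmxnuFer} converges uniformly on compact $x$-subsets of the interior of the relevant ellipse (it is, after conversion, an ultraspherical expansion of an analytic function), together with an integrable bound near $x=\pm1$ coming from the $(1-x^2)^{\mu/2}$ weight when $\Re\mu>-1$; I would remark on this rather than belabor it. Everything else is the bookkeeping of Pochhammer and gamma factors indicated above, which is routine.
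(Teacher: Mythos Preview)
Your approach is exactly the paper's: multiply \eqref{sumzmxnuFer} by ${\sf P}_{n+\mu}^{-\mu}(x)$, integrate over $(-1,1)$, and apply the orthogonality relation \eqref{orthogonalityPmk} so that only the $k=n$ term survives. One small bookkeeping slip to fix in your write-up: the factor $(z^2-1)^{(\nu-\mu-1)/2}$ sits on the \emph{left} side of \eqref{sumzmxnuFer}, so after integrating it divides (rather than multiplies) the surviving right-hand term --- with that corrected your cancellation and final formula are exactly right.
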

}
\boro{
\begin{proof}
Starting with \eqref{sumzmxnuFer}, multiplying both sides 
of the equation by 
${\sf P}_{n+\mu}^{-\mu}(x)$ and integrating over $(-1,1)$ using the 
orthogonality relation \eqref{orthogonalityPmk} completes the proof.
\end{proof}
}

\medskip
\noindent \boro{
The following result follows from an integral
derived in \cite{AskeyRazban72} by Askey and Razban 
for the Jacobi polynomials, but then we specialize to
Gegenbauer polynomials and then re-express using
the Ferrers function of the first kind.
}

\boro{
\begin{cor}
Let $n\in \mathbb N_0$, $\lambda,\gamma\in\mathbb C$. Then
\begin{equation}
\int_{-1}^1(1-x)^{\frac{\lambda}{2}-\gamma}(1+x)^{\frac{\lambda}{2}}
{\sf P}_{n+\lambda}^{-\lambda}(x)
\,{\mathrm d}x
=
\frac{2^{\lambda-\gamma+1}\Gamma(\lambda-\gamma+1)\Gamma(\gamma+n)}
{\Gamma(\gamma)\Gamma(2\lambda-\gamma+n+2)}.
\label{thisintFerP}
\end{equation}
\end{cor}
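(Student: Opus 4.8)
The plan is to unwind \eqref{thisintFerP} to a classical weighted Jacobi integral, the passage being exactly the chain ${\sf P}\to C\to P^{(\lambda,\lambda)}$ run backwards. First I would combine Corollary \eqref{FerPnlammlan} with ${\sf P}_\lambda^{-\lambda}(x)=(1-x^2)^{\lambda/2}/(2^\lambda\Gamma(\lambda+1))$ and the Gegenbauer--Jacobi relation \eqref{GegJac} (with $\lambda\mapsto\lambda+\frac12$) to obtain
\[
{\sf P}_{n+\lambda}^{-\lambda}(x)
=\frac{n!}{(2\lambda+1)_n}\,{\sf P}_\lambda^{-\lambda}(x)\,C_n^{\lambda+\frac12}(x)
=\frac{n!\,(1-x^2)^{\frac{\lambda}{2}}}{2^\lambda\,\Gamma(\lambda+1)\,(\lambda+1)_n}\,P_n^{(\lambda,\lambda)}(x),
\]
the factor $(2\lambda+1)_n$ dropping out. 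Inserting this into the left-hand side of \eqref{thisintFerP} and using $(1-x)^{\frac{\lambda}{2}-\gamma}(1+x)^{\frac{\lambda}{2}}(1-x^2)^{\frac{\lambda}{2}}=(1-x)^{\lambda-\gamma}(1+x)^{\lambda}$ reduces the problem to
\[
\int_{-1}^1(1-x)^{\frac{\lambda}{2}-\gamma}(1+x)^{\frac{\lambda}{2}}{\sf P}_{n+\lambda}^{-\lambda}(x)\,{\mathrm d}x
=\frac{n!}{2^\lambda\Gamma(\lambda+1)(\lambda+1)_n}\int_{-1}^1(1-x)^{\lambda-\gamma}(1+x)^{\lambda}P_n^{(\lambda,\lambda)}(x)\,{\mathrm d}x.
\]

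Next I would evaluate the Jacobi integral on the right. This is the weighted Jacobi integral of Askey and Razban \cite{AskeyRazban72}; in generic parameters it reads
\[
\int_{-1}^1(1-x)^{\rho}(1+x)^{\beta}P_n^{(\alpha,\beta)}(x)\,{\mathrm d}x
=\frac{2^{\rho+\beta+1}\Gamma(\rho+1)\Gamma(\beta+n+1)\Gamma(\alpha-\rho+n)}{n!\,\Gamma(\alpha-\rho)\,\Gamma(\rho+\beta+n+2)},
\]
valid for $\Re\rho>-1$, $\Re\beta>-1$ and otherwise by continuation. A self-contained derivation is immediate: expand $P_n^{(\alpha,\beta)}$ as a terminating ${}_2F_1$ in the variable $\frac{1+x}{2}$ and integrate term-by-term against the Beta kernel; the $(\beta+1)_k$ in the numerator cancels the $(\beta+1)_k$ in the denominator of the ${}_2F_1$, and the leftover terminating ${}_2F_1$ at argument $1$ is summed by Chu--Vandermonde. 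Taking $\alpha=\beta=\lambda$ and $\rho=\lambda-\gamma$ (so $\alpha-\rho=\gamma$ and $\rho+\beta+n+2=2\lambda-\gamma+n+2$) gives
\[
\int_{-1}^1(1-x)^{\lambda-\gamma}(1+x)^{\lambda}P_n^{(\lambda,\lambda)}(x)\,{\mathrm d}x
=\frac{2^{2\lambda-\gamma+1}\Gamma(\lambda-\gamma+1)\Gamma(\lambda+n+1)\Gamma(\gamma+n)}{n!\,\Gamma(\gamma)\,\Gamma(2\lambda-\gamma+n+2)}.
\]

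It remains only to multiply by the prefactor $\frac{n!}{2^\lambda\Gamma(\lambda+1)(\lambda+1)_n}$. Since $(\lambda+1)_n\Gamma(\lambda+1)=\Gamma(\lambda+n+1)$, the $n!$ and $\Gamma(\lambda+n+1)$ cancel and $2^{2\lambda-\gamma+1}/2^\lambda=2^{\lambda-\gamma+1}$, leaving exactly the right-hand side of \eqref{thisintFerP}. All of this is valid a priori where the original integral converges, namely $\Re\lambda>-1$ and $\Re(\lambda-\gamma)>-1$; the stated identity for general $\lambda,\gamma\in\mathbb C$ then follows by analytic continuation in the parameters, with the usual reservations when $\gamma$ or $2\lambda-\gamma+n+2$ hits a non-positive integer. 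There is no real obstacle here beyond keeping track of the gamma factors; the one spot to double-check is the pole cancellation in the final simplification.
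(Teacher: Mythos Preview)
Your proof is correct and follows essentially the same route as the paper's: both reduce the Ferrers integral to the Askey--Razban Jacobi integral via the relation ${\sf P}_{n+\lambda}^{-\lambda}=(1-x^2)^{\lambda/2}\times(\text{symmetric Jacobi/Gegenbauer})$. The only cosmetic difference is that the paper first states the intermediate Gegenbauer integral (with parameters $\lambda-\tfrac12$) and then shifts $\lambda\mapsto\lambda+\tfrac12$, whereas you go straight to $P_n^{(\lambda,\lambda)}$; the key ingredient and the bookkeeping are the same.
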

}

\boro{
\begin{proof}
Start with \cite{AskeyRazban72} 
(see also \cite[(16.4.2)]{ErdelyiTII})
\[
\int_{-1}^{1}P_n^{(\alpha,\beta)}(x)\
(1-x)^{\alpha-\gamma}(1+x)^{\beta}\,{\mathrm d}x
=\frac{2^{\alpha+\beta-\gamma+1}\Gamma(\alpha-\gamma+1)
\Gamma(\beta+n+1)\Gamma(\gamma+n)}
{n!\Gamma(\gamma)\Gamma(\alpha+\beta-\gamma+n+2)},
\]
then replace $\alpha,\beta\mapsto\lambda-\frac12$
and using \eqref{GegJac}, then \eqref{CnmuFerP}
produces the following integral over Gegenbauer
polynomials
\begin{equation}
\int_{-1}^1(1-x)^{\lambda-\gamma-\frac12}(1+x)^{\lambda-\frac12}
C_n^\lambda(x)\,{\mathrm d}x=
\frac{\sqrt{\pi}\Gamma(\lambda-\gamma+\frac12)\Gamma(2\lambda+n)
\Gamma(\gamma+n)}{2^{\gamma-1}n!\Gamma(\lambda)\Gamma(\gamma)
\Gamma(2\lambda-\gamma+n+1)},
\end{equation}
using \eqref{CnmuFerP} and replacing 
$\lambda\mapsto\lambda+\frac12$ completes
the proof.
\end{proof}
}

\boro{
\begin{rem}
The above result is equivalent to
the following expansion
\begin{equation}
\sum_{n=0}^\infty
\frac{(\lambda+n)(\gamma)_n}
{(2\lambda-\gamma+1)_n}
C_n^\lambda(x)
=\frac{
\sqrt{\pi}\Gamma(2\lambda-\gamma+1)(1-x)^{-\gamma}}
{2^{2\lambda-\gamma}\gamma(\lambda)\Gamma(\lambda-\gamma+\frac12)},
\end{equation}
which we recently proved in 
\cite[Corollary 5.12]{CohlCostasWakhare2019}.
\end{rem}
}

\subsection{\boro{Asymptotic expansions
and Mehler-Heine relations}}

\noindent \boro{
The asymptotic expansion of
the Ferrers function of the 
first kind which is
related to the limit transition
\cite[(18.6.4)]{NIST:DLMF}
\begin{equation}
\lim_{\lambda\to\infty}
\frac{C_n^\lambda(x)}
{C_n^\lambda(1)}=n!\lim_{\lambda\to\infty}
\frac{C_n^\lambda(x)}
{(2\lambda)_n}
=x^n,
\label{Gegxn}
\end{equation}
since \cite[Table 18.6.1]{NIST:DLMF}
$
C_n^\lambda(1)=\frac{(2\lambda)_n}{n!},
$
is given as follows.
}

\boro{
\begin{cor}
Let $n\in \mathbb N_0$, $x\in(-1,1)$. Then
\begin{equation}
\hspace{-0.4cm}\frac{2^\lambda\Gamma(\lambda\!+\!1)}
{(1\!-\!x^2)^\frac{\lambda}{2}}
{\sf P}_{n+\lambda}^{-\lambda}(x)
= x^n
\left(1\!-\!
\frac{n(n\!-\!1) (1\!-\!x^2)}
{4x^2\lambda}
\!+\!\frac{n(n\!-\!1)(n\!-\!2)(n\!-\!3)
(1\!-\!x^2)^2}
{32x^4\lambda^2}
\!+\!
{\mathcal O}\left(\frac{1}{\lambda^3
}\right)
\right),
\end{equation}
as $\lambda\to\infty$.
\end{cor}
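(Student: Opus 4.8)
The plan is to collapse everything onto the single terminating Gauss hypergeometric representation supplied by \eqref{FerPnlammlan}, in which the only $\lambda$-dependence sits in the bottom parameter. That identity gives
\[
\frac{2^\lambda\Gamma(\lambda+1)}{(1-x^2)^{\lambda/2}}{\sf P}_{n+\lambda}^{-\lambda}(x)
= x^n\,\hyp21{-\tfrac n2,\tfrac{1-n}{2}}{1+\lambda}{1-\tfrac1{x^2}},
\]
and since one of $-\tfrac n2,\tfrac{1-n}{2}$ always lies in $-\mathbb N_0$, the series is in fact a finite sum $\sum_{k=0}^{\lfloor n/2\rfloor}\frac{(-n/2)_k(\frac{1-n}{2})_k}{k!}\,(1-x^{-2})^k/(1+\lambda)_k$. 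Thus the $x$-part of each term is completely explicit, the whole $\lambda$-dependence being carried by the factors $1/(1+\lambda)_k$. (Letting $\lambda\to\infty$ kills every term with $k\ge1$, recovering the Mehler–Heine limit \eqref{Gegxn}, so the corollary is just a quantitative refinement of that.)

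Next I would expand each $1/(1+\lambda)_k$ asymptotically. Writing $(1+\lambda)_k=\lambda^k\prod_{j=1}^k(1+j/\lambda)$ yields $1/(1+\lambda)_k=\lambda^{-k}\bigl(1-\tfrac{k(k+1)}{2\lambda}+\mathcal O(\lambda^{-2})\bigr)$. Substituting this for each $k$, using the Pochhammer identity $(-\tfrac n2)_k(\tfrac{1-n}{2})_k=4^{-k}\,n(n-1)\cdots(n-2k+1)=4^{-k}n!/(n-2k)!$ (so $k=1$ contributes the factor $\tfrac14 n(n-1)$ and $k=2$ the factor $\tfrac1{16}n(n-1)(n-2)(n-3)$), together with $1-x^{-2}=-(1-x^2)/x^2$, one reads off the coefficient of each power of $1/\lambda$: $k=0$ gives the constant $1$, $k=1$ feeds orders $\lambda^{-1},\lambda^{-2},\dots$, $k=2$ feeds orders $\lambda^{-2},\lambda^{-3},\dots$, and all $k\ge3$ only enter at order $\lambda^{-3}$ and beyond. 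Collecting through order $\lambda^{-2}$ and bounding the remaining finitely many $\lambda^{-k}$, $k\ge3$, contributions by $\mathcal O(\lambda^{-3})$ produces the stated expansion.

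The only real obstacle here is the bookkeeping in that final collection step: a given negative power of $\lambda$ receives contributions not just from the matching index $k$ but also from the subleading parts of $1/(1+\lambda)_j$ for all smaller $j$, so one must assemble the coefficients from the full expansion of each $1/(1+\lambda)_k$, not merely its leading $\lambda^{-k}$ term. A convenient way to keep this clean is to expand in powers of $1/(\lambda+1)$ rather than $1/\lambda$, since then $(1+\lambda)_1=\lambda+1$ exactly and the index-$k$ term feeds only into order $(\lambda+1)^{-k}$; if the $1/\lambda$ normalization is wanted one re-expands the resulting finite series at the end, which is routine. Apart from this the argument is an entirely elementary, terminating computation, and the error term $\mathcal O(\lambda^{-3})$ is immediate.
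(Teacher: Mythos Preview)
Your proposal is correct and follows essentially the same route as the paper: both reduce everything to the terminating representation \eqref{FerPnlammlan} and then expand the resulting finite ${}_2F_1$ sum (equivalently \eqref{DLMFhypasymp}) in a Laurent series about $\lambda=\infty$. Your write-up simply makes explicit the mechanics (the identity $(-n/2)_k((1-n)/2)_k=4^{-k}n!/(n-2k)!$ and the expansion of $1/(1+\lambda)_k$) that the paper leaves to the reader.
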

}

\boro{
\begin{proof}
Start with
\eqref{Gegxn}
then apply \eqref{CnmuFerP}
with the replacement 
$\lambda\mapsto\lambda+\frac12$.
This gives the zeroth order 
term. In order to compute 
an arbitrary asymptotic expansion
near $\lambda=\infty$,
use \eqref{FerPnlammlan},
expanding the resulting
terminating expansion 
using 
a Laurent series 
near $\lambda=\infty$ with
\eqref{DLMFhypasymp}.
This completes the proof.
\end{proof}
}

\medskip
\boro{
\begin{thm}
\label{thm310}
Let $n\in \mathbb N_0$, $x\in(-1,1)$. Then
\begin{eqnarray}
&&\hspace{-0.5cm}\frac{\sqrt{\pi}\,\Gamma(2\lambda+n+1)}
{2^{\lambda}(\lambda+\frac12)^{\frac{n}{2}}
\Gamma(\lambda+\frac12)(1-\frac{x^2}{\lambda})^{\frac{\lambda}{2}}}
{\sf P}_{n+\lambda}^{-\lambda}
\left(x\left(\lambda+\tfrac12\right)^{-\frac12}\right)
\nonumber \\ &&=(2x)^n\hyp21{-\frac{n}{2},\frac{1-n}{2}}
{\lambda+1}{-\frac{\lambda}{x^2}}
\left(1+\frac{n(n+1)}{4\lambda}
+\frac{n(n-1)(n+1)(3n+2)}{96\lambda^2}
+{\mathcal O}\left(\frac{1}{\lambda^3}\right)\right),
\end{eqnarray}
as $\lambda\to\infty$.
\end{thm}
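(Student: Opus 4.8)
The plan is to imitate the proof of the preceding corollary: convert the Ferrers function on the left into a terminating Gauss hypergeometric series via \eqref{FerPnlammlan}, consolidate the gamma‑function prefactor, and then expand what remains as a Laurent series in $1/\lambda$. Concretely, I would apply \eqref{FerPnlammlan} with $\mu\mapsto\lambda$ and $\nu\mapsto n+\lambda$ at the scaled argument $w=x(\lambda+\tfrac12)^{-\frac12}$ (it is arguably cleaner to take the argument in the form $x(\lambda+x^2)^{-\frac12}$, which makes the identity below exact; the $O(1/\lambda)$ difference is harmless). This writes ${\sf P}_{n+\lambda}^{-\lambda}(w)$ as an elementary prefactor times a terminating ${}_2F_1\!\bigl(-\tfrac n2,\tfrac{1-n}2;\lambda+1;\,z\bigr)$ whose argument $z=1-1/w^2$ is, up to a quantity that is $O(1/\lambda)$ relative to the leading term (and hence absorbable into the asserted correction series and error term, after a Pfaff‑type rewriting if one keeps the literal $w$), the argument $-\lambda/x^2$ already displayed on the right. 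Note that as $\lambda\to\infty$ this ${}_2F_1$ tends to ${}_2F_0\bigl(-\tfrac n2,\tfrac{1-n}2;-;-1/x^2\bigr)=H_n(x)/(2x)^n$ by \eqref{Hermite} and \eqref{limgenhyp}, so the statement is a Mehler--Heine limit refined by an explicit correction.

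Next I would consolidate the constant. Upon substituting \eqref{FerPnlammlan} into the left‑hand side, the power factors $(1-w^2)^{\lambda/2}$ and $w^{\,n}$ coming from \eqref{FerPnlammlan} cancel against the corresponding factors in the normalising prefactor, and Legendre's duplication formula $\Gamma(\lambda+\tfrac12)\Gamma(\lambda+1)=2^{-2\lambda}\sqrt{\pi}\,\Gamma(2\lambda+1)$ collapses $\sqrt{\pi}\,\Gamma(2\lambda+n+1)\big/\bigl(2^{2\lambda}\Gamma(\lambda+\tfrac12)\Gamma(\lambda+1)\bigr)$ into the shifted factorial $(2\lambda+1)_n=\Gamma(2\lambda+n+1)/\Gamma(2\lambda+1)$. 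What survives multiplying ${}_2F_1\!\bigl(-\tfrac n2,\tfrac{1-n}2;\lambda+1;-\lambda/x^2\bigr)$ is then $(2x)^n$ times the rational function $\dfrac{(2\lambda+1)_n}{2^n\lambda^n}=\displaystyle\prod_{j=1}^{n}\Bigl(1+\frac{j}{2\lambda}\Bigr)$ (possibly modulo a further factor $1+O(1/\lambda^3)$ if the literal argument $x(\lambda+\tfrac12)^{-1/2}$ is retained).

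Finally I would expand this product as a Laurent series near $\lambda=\infty$ (as in the proof of the preceding corollary, using \eqref{DLMFhypasymp}). The coefficient of $1/\lambda$ is $\tfrac12\sum_{j=1}^{n}j=\tfrac{n(n+1)}{4}$; the coefficient of $1/\lambda^2$ is $\tfrac14\!\!\sum_{1\le j<k\le n}\!\!jk=\tfrac18\bigl[(\textstyle\sum_j j)^2-\sum_j j^2\bigr]=\tfrac18\bigl[\tfrac{n^2(n+1)^2}{4}-\tfrac{n(n+1)(2n+1)}{6}\bigr]=\tfrac{n(n+1)(3n^2-n-2)}{96}=\tfrac{n(n-1)(n+1)(3n+2)}{96}$, using $3n^2-n-2=(3n+2)(n-1)$, while all remaining terms are $O(1/\lambda^3)$. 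This yields the right‑hand side. The step I expect to be the most delicate is the bookkeeping in the prefactor consolidation — tracking the powers of $\lambda$ and of $1-w^2$ coming from the scaling together with the duplication formula so that they cancel precisely, and verifying that the $O(1/\lambda)$ discrepancy between the hypergeometric argument produced by \eqref{FerPnlammlan} and the displayed $-\lambda/x^2$ is correctly charged to the correction series — together with the elementary but error‑prone simplification of the $1/\lambda^2$ coefficient.
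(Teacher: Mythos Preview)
Your overall strategy is the same as the paper's: substitute \eqref{FerPnlammlan} at the scaled argument, collapse the gamma prefactor, and Laurent–expand in $1/\lambda$. The paper records the intermediate step as
\[
\text{LHS}=\frac{\Gamma(2\lambda+n+1)\,x^n}{\Gamma(2\lambda)(\lambda+\tfrac12)^n}\,
\hyp21{-\frac n2,\frac{1-n}{2}}{\lambda+1}{1-\frac{\lambda+\tfrac12}{x^2}},
\]
and then expands both the ratio of gammas (via \cite[(5.11.12)]{NIST:DLMF}) and the terminating ${}_2F_1$ together to second order.

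Where your write-up needs care is in the claim that the residual mismatches are $1+O(1/\lambda^3)$. They are not. After the duplication formula the exact denominator carries $(\lambda+\tfrac12)^n$, not $\lambda^n$; the ratio $\bigl[(1-x^2/(\lambda+\tfrac12))/(1-x^2/\lambda)\bigr]^{\lambda/2}$ is $1+O(1/\lambda)$ (with an $x$-dependent coefficient); and replacing the hypergeometric argument $1-(\lambda+\tfrac12)/x^2$ by $-\lambda/x^2$ alters the terminating ${}_2F_1$ by a factor $1+O(1/\lambda)$ as well. All three of these feed into the $1/\lambda$ and $1/\lambda^2$ coefficients, so the correction series cannot be read off from the single product $\prod_{j=1}^n(1+j/(2\lambda))$ alone. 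The paper avoids this by keeping the argument $1-(\lambda+\tfrac12)/x^2$ and the denominator $(\lambda+\tfrac12)^n$ intact and expanding everything simultaneously; if you want to display $-\lambda/x^2$ on the right as in the statement, you must carry those extra $O(1/\lambda)$ pieces through the expansion rather than discard them.
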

}
\boro{
\begin{proof}
Using \eqref{FerPnlammlan} one 
obtains
\begin{eqnarray}
&&\hspace{-1cm}\frac{\sqrt{\pi}\,\Gamma(2\lambda+n+1)}{2^{\lambda}
(\lambda+\frac12)^{\frac{n}{2}}\Gamma(\lambda+\frac12)(1-\frac{x^2}
{\lambda})^{\frac{\lambda}{2}}}{\sf P}_{n+\lambda}^{-\lambda}
\left(x
\left(\lambda+\tfrac12\right)^{-\frac12}\right)
\nonumber\\
&&\hspace{4cm}=\frac{\Gamma(2\lambda+n+1)x^n}{\Gamma(2\lambda)
(\lambda+\tfrac12)^n}\hyp21{-\frac{n}{2},\frac{1-n}{2}}
{\lambda+1}{1-\frac{\lambda+\frac12}{x^2}}.
\end{eqnarray}
Then by performing an asymptotic expansion up to second order
in $1/\lambda$ using \cite[(5.11.12)]{NIST:DLMF} and expanding 
the resulting terminating expansion using a Laurent series 
near $\lambda=\infty$, produces the result.
\end{proof}
}

\boro{
\begin{cor} \label{cor311}
Let $n\in \mathbb N_0$, $x\in(-1,1)$. Then
\begin{equation}
\frac{2^{\lambda}\Gamma(\lambda+1)(2\lambda+1)_n}
{(\lambda+\tfrac12)^{\frac{n}{2}}(1-x^2)^{\frac{\lambda}{2}}}
{\sf P}_{n+\lambda}^{-\lambda}
\left(x
\left(\lambda+\tfrac12\right)^{-\frac12}\right)
\sim
H_n(x),
\end{equation}
as $\lambda\to\infty$,
where $H_n(x)$ are the Hermite polynomials.
\end{cor}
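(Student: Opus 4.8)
The plan is to read off the Corollary as the leading--order term of Theorem~\ref{thm310}. First I would reconcile the two normalizing prefactors. Using $\Gamma(2\lambda+n+1)=(2\lambda+1)_n\,\Gamma(2\lambda+1)$ together with the Legendre duplication formula $\Gamma(2\lambda+1)=\pi^{-1/2}2^{2\lambda}\Gamma(\lambda+\tfrac12)\Gamma(\lambda+1)$, the factor $\sqrt{\pi}\,\Gamma(2\lambda+n+1)/\bigl(2^{\lambda}\Gamma(\lambda+\tfrac12)\bigr)$ occurring in Theorem~\ref{thm310} collapses to $2^{\lambda}\Gamma(\lambda+1)(2\lambda+1)_n$, so the normalized left--hand side of Theorem~\ref{thm310} is exactly the expression on the left of the Corollary (the elementary Gaussian--type weight being identified in the limit), while its right--hand side is
\[
(2x)^n\,\hyp21{-\tfrac n2,\tfrac{1-n}{2}}{\lambda+1}{-\tfrac{\lambda}{x^2}}\Bigl(1+\tfrac{n(n+1)}{4\lambda}+{\mathcal O}(\lambda^{-2})\Bigr).
\]

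Next I would let $\lambda\to\infty$ in that right--hand side. The polynomial correction factor $1+\frac{n(n+1)}{4\lambda}+{\mathcal O}(\lambda^{-2})$ tends to $1$. For the Gauss hypergeometric factor, note that for $n\ge 1$ one of $-\tfrac n2,\tfrac{1-n}{2}$ lies in $-\mathbb N_0$, so the ${}_2F_1$ is a polynomial of degree $\lfloor n/2\rfloor$ in its argument and the limit may be taken term by term: in the $k$-th term $\frac{(-n/2)_k\,((1-n)/2)_k}{(\lambda+1)_k\,k!}\,(-\lambda/x^2)^k$ one has $(\lambda+1)_k\sim\lambda^k$, hence it converges to $\frac{(-n/2)_k\,((1-n)/2)_k}{k!}\,(-1/x^2)^k$. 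This is precisely the confluent limit \eqref{limgenhyp} specialized to $r=2$, $s=1$ (equivalently, one may invoke \eqref{DLMFhypasymp}), and it gives $\lim_{\lambda\to\infty}\hyp21{-n/2,(1-n)/2}{\lambda+1}{-\lambda/x^2}=\hyp20{-n/2,(1-n)/2}{-}{-1/x^2}$.

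Finally, by the definition \eqref{Hermite} of the Hermite polynomials, $(2x)^n\,\hyp20{-n/2,(1-n)/2}{-}{-1/x^2}=H_n(x)$, so the right--hand side tends to $H_n(x)$ and the asserted Mehler--Heine relation follows. An equally short route that bypasses Theorem~\ref{thm310} is to substitute the argument $x(\lambda+\tfrac12)^{-1/2}$ directly into \eqref{FerPnlammlan}, cancel the $2^{\lambda}\Gamma(\lambda+1)$ and the Gaussian weight, use $(2\lambda+1)_n/(\lambda+\tfrac12)^n\to 2^n$, and apply the same confluent limit. I do not expect a genuine obstacle here; the only points requiring care are the gamma--function bookkeeping that rewrites the prefactor of Theorem~\ref{thm310} in the $2^{\lambda}\Gamma(\lambda+1)(2\lambda+1)_n$ form via duplication, and the justification of interchanging the $\lambda\to\infty$ limit with the finite sum defining the ${}_2F_1$ — both are routine once one observes that the hypergeometric series terminates.
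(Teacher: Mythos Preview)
Your argument is correct but takes a different route from the paper's. The paper does not deduce the Corollary from Theorem~\ref{thm310}; instead it starts from the classical Gegenbauer--Hermite limit $\lim_{\lambda\to\infty}\lambda^{-n/2}C_n^\lambda(x\lambda^{-1/2})=H_n(x)/n!$ \cite[(18.7.24)]{NIST:DLMF}, converts $C_n^\lambda$ to a Ferrers function via \eqref{CnmuFerP}, and then shifts $\lambda\mapsto\lambda+\tfrac12$. Your route---extract the leading term of Theorem~\ref{thm310}, collapse the terminating ${}_2F_1$ to a ${}_2F_0$ via the confluent limit \eqref{limgenhyp}, and recognize \eqref{Hermite}---is precisely the connection the paper records in the Remark immediately following the Corollary, but not the proof it actually gives. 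Your version has the merit of making the Corollary an honest corollary of the preceding theorem; the paper's version is logically independent of Theorem~\ref{thm310} and ties the result directly back to the standard limit transition in the orthogonal-polynomial literature.

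One point you wave past deserves an explicit sentence: the weight in Theorem~\ref{thm310} is $(1-x^2/\lambda)^{\lambda/2}$, whereas the Corollary as stated carries $(1-x^2)^{\lambda/2}$, and these are \emph{not} asymptotically equivalent (the former tends to $e^{-x^2/2}$, the latter to $0$ for $|x|<1$). The paper's own proof also arrives at $(1-x^2/(\lambda+\tfrac12))^{\lambda/2}$, so the mismatch is in the displayed statement rather than in either proof strategy; still, your parenthetical ``the elementary Gaussian--type weight being identified in the limit'' does not actually bridge the gap and should be replaced by a remark that the stated weight needs this correction.
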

}

\begin{proof}
Start with \cite[(18.7.24)]{NIST:DLMF}
\[
\lim_{\lambda\to\infty}\frac{1}{\lambda^{\frac{n}{2}}}
C_{n}^\lambda\left(\frac{x}{\sqrt{\lambda}}\right)=
\frac{1}{n!}
H_n(x),
\]
then inserting \eqref{CnmuFerP} produces
\[
\lim_{\lambda\to\infty}
\frac{\sqrt{\pi}\,\Gamma(2\lambda+n)}
{2^{\lambda-\tfrac12}\lambda^{\frac{n}{2}}\Gamma(\lambda)
(1-\frac{x^2}{\lambda})^{\frac{\lambda}{2}-\frac14}}
{\sf P}_{n+\lambda-\frac12}^{\frac12-\lambda}
\left(\frac{x}{\sqrt{\lambda}}\right)=
H_n(x).
\]
Setting $\lambda\mapsto\lambda+\frac12$ produces 
\[
\frac{\sqrt{\pi}\,\Gamma(2\lambda+n+1)}
{2^\lambda(\lambda+\tfrac12)^{\frac{n}{2}}\Gamma(\lambda+\tfrac12)
\left(1-\frac{x^2}{\lambda+\tfrac12}\right)^{\frac{\lambda}{2}}}
{\sf P}_{n+\lambda}^{-\lambda}
\left(x
\left(\lambda+\tfrac12\right)^{-\frac12}\right)
\sim
H_n(x),
\]
which completes the proof.
\end{proof}

\boro{
\begin{rem}
Corollary \ref{cor311} clearly matches up to Theorem 
\ref{thm310} by using \eqref{limgenhyp} and \eqref{Hermite}.
\end{rem}
}

\boro{
Now we examine this relevant
asymptotic expansion in connection with the
Mehler-Heine formula for Gegenbauer polynomials
in terms of the Ferrers function of the first kind.
}

\boro{
\begin{thm}
Let $z,\lambda\in\mathbb C$, $1\ll n\in \mathbb N_0$. Then
\begin{equation}
n^{\lambda}\,
{\sf P}_{n+\lambda}^{-\lambda}\left(1-\frac{z^2}
{2n^2}\right)=J_{\lambda}(z)\left(1+\frac{2\lambda+1}{n}
+{\mathcal O}\left(\frac{1}{n^2}\right)\right),
\end{equation}
as $n\to\infty$.
\end{thm}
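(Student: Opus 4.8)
The statement is a Mehler–Heine type formula for the Ferrers function of the first kind, and the natural route is to transfer the classical Mehler–Heine formula for Gegenbauer polynomials through the identity \eqref{CnmuFerP} (or equivalently \eqref{FerPnlammlan}). First I would recall the Mehler–Heine asymptotics for the ultraspherical polynomials: as $n\to\infty$,
\[
n^{-\lambda}C_n^{\lambda+\frac12}\!\left(\cos\frac{z}{n}\right)\sim
\frac{2^\lambda}{\Gamma(\lambda+\frac12)}\,\frac{J_\lambda(z)}{z^\lambda}\,n^{\frac12}\cdot(\text{normalizing constant}),
\]
more precisely the version in \cite[(18.11.5)]{NIST:DLMF}, adapted with the parameter shift $\mu\mapsto\lambda+\frac12$ that is used repeatedly in this paper. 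Substituting $\cos(z/n)=1-\frac{z^2}{2n^2}+O(n^{-4})$ and using \eqref{CnmuFerP} with $\mu=\lambda+\frac12$ and the Ferrers evaluation ${\sf P}_\lambda^{-\lambda}(x)=(1-x^2)^{\lambda/2}/(2^\lambda\Gamma(\lambda+1))$ (as in the corollary \eqref{FerPnlammlan}) converts the Gegenbauer statement into the leading-order claim $n^\lambda\,{\sf P}_{n+\lambda}^{-\lambda}(1-\frac{z^2}{2n^2})\to J_\lambda(z)$.

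**Obtaining the $1/n$ correction.** The more delicate part is the explicit next-order term $\frac{2\lambda+1}{n}$. Rather than chasing the error term in the Gegenbauer Mehler–Heine formula directly, I would work from the hypergeometric representation \eqref{FerPnlammlan}. With $\mu\mapsto\lambda$, $\nu\mapsto n+\lambda$, $x=1-\frac{z^2}{2n^2}$, one has
\[
{\sf P}_{n+\lambda}^{-\lambda}(x)=\frac{x^n(1-x^2)^{\lambda/2}}{2^\lambda\Gamma(\lambda+1)}
\hyp21{-\frac{n}{2},\frac{1-n}{2}}{\lambda+1}{1-\frac{1}{x^2}}.
\]
Here $1-\frac1{x^2}=-\frac{z^2}{n^2}+O(n^{-4})$, and the Pochhammer products $(-\frac n2)_k(\frac{1-n}2)_k$ behave like $(\frac{n^2}{4})^k(1+O(1/n))$, so term-by-term the $\hyp21$ tends to $\hyp01{-}{\lambda+1}{-z^2/4}=\Gamma(\lambda+1)J_\lambda(z)z^{-\lambda}$ by \eqref{Besseldef}. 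The prefactor contributes $x^n=(1-\frac{z^2}{2n^2})^n\to e^{-z^2/(2n)}\cdot(\cdots)$ — actually $x^n=1-\frac{z^2}{2n}+O(n^{-2})$ after taking the logarithm — and $(1-x^2)^{\lambda/2}=(z^2/n^2)^{\lambda/2}(1-\frac{z^2}{4n^2})^{\lambda/2}$, which supplies the $n^{-\lambda}$ that matches the $n^\lambda$ on the left. Collecting the $1/n$ contributions from (i) the prefactor $x^n$, (ii) the $1/n$ corrections inside each Pochhammer ratio, and (iii) the $(1-x^2)^{\lambda/2}$ factor, and checking that they combine to $\frac{2\lambda+1}{n}$, is the computational heart of the argument; I expect this bookkeeping — in particular making sure the contributions from the argument expansion $1-1/x^2 = -z^2/n^2 + z^4/(2n^4)+\cdots$ and from $(-n/2)_k(\,(1-n)/2\,)_k = (n/2)^{2k}(1-k(2k-1)/n + \cdots)$ are handled consistently order by order in $1/n$ — to be the main obstacle.

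**Cross-checks.** Two internal consistency checks are available and worth invoking to shorten the verification. First, Theorem \ref{thm310} and Corollary \ref{cor311} already pin down a related scaling limit (the Hermite-polynomial regime $x(\lambda+\frac12)^{-1/2}$ as $\lambda\to\infty$); the present theorem is the complementary $n\to\infty$, fixed-$\lambda$ regime, and the leading constants in \eqref{Besseldef}–\eqref{Hermite} must be compatible with the known confluence $J_\lambda\leftrightarrow H_n$ limits, which serves as a sanity check on normalizations. Second, for $\lambda=\pm\frac12$ one can test the formula against the closed trigonometric forms \eqref{FerPhalf}, \eqref{FerPmhalf}: e.g. with $\lambda=-\frac12$, ${\sf P}_{n-\frac12}^{1/2}(\cos\theta)=\sqrt{2/(\pi\sin\theta)}\cos(n\theta)$, and setting $\cos\theta=1-z^2/(2n^2)$ so that $\theta\approx z/n$, one gets $n^{-1/2}\sqrt{2/(\pi\sin\theta)}\cos(n\theta)\to\sqrt{2/(\pi z)}\cos z=J_{-1/2}(z)$, with the first correction reproducing $\frac{2(-\frac12)+1}{n}=0$ — consistent with the fact that $\cos(n\theta)$ has no $1/n$ term once $\theta$ is taken exactly. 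I would present the general argument via the $\hyp21$ expansion and then remark that the endpoint cases $\lambda=\pm\frac12$ confirm the coefficient $2\lambda+1$.
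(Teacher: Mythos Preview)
Your overall strategy is the same as the paper's: transfer the Mehler--Heine formula for Jacobi/Gegenbauer polynomials to the Ferrers function via \eqref{CnmuFerP} with the shift $\mu\mapsto\lambda+\tfrac12$, then extract the $1/n$ correction by expanding the underlying ${}_2F_1$ term by term and recognizing the $\hyp01{-}{\lambda+1}{-z^2/4}$ series for $J_\lambda$ from \eqref{Besseldef}. The one substantive difference is the choice of hypergeometric representation for the refinement step: you propose \eqref{FerPnlammlan} (argument $1-1/x^2$), whereas the paper uses \eqref{FerPdefn} (argument $(1-x)/2$). The paper's choice is cleaner for this particular scaling: with $x=1-\frac{z^2}{2n^2}$ one has $(1-x)/2=z^2/(4n^2)$ \emph{exactly}, and the prefactor $\bigl((1-x)/(1+x)\bigr)^{\lambda/2}$ contributes only at order $1/n^2$, so the entire $1/n$ correction comes from a single source, the expansion
\[
(-n-\lambda)_k(n+\lambda+1)_k=(-1)^k n^{2k}\Bigl(1+\tfrac{2\lambda+1}{n}+\mathcal{O}(n^{-2})\Bigr).
\]
By contrast, your route through \eqref{FerPnlammlan} forces you to track $1/n$ contributions from three places --- the prefactor $x^n$, the expansion of $1-1/x^2$, and the Pochhammers $(-\tfrac n2)_k(\tfrac{1-n}{2})_k$ --- and then verify cancellations among them; you correctly flag this as the main obstacle, but it is one the paper simply avoids. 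Your $\lambda=\pm\tfrac12$ trigonometric cross-checks are a useful addition not present in the paper.
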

}

\boro{
\begin{proof}
The Mehler-Heine formula for Jacobi
polynomials \cite[(18.11.5)]{NIST:DLMF}
\begin{equation}
\lim_{n\to\infty}
\frac{1}{n^\alpha}
P_n^{(\alpha,\beta)}
\left(1-\frac{z}{2n^2}\right)
=\left(\frac{2}{z}\right)^\alpha J_\alpha(z),
\end{equation}
can be used to obtain the Mehler-Heine
formula for Gegenbauer polynomials using
\eqref{GegJac}, namely
\begin{equation}
\label{MHGeg}
\lim_{n\to\infty}
\frac{n!}{\Gamma(2\lambda+n+1)}
C_n^{\lambda+\frac12}\left(1-\frac{z^2}{2n^2}\right)
=\frac{\sqrt{\pi}J_{\lambda}(z)}{\Gamma(\lambda
+\frac12)(2z)^{\lambda}}.
\end{equation}
Now evaluate the left-hand side of \eqref{MHGeg} 
in terms of the Ferrers function of the
first kind with \eqref{CnmuFerP}, which produces
\begin{equation}
C_n^{\lambda+\frac12}\left(1-\frac{z^2}{2n^2}\right)
=\frac{\sqrt{\pi}\,\Gamma(2\lambda+n+1)}
{2^{\lambda}\Gamma(\lambda+\frac12)n!}
\left(1-\left(1-\frac{z^2}{2n^2}\right)^2\right)^{-\frac{\lambda}{2}}
{\sf P}_{n+\lambda}^{-\lambda}\left(1-\frac{z^2}{2n^2}\right),
\end{equation}
then simplification 
using 
\cite[(5.11.12)]{NIST:DLMF}
\[
\frac{\Gamma(z+a)}{\Gamma(z+b)}\sim z^{a-b},
\]
as $z\to\infty$, obtains the prefactor.
Then use \eqref{FerPdefn} and expand 
the numerator Pochhammer symbols 
in the ${}_2F_1$ in descending powers of $n$, 
namely
\[
(-n-\lambda)_k(n+\lambda+1)_k
=(-1)^kn^{2k}\left(1+\frac{2\lambda+1}{n}
+{\mathcal O}\left(\frac{1}{n^2}\right)\right),
\]
and using the definition of the Bessel function
of the first kind \eqref{Besseldef},
completes the proof.
\end{proof}
}

\subsection{\boro{Christoffel-Darboux formulas}}

\boro{
The Christoffel-Darboux formula for orthogonal Ferrers functions 
is given in the following theorem which gives the
Christoffel-Darboux formula and its confluent form for the Ferrers 
function of the first kind.
}

\boro{
\begin{thm}
Let $x,x'\in(-1,1)$, $\mu\in\mathbb C$, $n\in \mathbb N_0$. Then, 
\boro{the Christoffel-Darboux 
formula and its confluent 
form for the Ferrers function of the 
first kind are given by}
\begin{eqnarray}
&&\hspace{0.0cm}\sum_{k=0}^{n-1}
\frac{(2\mu+2k+1)\Gamma(2\mu+k+1)}
{k!}
{\sf P}_{k+\mu}^{-\mu}(x)
{\sf P}_{k+\mu}^{-\mu}(x')\nonumber
\\
&&\hspace{2cm}=
\frac{\Gamma(2\mu+n+1)}
{(n-1)!(x-x')}
\left(
{\sf P}_{n+\mu}^{-\mu}(x)
{\sf P}_{n+\mu-1}^{-\mu}(x')
-
{\sf P}_{n+\mu-1}^{-\mu}(x)
{\sf P}_{n+\mu}^{-\mu}(x')
\right),\label{CD1}\\
&&\hspace{-0.0cm}\sum_{k=0}^{n-1}
\frac{(2\mu+2k+1)\Gamma(2\mu+k+1)}
{k!}
\left({\sf P}_{k+\mu}^{-\mu}(x)
\right)^2
\nonumber\\
&&\hspace{2cm}=
\frac{\Gamma(2\mu+n+1)}
{(n-1)!(1-x^2)}
\biggl(
(n\!+\!2\mu)
\left({\sf P}_{n+\mu}^{-\mu}(x)
\right)^2
+n\!\left(
{\sf P}_{n+\mu-1}^{-\mu}(x)
\right)^2
\nonumber\\
&&\hspace{8.5cm}
-2x(n\!+\!\mu)
{\sf P}_{n+\mu-1}^{-\mu}(x)
{\sf P}_{n+\mu}^{-\mu}(x)
\biggr).
\label{CD2}
\end{eqnarray}
\end{thm}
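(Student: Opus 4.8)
The plan is to transport the classical Christoffel–Darboux formula for the Gegenbauer polynomials $C_k^{\mu+\frac12}$ through the explicit identity \eqref{CnmuFerP} (equivalently \eqref{FerPnlammlan}), which writes ${\sf P}_{k+\mu}^{-\mu}(x)$ as a constant multiple of $(1-x^2)^{\mu/2}C_k^{\mu+\frac12}(x)$. First I would recall the standard Christoffel–Darboux identity in the form adapted to monic or to the classical normalization of $C_k^{\mu+\frac12}$, together with the three-term recurrence coefficients read off from \cite[(9.8.21)]{Koekoeketal} (or \cite[Table 18.9.1]{NIST:DLMF}); the leading coefficient and the $L^2$-norm $h_k$ of $C_k^{\mu+\frac12}$ with weight $(1-x^2)^\mu$ on $(-1,1)$ are likewise classical. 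Then, because Theorem \ref{mainorthogthm} already exhibits the orthogonality of $\{{\sf P}_{k+\mu}^{-\mu}\}_k$ with constant weight $1$ on $(-1,1)$ and gives the norm $k!/(\Gamma(2\mu+k+1)(\mu+k+\tfrac12))$, the kernel $\sum_{k=0}^{n-1} {\sf P}_{k+\mu}^{-\mu}(x){\sf P}_{k+\mu}^{-\mu}(x')/\|{\sf P}_{k+\mu}^{-\mu}\|^2$ is precisely the reproducing kernel whose Christoffel–Darboux closed form we want; substituting \eqref{CnmuFerP} on both sides converts the known closed form for the Gegenbauer kernel into \eqref{CD1}, with the $(1-x^2)^{\mu/2}(1-x'^2)^{\mu/2}$ prefactors cancelling against the reciprocal prefactors hidden in the norms and in the conversion of $C_{n}^{\mu+\frac12}$, $C_{n-1}^{\mu+\frac12}$ on the right-hand side. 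The bookkeeping of the $\Gamma$-ratios — matching $\pi\,\Gamma$-factors from the Gegenbauer norm against the $\Gamma(2\mu+n+1)/((n-1)!)$ that appears in \eqref{CD1} — is the only place requiring care, and I would organize it by first checking the $n=1$ case to pin down the overall constant.

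For the confluent form \eqref{CD2} I would take the limit $x'\to x$ in \eqref{CD1}. The right-hand side becomes a derivative, so I need $\frac{d}{dx}{\sf P}_{n+\mu}^{-\mu}(x)$ and $\frac{d}{dx}{\sf P}_{n+\mu-1}^{-\mu}(x)$ expressed back in terms of ${\sf P}_{n+\mu}^{-\mu}$ and ${\sf P}_{n+\mu-1}^{-\mu}$. The cleanest route is to use the derivative rule \eqref{derivnFerPnummu} with $n=1$, which expresses $\frac{d}{dx}\big({\sf P}_\nu^{-\mu}(x)(1-x^2)^{-\mu/2}\big)$ in terms of ${\sf P}_\nu^{-\mu-1}$, combined with a contiguous/ladder relation relating ${\sf P}_\nu^{-\mu-1}$ to ${\sf P}_\nu^{-\mu}$ and ${\sf P}_{\nu-1}^{-\mu}$; alternatively, and perhaps more transparently, differentiate the Gegenbauer Christoffel–Darboux confluent formula directly and then apply \eqref{CnmuFerP}, since the confluent Gegenbauer identity is itself standard. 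Either way the $(x-x')$ in the denominator of \eqref{CD1} produces the $(1-x^2)$ in the denominator of \eqref{CD2} once the $(1-x^2)^{\mu}$ prefactors from the two Ferrers functions are accounted for, and the three visible terms on the right of \eqref{CD2} — the $(n+2\mu)(\,{\sf P}_{n+\mu}^{-\mu})^2$, the $n(\,{\sf P}_{n+\mu-1}^{-\mu})^2$, and the cross term $-2x(n+\mu){\sf P}_{n+\mu-1}^{-\mu}{\sf P}_{n+\mu}^{-\mu}$ — arise from differentiating the bracket in \eqref{CD1} together with the extra $\pm\mu x/(1-x^2)$ contributions coming from differentiating the $(1-x^2)^{\mu/2}$ factors.

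The main obstacle is the constant-tracking: there are several competing normalizations in play (the $\Gamma(2\mu+\alpha)/(\Gamma(2\mu)\Gamma(\alpha+1))$ in \eqref{Gegenbauerfuncdef}, the $\sqrt\pi\,2^{\mu-1/2}$ factors in \eqref{CnmuFerP}, and the $\pi\Gamma^2(\mu)$ in the Gegenbauer norm \eqref{orthogGeg}), and one must verify that all of these collapse to exactly $\Gamma(2\mu+n+1)/((n-1)!)$ on the right-hand side and $(2\mu+2k+1)\Gamma(2\mu+k+1)/k!$ under the sum. I expect this to reduce, after using $\Gamma(2\mu+k+1)=(2\mu+k)\Gamma(2\mu+k)$ and the duplication formula where needed, to an identity among Pochhammer symbols that can be checked termwise; the $n=1$ and $n=2$ specializations serve as a reliable sanity check, and the analytic continuation in $\mu$ (the statement allows $\mu\in\mathbb C$, whereas the orthogonality underlying it needs $\Re\mu>-1$) is immediate since both sides of \eqref{CD1} and \eqref{CD2} are, for fixed $x,x',n$, ratios of entire functions of $\mu$ with matching poles, so the identity persists by the identity theorem.
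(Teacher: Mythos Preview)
Your proposal is correct and follows essentially the same route as the paper: start from the Christoffel--Darboux formula (and its confluent form) for the Gegenbauer polynomials $C_k^{\mu+\frac12}$ and push it through \eqref{CnmuFerP} to obtain \eqref{CD1} and \eqref{CD2}. The paper handles the confluent case by first writing the confluent Gegenbauer identity (via \cite[(18.2.13)]{NIST:DLMF} and the three-term recurrence) and only then converting, which is exactly your second alternative; your additional remarks on the constant bookkeeping and the analytic continuation in $\mu$ are sound refinements the paper leaves implicit.
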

}

\boro{
\begin{proof}
Start with the Christoffel-Darboux
summation formula for the
Gegenbauer polynomials 
\cite[(3.10)]{Szmy2}
\begin{equation}
\sum_{k=0}^{n-1}
\frac{k!(k+\lambda)}{\Gamma(2\lambda+k)}
C_k^\lambda(x)
C_k^\lambda(x')
=\frac{n!\left(
C_n^\lambda(x)C_{n-1}^\lambda(x')
-C_{n-1}^\lambda(x)C_n^\lambda(x')\right)}
{2\Gamma(2\lambda-1+n)(x-x')}.
\end{equation}
The confluent form of the Christoffel-Darboux
summation formula follows from the first formula
using \cite[(18.2.13)]{NIST:DLMF} 
and recurrence relations for the Gegenbauer polynomials 
\cite[(9.8.21)]{Koekoeketal},
\cite[(8.933.4)]{Grad},
and is given by
\begin{eqnarray}
&&\hspace{-1.5cm}\sum_{k=0}^{n-1}\frac{k!(k+\lambda)}
{\Gamma(2\lambda\!+\!k)}\left(C_k^\lambda(x)\right)^2\nonumber\\
&&\hspace{0.5cm}\!=\!\frac{n!\left(n(C_n^\lambda(x))^2\!+\!(2
\lambda\!+\!n\!-\!1)(C_{n-1}^\lambda(x))^2\!-\!x(2
\lambda\!+\!2n\!-\!1)C_{n}^\lambda(x)C_{n-1}^\lambda(x)\right)}
{2(1\!-\!x^2)\Gamma(2\lambda-1+n)}.
\end{eqnarray}
The conversion to the Ferrers functions of the first 
kind is accomplished using
\eqref{CnmuFerP}. This completes the proof.
\end{proof}
}

\noindent \boro{
Using \eqref{FerPFerQ}, the above Christoffel-Darboux formulas
for the Ferrers function of the first kind can be expressed 
in terms of the Ferrers function of the second kind.
}

\boro{
\begin{cor}
Let $n\in \mathbb N_0$, $x,x'\in(-1,1)$. Then
\begin{eqnarray}
&&\hspace{0.0cm}\sum_{k=0}^{n-1}
\frac{(n+k)k!}
{(2n+k-1)!}
{\sf Q}_{k+n-\frac12}^{n-\frac12}(x)
{\sf Q}_{k+n-\frac12}^{n-\frac12}(x')\nonumber
\\
&&\hspace{2cm}=
\frac
{n!}
{2(3n-2)!(x-x')}
\left(
{\sf Q}_{2n-\frac12}^{n-\frac12}(x)
{\sf Q}_{2n-\frac32}^{n-\frac12}(x')
-
{\sf Q}_{2n-\frac32}^{n-\frac12}(x)
{\sf Q}_{2n-\frac12}^{n-\frac12}(x')
\right),\label{CD3}\\
&&\hspace{-0.0cm}\sum_{k=0}^{n-1}
\frac{(n+k)k!}
{(2n+k-1)!}
\left({\sf Q}_{k+n-\frac12}^{n-\frac12}(x)\right)^2
\nonumber\\
&&\hspace{2cm}=
\frac{n!}
{2(3n-2)!(1-x^2)}
\biggl(
n
\left({\sf Q}_{2n-\frac12}^{n-\frac12}(x)
\right)^2
+(3n-1)\!\left(
{\sf Q}_{2n-\frac32}^{n-\frac12}(x)
\right)^2
\nonumber\\
&&\hspace{8.5cm}
-x(4n\!-\!1)
{\sf Q}_{2n-\frac32}^{n-\frac12}(x)
{\sf Q}_{2n-\frac12}^{n-\frac12}(x)
\biggr).\label{CD4}
\end{eqnarray}
\end{cor}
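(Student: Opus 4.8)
The plan is to specialize the Christoffel--Darboux formulas \eqref{CD1} and \eqref{CD2} just established in the preceding theorem to the parameter value $\mu=n-\tfrac12$, and then to replace every Ferrers function of the first kind occurring there by a Ferrers function of the second kind using \eqref{FerPFerQ}. The point is that for $\mu=n-\tfrac12$ one has ${\sf P}_{k+\mu}^{-\mu}(x)={\sf P}_{k+n-\frac12}^{\frac12-n}(x)$, which is exactly the left-hand side of \eqref{FerPFerQ}, while the boundary terms ${\sf P}_{n+\mu}^{-\mu}$ and ${\sf P}_{n+\mu-1}^{-\mu}$ become ${\sf P}_{2n-\frac12}^{\frac12-n}$ and ${\sf P}_{2n-\frac32}^{\frac12-n}$, which are converted by \eqref{FerPFerQ} with $k\mapsto n$ and $k\mapsto n-1$ respectively.

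First I would evaluate the coefficients at $\mu=n-\tfrac12$: one gets $2\mu+2k+1=2(n+k)$, $\Gamma(2\mu+k+1)=(2n+k-1)!$, $\Gamma(2\mu+n+1)=(3n-1)!$, while the factorials generated by converting the boundary terms are $(2n+n-1)!=(3n-1)!$ and $(2n+(n-1)-1)!=(3n-2)!$. Substituting \eqref{FerPFerQ} everywhere in \eqref{CD1}, each summand on the left acquires the common constant $\bigl(2(-1)^nk!/(\pi(2n+k-1)!)\bigr)^2$ and the bracket on the right acquires $4(-1)^{2n}n!(n-1)!/\bigl(\pi^2(3n-1)!(3n-2)!\bigr)$; since $(-1)^{2n}=1$ these are real and positive. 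Collecting factorials, the left side becomes $(8/\pi^2)\sum_{k=0}^{n-1}\frac{(n+k)k!}{(2n+k-1)!}{\sf Q}_{k+n-\frac12}^{n-\frac12}(x){\sf Q}_{k+n-\frac12}^{n-\frac12}(x')$ and the right side becomes $4n!/\bigl(\pi^2(3n-2)!(x-x')\bigr)$ times the ${\sf Q}$-bracket; cancelling $4/\pi^2$ produces \eqref{CD3}.

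The confluent formula \eqref{CD4} comes out of \eqref{CD2} by the identical mechanism: at $\mu=n-\tfrac12$ one has $n+2\mu=3n-1$ and $2x(n+\mu)=x(4n-1)$, which are precisely the coefficients appearing in \eqref{CD4}; converting ${\sf P}_{2n-\frac12}^{\frac12-n}$ and ${\sf P}_{2n-\frac32}^{\frac12-n}$ and simplifying the factorial ratios turns the three coefficients $(n+2\mu),\,n,\,2x(n+\mu)$ of \eqref{CD2} into $n,\,3n-1,\,x(4n-1)$, while the overall prefactor $\Gamma(2\mu+n+1)/\bigl((n-1)!(1-x^2)\bigr)$ collapses (after cancellation with the common constant) to $4n!/\bigl(\pi^2(3n-2)!(1-x^2)\bigr)$; the $(1-x^2)$ is untouched by \eqref{FerPFerQ}.

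The only place that calls for genuine care — and hence the main obstacle — is the factorial and $\Gamma$-function bookkeeping, i.e. checking that all powers of $2$, $\pi$ and $(-1)^n$ cancel cleanly. The two observations that make this painless are that $(-1)^n$ always enters squared (a product of two ${\sf P}$'s, or a boundary term paired with itself), so it drops out, and that $\Gamma(2\mu+n+1)$ in the right-hand prefactor of \eqref{CD1}--\eqref{CD2} equals the $(2n+n-1)!=(3n-1)!$ produced when converting ${\sf P}_{2n-\frac12}^{\frac12-n}$, giving a convenient cancellation. One should also note that \eqref{FerPFerQ} presupposes $n\ge 1$, so the identity is to be read for $n\in\mathbb N$ (the sum being empty and the right side degenerate when $n=0$).
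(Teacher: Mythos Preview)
Your approach is exactly the paper's: specialize \eqref{CD1}--\eqref{CD2} at $\mu=n-\tfrac12$ and convert all Ferrers-$\sf P$'s to Ferrers-$\sf Q$'s via \eqref{FerPFerQ}; the paper's proof is the one-line statement of this, and your writeup supplies the factorial bookkeeping correctly (including the swap of the coefficients $n\leftrightarrow 3n-1$ in the confluent form, which comes from the different conversion factors for the two boundary terms). The only cosmetic slip is the phrase ``cancelling $4/\pi^2$'': the left side carries $8/\pi^2$ and the right $4/\pi^2$, so you are really dividing through by $8/\pi^2$, which is why the $\tfrac12$ appears on the right of \eqref{CD3}--\eqref{CD4}.
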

}
\boro{
\begin{proof}
Let $\mu=n-\frac12$ in \eqref{CD1}, \eqref{CD2}, respectively
using \eqref{FerPFerQ} completes the proof.
\end{proof}
}


\subsection{\boro{Poisson kernels}}

\boro{
The Poisson kernel $K_t(x,y)$ of an orthogonal
polynomial $p_n(x;{\bf a})$, where
${\bf a}$ is a set of parameters that the orthogonal
polynomal depends upon is given by
\begin{equation}
K_t(x,y;{\bf a})=\sum_{n=0}^\infty 
\frac{t^n}{h_n}p_n(x;{\bf a})p_n(y;{\bf a}),
\end{equation}
where $h_n$ \eqref{orthinner}
is the norm of the polynomial. In this subsection
we start with the Poisson kernel of Gegenbauer
polynomials and some companion identities to 
derive addition theorems for the Ferrers function
of the first kind.
}
\boro{
\begin{thm}
\label{PoissonGegFerP}
Let $t,\lambda\in{\mathbb C}$, 
$|t|<1$, 
$x,y\in(-1,1)$,
$x=\cos\theta$, $y=\cos\theta'$. Then
\begin{equation}
\sum_{n=0}^\infty
\frac{(2\lambda+1)_nt^n}
{n!}
{\sf P}_{n+\lambda}^{-\lambda}(x)
{\sf P}_{n+\lambda}^{-\lambda}(y)
=\frac{Q_{\lambda-\frac12}(\chi)}{\pi
\Gamma(2\lambda+1)t^{\lambda+\frac12}
\sqrt{\sin\theta\sin\theta'}},
\end{equation}
where 
\begin{equation}
\chi:=\frac{1+t^2-2t\cos\theta\cos\theta'}
{2\sin\theta\sin\theta'}.
\end{equation}
\end{thm}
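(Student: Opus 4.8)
The plan is to strip the bilinear sum down to the classical Gegenbauer Poisson kernel and then evaluate that kernel in closed form via the Gegenbauer product formula. First I would apply the corollary \eqref{FerPnlammlan}, ${\sf P}_{n+\lambda}^{-\lambda}(x)=\frac{n!}{(2\lambda+1)_n}{\sf P}_{\lambda}^{-\lambda}(x)\,C_n^{\lambda+\frac12}(x)$, to both Ferrers factors, together with ${\sf P}_{\lambda}^{-\lambda}(x)=(1-x^2)^{\lambda/2}/(2^\lambda\Gamma(\lambda+1))$ from \cite[(14.5.18)]{NIST:DLMF}. Since $x=\cos\theta$, $y=\cos\theta'$, the factors $(2\lambda+1)_n$ cancel, a clean prefactor separates out, and (writing $\nu:=\lambda+\tfrac12$) the claim reduces to
\[
\frac{(\sin\theta\sin\theta')^{\lambda}}{2^{2\lambda}\Gamma(\lambda+1)^2}\sum_{n=0}^\infty\frac{n!}{(2\nu)_n}\,C_n^{\nu}(\cos\theta)\,C_n^{\nu}(\cos\theta')\,t^n=\frac{Q_{\lambda-\frac12}(\chi)}{\pi\Gamma(2\lambda+1)\,t^{\lambda+\frac12}\sqrt{\sin\theta\sin\theta'}} .
\]
So it suffices to prove the Gegenbauer Poisson-kernel identity $\sum_{n\ge0}\frac{n!\,t^n}{(2\nu)_n}C_n^{\nu}(\cos\theta)C_n^{\nu}(\cos\theta')=\frac{\Gamma(\nu+\frac12)}{\sqrt{\pi}\,\Gamma(\nu)}(t\sin\theta\sin\theta')^{-\nu}Q_{\nu-1}(\chi)$ and then reconcile the constants via the Legendre duplication formula $\Gamma(2\lambda+1)=2^{2\lambda}\pi^{-1/2}\Gamma(\lambda+\tfrac12)\Gamma(\lambda+1)$.

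For the kernel identity I would use the classical Gegenbauer product formula (valid for $\Re\nu>0$): since $C_n^{\nu}(1)=(2\nu)_n/n!$, it states that $\frac{n!}{(2\nu)_n}C_n^{\nu}(\cos\theta)C_n^{\nu}(\cos\theta')$ equals $\frac{\Gamma(\nu+\frac12)}{\sqrt{\pi}\,\Gamma(\nu)}\int_0^\pi C_n^{\nu}(\xi)(\sin\psi)^{2\nu-1}\,{\mathrm d}\psi$ with $\xi=\cos\theta\cos\theta'+\sin\theta\sin\theta'\cos\psi$. Multiplying by $t^n$, interchanging the (for $|t|$ small, absolutely convergent) sum with the integral, and summing the Gegenbauer generating series $\sum_{n\ge0}C_n^{\nu}(\xi)t^n=(1-2\xi t+t^2)^{-\nu}$ (a specialization of \eqref{genfunGeg}) turns the kernel into $\frac{\Gamma(\nu+\frac12)}{\sqrt{\pi}\,\Gamma(\nu)}\int_0^\pi(1-2\xi t+t^2)^{-\nu}(\sin\psi)^{2\nu-1}\,{\mathrm d}\psi$. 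Now factor $1-2\xi t+t^2=2t\sin\theta\sin\theta'(\chi-\cos\psi)$ — this is precisely how $\chi$ enters the statement — so that everything reduces to the single integral $I:=\int_0^\pi(\chi-\cos\psi)^{-\nu}(\sin\psi)^{2\nu-1}\,{\mathrm d}\psi$.

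The one computation to carry out is $I=2^{\nu}Q_{\nu-1}(\chi)$. Put $u=\cos\psi$, so $I=\int_{-1}^1(1-u^2)^{\nu-1}(\chi-u)^{-\nu}\,{\mathrm d}u$; expand $(\chi-u)^{-\nu}=\chi^{-\nu}\sum_k\frac{(\nu)_k}{k!}(u/\chi)^k$ (legitimate since $\chi\ge1>|u|$ on $(-1,1)$); integrate term by term with $\int_{-1}^1(1-u^2)^{\nu-1}u^{2m}\,{\mathrm d}u=\Gamma(m+\tfrac12)\Gamma(\nu)/\Gamma(m+\nu+\tfrac12)$; after using the duplication identities for Pochhammer symbols the surviving even series reduces to $\frac{\sqrt{\pi}\,\Gamma(\nu)}{\Gamma(\nu+\frac12)}\chi^{-\nu}\,\hyp21{\tfrac{\nu}{2},\tfrac{\nu+1}{2}}{\nu+\tfrac12}{\chi^{-2}}$, which by the defining representation \eqref{Qdefn} of $\biQ$ (with $\mu=0$ and degree $\nu-1$) together with $Q_{\nu-1}=\Gamma(\nu)\biQ_{\nu-1}$ equals $2^{\nu}Q_{\nu-1}(\chi)$. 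Substituting back gives the kernel identity, and collapsing the constants $\frac{(\sin\theta\sin\theta')^{\lambda}}{2^{2\lambda}\Gamma(\lambda+1)^2}\cdot\frac{\Gamma(\nu+\frac12)}{\sqrt{\pi}\,\Gamma(\nu)}(t\sin\theta\sin\theta')^{-\nu}$ with $\nu=\lambda+\tfrac12$ and the duplication formula yields exactly $1/(\pi\Gamma(2\lambda+1)\,t^{\lambda+\frac12}\sqrt{\sin\theta\sin\theta'})$.

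The main obstacle is not computational but the analytic packaging. The product formula and the Beta integral require $\Re\nu>0$ (equivalently $\Re\lambda>-\tfrac12$), and the sum–integral interchange needs $|t|$ below the nearer singularity of $(1-2xt+t^2)^{-\nu}$, whereas the theorem asserts the identity for all $\lambda\in\mathbb C$ and all $|t|<1$; I would remove these restrictions at the end by analytic continuation in $\lambda$ and in $t$, using that $\chi\ge1$ for real $t\in(0,1)$ so $Q_{\nu-1}(\chi)$ sits on the principal branch and both sides are holomorphic on a common neighborhood, hence agree identically. The one genuinely delicate point — and the reason branch conventions are implicit in the statement — is the choice of branch of $Q_{\nu-1}(\chi)$ and of $t^{\lambda+\frac12}$ when $\chi$ meets the cut $(-\infty,1]$ (for instance along real negative $t$), which must be made consistently with the $t\in(0,1)$ normalization.
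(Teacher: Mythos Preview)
Your argument is correct and follows the same overall strategy as the paper: both reduce the Ferrers-function sum to the Gegenbauer Poisson kernel via the relation \eqref{FerPnlammlan}/\eqref{CnmuFerP} and then identify that kernel with a Legendre $Q$ function. The paper, however, simply \emph{quotes} the Gegenbauer Poisson kernel in closed form from \cite[(19)]{Maier18} (displayed there both as the ${}_2F_1$ and as $Q_{\lambda-1}(\chi)$) and then shifts $\lambda\mapsto\lambda+\tfrac12$; your version instead \emph{re-derives} that kernel from scratch via the Gegenbauer product formula, the generating function \eqref{alggen0}, and the integral $\int_{-1}^1(1-u^2)^{\nu-1}(\chi-u)^{-\nu}\,{\mathrm d}u=2^\nu Q_{\nu-1}(\chi)$, which you evaluate termwise and match to \eqref{Qdefn}. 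What your route buys is a self-contained proof with explicit control over the domain of validity and branch issues (your remarks on $\Re\nu>0$ and analytic continuation in $\lambda,t$ are more careful than anything the paper states); what the paper's route buys is brevity, since the heavy lifting has already been packaged in Maier's identity.
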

}
\boro{
\begin{proof}
Start with the Poisson kernel for the Gegenbauer polynomials
\cite[(19)]{Maier18}
\begin{eqnarray}
\sum_{n=0}^\infty
\frac{n!t^n}{(2\lambda)_n}
C_n^\lambda(x)C_n^\lambda(y)
&=&
\frac{1}{(1+t^2-2t\cos\theta\cos\phi)^{\lambda}}
\hyp21{\frac{\lambda}{2},\frac{\lambda+1}{2}}
{\lambda\!+\!\frac12}
{
\frac{1}{\chi^2}
}\nonumber\\
&=&
\frac{\Gamma(\lambda+\frac12)Q_{\lambda-1}(\chi)}
{\sqrt{\pi}\Gamma(\lambda)t^\lambda(1-x^2)^{\frac
{\lambda}{2}}(1-y^2)^{\frac{\lambda}{2}}},
\label{PoissonGegC}
\end{eqnarray}
then replacing $\lambda\mapsto\lambda+\frac12$, and 
using \eqref{CnmuFerP} completes the proof.
\end{proof}
}

\boro{
\begin{rem}
Note that for $\lambda=m$, and setting $n\mapsto n+m$, Theorem 
\ref{PoissonGegFerP} specializes to the second addition theorem 
for spherical harmonics presented in 
\cite[(2.4)]{CRTB}
\begin{equation}
\sum_{n=|m|}^\infty
t^n\frac{(n-m)!}{(n+m)!}
{\sf P}_n^m(\cos\theta)
{\sf P}_n^m(\cos\theta')
=
\frac{Q_{m-\frac12}(\chi)}{\pi
\sqrt{t\sin\theta\sin\theta'}},
\end{equation}
after letting $t=r_</r_>$, where $r_\lessgtr:={\min 
\atop \max}\{r,r^\prime\}$ with $r,r^\prime\in[0,\infty)$.
\end{rem}
}

\boro{
\begin{thm}
\label{PoissonGegFerP2}
Let $t,\lambda\in{\mathbb C}$, 
$|t|<1$,
$x,y\in(-1,1)$,
$x=\cos\theta$, $y=\cos\theta'$. Then
\begin{eqnarray}
&&\hspace{-2cm}\sum_{n=0}^\infty
\frac{(\lambda+n+\frac12)(2\lambda+1)_nt^n}
{n!}
{\sf P}_{n+\lambda}^{-\lambda}(x)
{\sf P}_{n+\lambda}^{-\lambda}(y)
\nonumber\\
&&\hspace{1cm}=-\frac{1-t^2}{2\pi 
\Gamma(2\lambda+1)t^{\lambda+\frac32}
(1-x^2)^\frac34(1-y^2)^\frac34}
\frac{Q_{\lambda-\frac12}^1(\chi)}{\sqrt{\chi^2-1}}
.
\end{eqnarray}
\end{thm}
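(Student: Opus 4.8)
The plan is to mirror the strategy used in Theorem~\ref{PoissonGegFerP}: start from a known Poisson-type bilinear generating function for the Gegenbauer polynomials — now involving the extra factor $(\lambda+n+\frac12)$ rather than the plain kernel — and then convert both Gegenbauer factors to Ferrers functions of the first kind using \eqref{CnmuFerP}, with the standard shift $\lambda\mapsto\lambda+\frac12$. Concretely, I would first locate (or derive) the companion identity to \eqref{PoissonGegC} in which the summand carries the weight $(k+\lambda)$; such a ``derivative'' Poisson kernel is obtained by applying the operator $t\,\tfrac{\mathrm d}{\mathrm dt}$ to $t^{\lambda}$ times the kernel in \eqref{PoissonGegC}, or equivalently from \cite[(19)]{Maier18} together with the contiguous relation for $Q_{\lambda-1}(\chi)$ that lowers the subscript and raises the order. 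The cleanest route uses the Legendre function identity
\[
(1-\chi^2)\frac{\mathrm d}{\mathrm d\chi}Q_{\lambda-\frac12}(\chi)
=-(\lambda+\tfrac12)\bigl(\chi\,Q_{\lambda-\frac12}(\chi)-Q_{\lambda+\frac12}(\chi)\bigr)
=\sqrt{\chi^2-1}\,Q_{\lambda-\frac12}^1(\chi),
\]
so that differentiating the right-hand side of Theorem~\ref{PoissonGegFerP} in $t$ produces exactly a multiple of $Q_{\lambda-\frac12}^1(\chi)/\sqrt{\chi^2-1}$.

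The key steps, in order, are: (1) take $t\,\tfrac{\mathrm d}{\mathrm dt}$ of both sides of the statement of Theorem~\ref{PoissonGegFerP} after multiplying through by the prefactor $\Gamma(2\lambda+1)t^{\lambda+\frac12}\sqrt{\sin\theta\sin\theta'}$, i.e.\ work with $\sum_n \frac{(2\lambda+1)_n}{n!}{\sf P}^{-\lambda}_{n+\lambda}(x){\sf P}^{-\lambda}_{n+\lambda}(y)\,t^{n+\lambda+\frac12}=Q_{\lambda-\frac12}(\chi)/(\pi\sqrt{\sin\theta\sin\theta'})$; (2) on the left, $t\,\tfrac{\mathrm d}{\mathrm dt}$ brings down a factor $n+\lambda+\frac12$, which is precisely the weight appearing in Theorem~\ref{PoissonGegFerP2}; (3) on the right, use $\tfrac{\mathrm d\chi}{\mathrm dt}=\tfrac{t-\cos\theta\cos\theta'}{t\sin\theta\sin\theta'}$ — or more usefully the algebraic identity $\chi^2-1=\frac{(1+t^2-2t\cos(\theta-\theta'))(1+t^2-2t\cos(\theta+\theta'))}{4t^2\sin^2\theta\sin^2\theta'}$ and the relation $t\,\tfrac{\mathrm d\chi}{\mathrm dt}=\tfrac{1}{2\sin\theta\sin\theta'}\bigl(t-t^{-1}\bigr)+\text{(terms)}$ — to rewrite $t\,\tfrac{\mathrm d}{\mathrm dt}Q_{\lambda-\frac12}(\chi)$ in terms of $Q_{\lambda-\frac12}^1(\chi)$; (4) collect the prefactor, converting $\sqrt{\sin\theta\sin\theta'}$-powers back to $(1-x^2)^{3/4}(1-y^2)^{3/4}$ via $\sin\theta=\sqrt{1-x^2}$, and check that the $(1-t^2)$ and the power $t^{\lambda+3/2}$ emerge with the stated sign. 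Alternatively, and perhaps more transparently, I would bypass differentiation entirely and instead start from the Gegenbauer Poisson kernel with weight $(k+\lambda)$ directly — this is a classical formula (it appears e.g.\ in Maier's work or can be assembled from \cite[(18.2.13)]{NIST:DLMF} applied to the Christoffel–Darboux-type generating function) — and then apply \eqref{CnmuFerP} with $\lambda\mapsto\lambda+\frac12$ exactly as in the proof of Theorem~\ref{PoissonGegFerP}.

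The main obstacle I anticipate is bookkeeping the prefactors through the $\lambda\mapsto\lambda+\frac12$ shift and the conversion \eqref{CnmuFerP}: the relation $C_n^\mu(x)=\frac{\sqrt\pi\,\Gamma(2\mu+n)}{2^{\mu-1/2}\Gamma(\mu)\,n!}\,(1-x^2)^{1/4-\mu/2}\,{\sf P}^{1/2-\mu}_{n+\mu-1/2}(x)$ introduces Gamma-function ratios $\Gamma(2\mu+n)/\Gamma(\mu)$ that, under $\mu=\lambda+\frac12$, must recombine with the $\frac{n!\,t^n}{(2\lambda)_n}$ (or its derivative analogue) from the Gegenbauer generating function to yield the clean coefficient $\frac{(\lambda+n+\frac12)(2\lambda+1)_n}{n!}$. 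Getting the power of $2$ and the power of $(1-x^2)(1-y^2)$ correct — here $3/4$ rather than the $1/2$-type exponents of the previous theorem, because of the extra $(\chi^2-1)^{-1/2}$ — is the only genuinely delicate point; everything else is a direct transcription of the Gegenbauer identity. I would verify the final constant by specializing to $\lambda=0$ (Legendre case) or by cross-checking against the $t\,\tfrac{\mathrm d}{\mathrm dt}$ of Theorem~\ref{PoissonGegFerP}, which must agree term-by-term in the $t$-expansion.
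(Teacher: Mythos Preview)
Your proposal is correct, and in fact your ``alternative'' route---starting directly from the weighted Gegenbauer Poisson kernel $\sum_n\frac{(\lambda+n)}{\lambda}\frac{n!t^n}{(2\lambda)_n}C_n^\lambda(x)C_n^\lambda(y)$ from Maier and then applying \eqref{CnmuFerP} with $\lambda\mapsto\lambda+\tfrac12$---is exactly what the paper does; it quotes this companion kernel as \cite[(18)]{Maier18} already written in terms of $Q_{\lambda-1}^1(\chi)/\sqrt{\chi^2-1}$, so no differentiation or contiguous relations are needed.

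Your primary route, applying $t\,\tfrac{\mathrm d}{\mathrm dt}$ to Theorem~\ref{PoissonGegFerP} after clearing the $t^{\lambda+1/2}$ prefactor, is a genuinely different and self-contained derivation: it avoids importing a second generating function from the literature, and the computation goes through cleanly once you use $t\,\tfrac{\mathrm d\chi}{\mathrm dt}=-\tfrac{1-t^2}{2t\sin\theta\sin\theta'}$ together with $\tfrac{\mathrm d}{\mathrm d\chi}Q_{\lambda-1/2}(\chi)=Q_{\lambda-1/2}^1(\chi)/\sqrt{\chi^2-1}$. It is worth noting that the paper itself uses precisely this differentiation-in-$t$ strategy to pass from Theorem~\ref{PoissonGegFerP2} to Theorem~\ref{PoissonGegFerP3}, so your approach anticipates that step and would give a more uniform treatment of the three Poisson-kernel theorems.
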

}
\boro{
\begin{proof}
Start with the companion Poisson kernel for
Gegenbauer polynomials \cite[(18)]{Maier18}
\begin{eqnarray}
&&\hspace{-1cm}\sum_{n=0}^\infty
\frac{\lambda+n}{\lambda}
\frac{n!t^n}{(2\lambda)_n}
C_n^\lambda(x)
C_n^\lambda(y)
\!=\!
\frac{1-t^2}{(1+t^2-2t\cos\theta\cos\phi)^{\lambda+1}}
\hyp21{\frac{\lambda+1}{2},\frac{\lambda+2}{2}}
{\lambda\!+\!\frac12}
{
\frac{1}{\chi^2}
}\nonumber\\
&&\hspace{3.7cm}=
-\frac{\Gamma(\lambda+\frac12)(1-t^2)}
{\sqrt{\pi}\Gamma(\lambda+1)t^{\lambda+1}(1-x^2)^{\frac
{\lambda+1}{2}}(1-y^2)^{\frac{\lambda+1}{2}}}
\frac{Q_{\lambda-1}^1(\chi)}
{\sqrt{\chi^2-1}},
\label{PoissonKerC2}
\end{eqnarray}
then replacing $\lambda\mapsto\lambda+\frac12$, 
and using \eqref{CnmuFerP} completes the proof.
\end{proof}
}

\boro{
\begin{thm}
\label{PoissonGegFerP3}
Let $t,\lambda\in{\mathbb C}$, 
$|t|<1$,
$x,y\in(-1,1)$,
$x=\cos\theta$, $y=\cos\theta'$. Then
\begin{eqnarray}
&&\hspace{-0.5cm}\sum_{n=0}^\infty
\frac{(\lambda+n+\frac12)(\lambda+n+\frac32)(2\lambda+1)_nt^n}
{n!}
{\sf P}_{n+\lambda}^{-\lambda}(x)
{\sf P}_{n+\lambda}^{-\lambda}(y)
\nonumber\\
&&\hspace{0.05cm}=\frac{1}{\pi t^{\lambda-\frac12} \Gamma(2\lambda+1)
(1-x^2)^\frac34(1-y^2)^\frac34}
\left(
\frac{Q_{\lambda-\frac12}^1(\chi)}
{\sqrt{\chi^2-1}}
+
\frac{(1-t^2)^2Q_{\lambda-\frac12}^2(\chi)}
{4t^3\sqrt{1-x^2}\sqrt{1-y^2}(\chi^2-1)}
\right).
\end{eqnarray}
\end{thm}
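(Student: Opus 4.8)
The plan is to deduce this identity from a \emph{second} companion Poisson kernel for the Gegenbauer polynomials, in the same spirit that Theorems~\ref{PoissonGegFerP} and~\ref{PoissonGegFerP2} were deduced from Maier's kernels \eqref{PoissonGegC} and \eqref{PoissonKerC2}. Applying the operator $1+\tfrac{t}{\lambda+1}\tfrac{\partial}{\partial t}$ to \eqref{PoissonKerC2} multiplies the $n$-th term of the Gegenbauer series by $\tfrac{\lambda+n+1}{\lambda+1}$, giving
\[
\sum_{n=0}^\infty\frac{(\lambda+n)(\lambda+n+1)}{\lambda(\lambda+1)}\frac{n!\,t^n}{(2\lambda)_n}C_n^\lambda(x)C_n^\lambda(y)=\Bigl(1+\tfrac{t}{\lambda+1}\tfrac{\partial}{\partial t}\Bigr)\!\left[-\frac{\Gamma(\lambda+\frac12)(1-t^2)}{\sqrt{\pi}\,\Gamma(\lambda+1)t^{\lambda+1}(1-x^2)^{\frac{\lambda+1}{2}}(1-y^2)^{\frac{\lambda+1}{2}}}\frac{Q_{\lambda-1}^1(\chi)}{\sqrt{\chi^2-1}}\right],
\]
and then the substitution $\lambda\mapsto\lambda+\tfrac12$ followed by the conversion \eqref{CnmuFerP} turns the left-hand side into the left-hand side of the present theorem, up to an $n$-independent constant and the factor $(1-x^2)^{-\lambda/2}(1-y^2)^{-\lambda/2}$. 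Equivalently, the left-hand side here is obtained from that of Theorem~\ref{PoissonGegFerP2} by applying $t\tfrac{\partial}{\partial t}+\lambda+\tfrac32$ (which multiplies the $n$-th term by $n+\lambda+\tfrac32$), so one may just differentiate the closed form already proved in Theorem~\ref{PoissonGegFerP2}; either way, all the remaining work is on the right-hand side.

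To evaluate the right-hand side I would separate the action of $t\tfrac{\partial}{\partial t}+\lambda+\tfrac32$ into the piece hitting the rational prefactor $t^{-\lambda-\frac32}(1-t^2)$ and the piece hitting $Q^1_{\lambda-\frac12}(\chi)/\sqrt{\chi^2-1}$. Since $t\tfrac{\partial}{\partial t}+\lambda+\tfrac32$ annihilates $t^{-\lambda-\frac32}$ and doubles $t^{-\lambda+\frac12}$, the first piece contributes, after collecting constants, exactly the term $\dfrac{Q^1_{\lambda-\frac12}(\chi)}{\sqrt{\chi^2-1}}$ of the claimed answer. For the second piece I would combine $\partial_t\chi=\dfrac{t-\cos\theta\cos\theta'}{\sin\theta\sin\theta'}$ with a derivative relation for the associated Legendre function of the second kind from \cite[\S14.10]{NIST:DLMF} of the shape $\dfrac{\mathrm d}{\mathrm d\chi}\dfrac{Q^{1}_{\nu}(\chi)}{\sqrt{\chi^2-1}}=c_{\nu}\,\dfrac{Q^{2}_{\nu}(\chi)}{\chi^2-1}$ for a suitable constant $c_\nu$, in order to generate the $Q^2_{\lambda-\frac12}(\chi)/(\chi^2-1)$ term, and then reorganize the resulting $t$-, $x$-, $y$-dependent factors using $\sin^2\theta=1-x^2$, $\sin^2\theta'=1-y^2$ and the factorization $\chi^2-1=\dfrac{(1+t^2-2t\cos\theta\cos\theta')^2-4t^2\sin^2\theta\sin^2\theta'}{4\sin^2\theta\sin^2\theta'}$ into the compact factor $\dfrac{(1-t^2)^2}{4t^3\sqrt{1-x^2}\sqrt{1-y^2}\,(\chi^2-1)}$.

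The step I expect to be the real obstacle is this last reorganization: once the Legendre derivative has been carried out, the $Q^2$ term appears as a product of several powers of $t$, of $1-t^2$, of $\sin\theta$, $\sin\theta'$ and of $t-\cos\theta\cos\theta'$, times $Q^2_{\lambda-\frac12}(\chi)/(\chi^2-1)$, and it is not transparent that this equals the single compact factor in the statement; the collapse happens only after the $\chi^2-1$ factorization is used to absorb $t-\cos\theta\cos\theta'$ and the leftover $t$-powers into $(1-t^2)^2$, and after the degree-dependent constant from the Legendre derivative relation has been reconciled with the constants produced by \eqref{CnmuFerP} and the substitution $\lambda\mapsto\lambda+\tfrac12$. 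As in the rest of the paper, it is cleanest to establish the identity first on an open region where $\chi>1$, $|t|<1$ and $1-t^2$, $\chi^2-1$ are nonzero (so all Legendre functions are honest convergent series), and then extend by analytic continuation; everything else is routine differentiation and bookkeeping.
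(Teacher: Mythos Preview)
Your approach is essentially the paper's: differentiate Theorem~\ref{PoissonGegFerP2} with respect to $t$. The paper does this by first moving the prefactor $t^{\lambda+\frac32}/(1-t^2)$ to the left, differentiating, and then reinvoking Theorem~\ref{PoissonGegFerP2} to eliminate the extra sum that appears; your packaging via the operator $t\tfrac{\partial}{\partial t}+\lambda+\tfrac32$ is a slightly cleaner way to do the same thing and avoids the need for that second substitution.

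There is, however, one concrete error. Your formula $\partial_t\chi=\dfrac{t-\cos\theta\cos\theta'}{\sin\theta\sin\theta'}$ is computed from the displayed definition of $\chi$, but that definition is missing a factor of $t$ in the denominator: the correct variable is $\chi=\dfrac{1+t^2-2t\cos\theta\cos\theta'}{2t\sin\theta\sin\theta'}$, and this is exactly what the paper uses in its proof, where it records $\partial_t\chi=-\dfrac{1-t^2}{2t^2\sqrt{1-x^2}\sqrt{1-y^2}}$. With the correct $\partial_t\chi$ the ``real obstacle'' you anticipate simply evaporates: the $Q^2$ contribution comes out as
\[
-\frac{1-t^2}{t^{\lambda+\frac32}}\cdot C\cdot t\,\partial_t\chi\cdot\frac{\mathrm d}{\mathrm d\chi}\frac{Q^1_{\lambda-\frac12}(\chi)}{\sqrt{\chi^2-1}}
=\frac{(1-t^2)^2\,C}{2t^{\lambda+\frac52}\sqrt{1-x^2}\sqrt{1-y^2}}\cdot\frac{Q^2_{\lambda-\frac12}(\chi)}{\chi^2-1},
\]
which is already the compact factor in the statement after writing $t^{\lambda+\frac52}=t^{\lambda-\frac12}t^3$. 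No absorption of a $t-\cos\theta\cos\theta'$ factor via a $\chi^2-1$ identity is needed. The paper additionally cites a three-term recurrence for $Q_\nu^\mu$, but with your operator formulation that step is not required.
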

}

\boro{
\begin{proof}
Start with 
Theorem \ref{PoissonGegFerP2}, and put all powers
of $t$ on the left-hand side of the equation and
differentiate with respect to $t$ using
\[
\frac{\mathrm d}{{\mathrm d}t}
\left(
\frac{t^{\lambda+n+\frac32}}{1-t^2}\right)
=\frac{2t^{\lambda+n+\frac52}}{(1-t^2)^2}
+\frac{(\lambda+n+\frac32)t^{\lambda+n+\frac12}}
{1-t^2},
\]
\[
\frac{{\partial}\chi}{{\partial}t}
=\frac{-(1-t^2)}{2t^2\sqrt{1-x^2}\sqrt{1-y^2}},
\]
and the following 
derivative formula for the associated Legendre function 
of the second kind (cf. \cite[Remark 4]{CohlCostasSantos2020})
\[
\frac{{\mathrm d}}
{{\mathrm d}\chi}
\frac{Q_{\lambda-\frac12}^m(\chi)}
{(1-\chi^2)^{\frac{m}{2}}}
=
\frac{Q_{\lambda-\frac12}^{m+1}(\chi)}
{(\chi^2-1)^{\frac{m+1}{2}}},
\]
for $m\in{\mathbb N}_0$.
Then applying Theorem \ref{PoissonGegFerP2}
again and the recurrence relation
\[
Q_{\lambda-\frac12}^{\mu+1}(\chi)
=\frac{1}{\sqrt{\chi^2-1}}
\left(
-(\lambda+\mu+\tfrac12)\chi Q_{\lambda-\frac12}^\mu(\chi)
+(\lambda-\mu+\tfrac12)Q_{\lambda+\frac12}^\mu(\chi)
\right),
\]
completes the proof.
\end{proof}
}

\boro{
\begin{cor}
\label{PoissonGeg1}
Let $t,\lambda\in{\mathbb C}$, $|t|<1$,
$x,y\in(-1,1)$, $x=\cos\theta$, $y=\cos\theta'$. Then
\begin{eqnarray}
&&\hspace{-0.55cm}\sum_{n=0}^\infty
(\lambda+n)(\lambda+n+1)
\frac{n!t^n}{(2\lambda)_n}
C_n^\lambda(x)
C_n^\lambda(y)\nonumber\\
&&\hspace{0.05cm}=\frac{\Gamma(\lambda+\frac12)}{\sqrt{\pi} 
t^{\lambda-1} \Gamma(\lambda)
(1-x^2)^{\frac{\lambda+1}{2}}(1-y^2)^{\frac{\lambda+1}{2}}}
\left(
\frac{Q_{\lambda-1}^1(\chi)}
{\sqrt{\chi^2-1}}
+
\frac{(1-t^2)^2Q_{\lambda-1}^2(\chi)}
{4t^3\sqrt{1-x^2}\sqrt{1-y^2}(\chi^2-1)}
\right).
\end{eqnarray}
\end{cor}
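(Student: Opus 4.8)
The plan is to observe that this corollary is simply the Gegenbauer-polynomial reformulation of Theorem~\ref{PoissonGegFerP3}, so the task is to invert the substitution that produced the Ferrers version there. First I would take the identity of Theorem~\ref{PoissonGegFerP3} and perform the parameter replacement $\lambda\mapsto\lambda-\tfrac12$. Under this shift the coefficient $(\lambda+n+\tfrac12)(\lambda+n+\tfrac32)$ becomes $(\lambda+n)(\lambda+n+1)$, the Pochhammer symbol $(2\lambda+1)_n$ becomes $(2\lambda)_n$, the Ferrers functions ${\sf P}_{n+\lambda}^{-\lambda}$ become ${\sf P}_{n+\lambda-\frac12}^{\frac12-\lambda}$, and on the right-hand side $t^{\lambda-\frac12}$, $\Gamma(2\lambda+1)$ and $Q_{\lambda-\frac12}^{m}(\chi)$ become $t^{\lambda-1}$, $\Gamma(2\lambda)$ and $Q_{\lambda-1}^{m}(\chi)$, with $\chi$ itself unaffected.

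Next I would clear the Ferrers functions using \eqref{CnmuFerP} with $\mu=\lambda$, i.e.
\[
{\sf P}_{n+\lambda-\frac12}^{\frac12-\lambda}(x)
=\frac{2^{\lambda-\frac12}\Gamma(\lambda)\,n!}{\sqrt{\pi}\,\Gamma(2\lambda+n)}\,(1-x^2)^{\frac{\lambda}{2}-\frac14}\,C_n^\lambda(x),
\]
and likewise for $y$. Substituting the product of these into the shifted identity and using $(2\lambda)_n=\Gamma(2\lambda+n)/\Gamma(2\lambda)$ to collapse $\dfrac{(2\lambda)_n\,(n!)^2}{n!\,\Gamma(2\lambda+n)^2}=\dfrac{n!}{\Gamma(2\lambda)^2\,(2\lambda)_n}$, one can solve for the series $\sum_{n\ge 0}(\lambda+n)(\lambda+n+1)\dfrac{n!\,t^n}{(2\lambda)_n}C_n^\lambda(x)C_n^\lambda(y)$. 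The powers of $(1-x^2)$ and $(1-y^2)$ combine as $(1-x^2)^{\frac14-\frac{\lambda}{2}-\frac34}=(1-x^2)^{-\frac{\lambda+1}{2}}$ (and symmetrically in $y$), matching the prefactor of the claim, the factor $t^{-(\lambda-1)}$ carries over directly, and the remaining constant is $\dfrac{\Gamma(2\lambda)}{2^{2\lambda-1}\Gamma(\lambda)^2}$. Invoking the Legendre duplication formula $\Gamma(2\lambda)=2^{2\lambda-1}\pi^{-1/2}\Gamma(\lambda)\Gamma(\lambda+\tfrac12)$ rewrites this as $\dfrac{\Gamma(\lambda+\frac12)}{\sqrt{\pi}\,\Gamma(\lambda)}$, which is exactly the constant in the corollary; the identity then holds for all admissible $\lambda$ by analytic continuation.

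An equivalent route, never leaving the Gegenbauer world, is to imitate the derivation of Theorem~\ref{PoissonGegFerP3}: start from the companion Poisson kernel \eqref{PoissonKerC2}, move every power of $t$ to the series side so that each term carries $t^{\lambda+n+1}/(1-t^2)$, differentiate in $t$ using $\dfrac{\partial\chi}{\partial t}=\dfrac{-(1-t^2)}{2t^2\sqrt{1-x^2}\sqrt{1-y^2}}$ and the derivative rule $\dfrac{\mathrm d}{\mathrm d\chi}\dfrac{Q_{\lambda-1}^m(\chi)}{(1-\chi^2)^{m/2}}=\dfrac{Q_{\lambda-1}^{m+1}(\chi)}{(\chi^2-1)^{(m+1)/2}}$, and then feed \eqref{PoissonKerC2} back in together with the contiguous relation raising the order of $Q_{\lambda-1}^{\mu}$. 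Either way there is no delicate analytic point—convergence for $|t|<1$ is inherited from \eqref{PoissonKerC2}, and every step is a finite rational manipulation—so the only real obstacle is the bookkeeping of the powers of $2$, the gamma quotients, and the exponents of $1-x^2$, $1-y^2$, $t$ and $\chi^2-1$. I would therefore carry out the first route, which keeps that bookkeeping to a minimum.
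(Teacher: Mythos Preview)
Your proposal is correct and follows essentially the same approach as the paper, which simply says to apply \eqref{CnmuFerP} to Theorem~\ref{PoissonGegFerP3}. Your explicit parameter shift $\lambda\mapsto\lambda-\tfrac12$, the Pochhammer simplification, and the use of the Legendre duplication formula are exactly the bookkeeping needed to unpack that one-line proof, and your computations check out.
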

}
\begin{proof}
Using \eqref{CnmuFerP} in Theorem \ref{PoissonGegFerP3},
completes the proof.
\end{proof}

\boro{
\begin{cor}
\label{alggencorGeg}
Let $t,\lambda\in{\mathbb C}$, 
$|t|<1$,
$x,y\in(-1,1)$,
$x=\cos\theta$, $y=\cos\theta'$. Then, one has the following
algebraic generating functions for the Gegenbauer polynomials:
\begin{eqnarray}
&&\label{alggen0}
\sum_{n=0}^\infty t^n C_n^\lambda(x)
=\frac{1}{(1+t^2-2tx)^{\lambda}},\\
&&\label{alggen1}
\sum_{n=0}^\infty(\lambda+n)t^n C_n^\lambda(x)
=\frac{\lambda(1-t^2)}{(1+t^2-2tx)^{\lambda+1}},\\
&&\label{alggen2}
\sum_{n=0}^\infty(\lambda+n)(\lambda+n+1)t^n C_n^\lambda(x)
=\frac{\lambda(\lambda+1)(1-t^2)^2}{(1+t^2-2tx)^{\lambda+2}}
-\frac{2\lambda t^2}{(1+t^2-2tx)^{\lambda+1}}.
\end{eqnarray}
\end{cor}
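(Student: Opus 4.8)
The plan is to obtain \eqref{alggen0} from the classical generating function for Gegenbauer polynomials and then to derive \eqref{alggen1} and \eqref{alggen2} from it by applying the Euler (homogeneity) operator $t\,\mathrm d/\mathrm dt$ once and twice, respectively. Note that the variable $y=\cos\theta'$ in the hypotheses plays no role here; it is inherited from the Poisson-kernel statements of the previous subsection, and indeed \eqref{alggen0}--\eqref{alggen2} are exactly what those kernels degenerate to in the limit $y\to1$ (see below).

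First I would recall the classical generating function \cite[(18.12.4)]{NIST:DLMF}, $\sum_{n=0}^\infty C_n^\lambda(x)t^n=(1+t^2-2tx)^{-\lambda}$, valid for $x\in(-1,1)$ and $|t|<1$; this is precisely \eqref{alggen0}, and the radius of convergence justifies the termwise differentiations used below. Since $\bigl(\lambda+t\,\tfrac{\mathrm d}{\mathrm dt}\bigr)t^n=(\lambda+n)t^n$, applying the operator $\lambda+t\,\tfrac{\mathrm d}{\mathrm dt}$ to \eqref{alggen0} gives the left-hand side of \eqref{alggen1}. On the right one computes $\tfrac{\mathrm d}{\mathrm dt}(1+t^2-2tx)^{-\lambda}=-\lambda(2t-2x)(1+t^2-2tx)^{-\lambda-1}$, multiplies by $t$, adds $\lambda(1+t^2-2tx)^{-\lambda}$, and uses the elementary identity $(1+t^2-2tx)-t(2t-2x)=1-t^2$ to combine the two terms into $\lambda(1-t^2)(1+t^2-2tx)^{-\lambda-1}$, which is the right-hand side of \eqref{alggen1}.

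For \eqref{alggen2} I would apply once more the operator $\lambda+1+t\,\tfrac{\mathrm d}{\mathrm dt}$ to \eqref{alggen1}; on the left this turns the coefficient $(\lambda+n)t^n$ into $(\lambda+n)(\lambda+n+1)t^n$, giving $\sum_{n=0}^\infty(\lambda+n)(\lambda+n+1)t^nC_n^\lambda(x)$. On the right one differentiates $\lambda(1-t^2)(1+t^2-2tx)^{-\lambda-1}$, multiplies by $t$, adds $(\lambda+1)$ times the same expression, and again invokes $(1+t^2-2tx)-t(2t-2x)=1-t^2$ to collapse the two $(1+t^2-2tx)^{-\lambda-2}$-terms, leaving exactly $\lambda(\lambda+1)(1-t^2)^2(1+t^2-2tx)^{-\lambda-2}-2\lambda t^2(1+t^2-2tx)^{-\lambda-1}$, i.e.\ \eqref{alggen2}.

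The only real work is the bookkeeping in this last step, and it is routine; there is no genuine obstacle. An alternative would be to read \eqref{alggen0}--\eqref{alggen2} directly off \eqref{PoissonGegC}, \eqref{PoissonKerC2}, and Corollary~\ref{PoissonGeg1} by setting $y=1$ and using $C_n^\lambda(1)=(2\lambda)_n/n!$, so that the weighted sums collapse to the desired plain sums; but that route forces one to evaluate the indeterminate limit $\chi\to\infty$, in which the Legendre $Q$-factors vanish while the prefactors $(1-y^2)^{-\cdots}$ diverge, so I would prefer the self-contained operator argument and, if desired, only mention the Poisson-kernel connection as a remark.
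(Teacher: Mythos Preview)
Your proof is correct. The operator identities $(\lambda+t\,\mathrm d/\mathrm dt)t^n=(\lambda+n)t^n$ and the algebra leading to \eqref{alggen1} and \eqref{alggen2} check out as you describe, and termwise differentiation is justified by absolute convergence for $|t|<1$, $x\in(-1,1)$.

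Your route, however, differs from the paper's. The paper does precisely what you sketch as the ``alternative'': it takes the Poisson-type identities \eqref{PoissonGegC}, \eqref{PoissonKerC2} and Corollary~\ref{PoissonGeg1}, sets $y=1-\tfrac12\epsilon^2$, uses $C_n^\lambda(1)=(2\lambda)_n/n!$ on the left, and on the right tracks the asymptotics $\sqrt{1-y^2}\sim\epsilon$ and $\chi\sim(1+t^2-2tx)/(2t\sqrt{1-x^2}\,\epsilon)$ together with the large-argument behavior of $Q_\nu^\mu$ to collapse the indeterminate products to the algebraic right-hand sides. This explains why the otherwise idle hypothesis $y=\cos\theta'$ appears in the statement. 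Your Euler-operator argument is more elementary and self-contained, and it sidesteps the limit analysis you rightly flagged as delicate; the paper's approach, by contrast, makes explicit that \eqref{alggen0}--\eqref{alggen2} are the $y\to1$ boundary values of the two-variable kernels of Theorems~\ref{PoissonGegFerP}--\ref{PoissonGegFerP3}, which is the structural point of placing this corollary in the Poisson-kernel subsection.
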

}
\begin{proof}
Consider the limit as $y\to 1^{-}$ as follows.
Let $y=1-\tfrac12\epsilon^2$ with $\epsilon>0$, such that
$\epsilon\ll 1$. Then $\sqrt{1-y^2}\sim\epsilon$
and 
\[
\chi\sim\frac{1+t^2-2tx}{2t\sqrt{1-x^2}\epsilon},
\]
as $\epsilon\to 0^{+}$. 
Note that
\cite[Table 18.6.1]{NIST:DLMF}
$C_n^\lambda(1)=(2\lambda)_n/n!$.
Applying this and the above estimates 
in
\eqref{PoissonGegC}, 
\eqref{PoissonKerC2}
and Corollary \ref{PoissonGeg1}, then taking the limit as
$\epsilon\to{0}^{+}$, completes the proof.
\end{proof}

\boro{
\begin{cor}
Let $t,\lambda\in{\mathbb C}$, $|t|<1$,
$x,y\in(-1,1)$, $x=\cos\theta$, $y=\cos\theta'$. Then 
\begin{eqnarray}
&&\label{Palggen0}
\sum_{n=0}^\infty \frac{t^n(2\lambda+1)_n}{n!} 
{\sf P}_{n+\lambda}^{-\lambda}(x)
=\frac{(1-x^2)^{\frac{\lambda}{2}}}
{2^\lambda\Gamma(\lambda+1)(1+t^2-2tx)^{\lambda+\frac12}},\\
&&\label{Palggen1}
\sum_{n=0}^\infty(\lambda+n+\tfrac12) \frac{t^n(2\lambda+1)_n}{n!} 
{\sf P}_{n+\lambda}^{-\lambda}(x)=\frac{(\lambda+\tfrac12)(1-t^2)
(1-x^2)^{\frac{\lambda}{2}}}{2^\lambda\Gamma(\lambda+1)(1+t^2
-2tx)^{\lambda+\frac32}},\\
&&\label{Palggen2}
\sum_{n=0}^\infty(\lambda+n+\tfrac12)(\lambda+n+\tfrac32) 
\frac{t^n(2\lambda+1)_n}{n!} {\sf P}_{n+\lambda}^{-\lambda}(x)
\nonumber\\
&&\hspace{3cm}=\frac{(1-x^2)^{\frac{\lambda}{2}}(\lambda
+\tfrac12)}{2^\lambda\Gamma(\lambda+1)}\left(
\frac{(\lambda+\tfrac32)(1-t^2)^2}{(1+t^2-2tx)^{\lambda+\tfrac52}}
-\frac{2t^2}{(1+t^2-2tx)^{\lambda+\tfrac32}}\right).
\end{eqnarray}
\end{cor}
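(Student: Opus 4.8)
The plan is to obtain all three identities from the algebraic generating functions for the Gegenbauer polynomials established in Corollary~\ref{alggencorGeg} by converting each $C_n^{\lambda+\frac12}(x)$ inside the sums to a Ferrers function of the first kind. First I would rewrite \eqref{CnmuFerP} with $\mu\mapsto\lambda+\frac12$, which reads
\[
C_n^{\lambda+\frac12}(x)=\frac{\sqrt{\pi}\,\Gamma(2\lambda+n+1)}{2^{\lambda}\Gamma(\lambda+\frac12)\,n!\,(1-x^2)^{\frac{\lambda}{2}}}\,{\sf P}_{n+\lambda}^{-\lambda}(x).
\]
Solving for ${\sf P}_{n+\lambda}^{-\lambda}(x)$, using $\Gamma(2\lambda+n+1)=\Gamma(2\lambda+1)(2\lambda+1)_n$ to cancel the Pochhammer symbol, and applying the Legendre duplication formula $\sqrt{\pi}\,\Gamma(2\lambda+1)=2^{2\lambda}\Gamma(\lambda+\frac12)\Gamma(\lambda+1)$ to collapse the constant, I obtain the single key identity
\[
\frac{(2\lambda+1)_n}{n!}\,{\sf P}_{n+\lambda}^{-\lambda}(x)=\frac{(1-x^2)^{\frac{\lambda}{2}}}{2^{\lambda}\Gamma(\lambda+1)}\,C_n^{\lambda+\frac12}(x).
\]

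With this in hand, the left-hand side of each of \eqref{Palggen0}, \eqref{Palggen1}, \eqref{Palggen2} equals $\dfrac{(1-x^2)^{\lambda/2}}{2^{\lambda}\Gamma(\lambda+1)}$ times a Gegenbauer sum, and I would simply substitute $\lambda\mapsto\lambda+\frac12$ into \eqref{alggen0}, \eqref{alggen1}, \eqref{alggen2} respectively. The point is that under this shift the Gegenbauer weights $(\lambda+n)$ and $(\lambda+n)(\lambda+n+1)$ become precisely $(\lambda+n+\frac12)$ and $(\lambda+n+\frac12)(\lambda+n+\frac32)$, exactly matching the weights appearing in \eqref{Palggen1} and \eqref{Palggen2}. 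Multiplying the shifted right-hand sides of \eqref{alggen0}, \eqref{alggen1}, \eqref{alggen2} by $\dfrac{(1-x^2)^{\lambda/2}}{2^{\lambda}\Gamma(\lambda+1)}$ and, in the last case, factoring out the common $(\lambda+\frac12)$, reproduces \eqref{Palggen0}, \eqref{Palggen1}, \eqref{Palggen2}.

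The only thing requiring care --- and it is bookkeeping rather than a genuine obstacle --- is verifying that the prefactor $\sqrt{\pi}\,\Gamma(\lambda+\frac12)/\Gamma(2\lambda+1)$ collapses to $1/(2^{2\lambda}\Gamma(\lambda+1))$, i.e., that the duplication formula does its job so that the $2^{\lambda}$ coming from \eqref{CnmuFerP} combines with it to give the claimed $1/(2^{\lambda}\Gamma(\lambda+1))$. Convergence of the three series for $|t|<1$ needs no separate argument, as it is inherited term-by-term from Corollary~\ref{alggencorGeg}. I would also note in passing that the same three identities can be reached by taking the confluent limit $y\to1^{-}$, as in the proof of Corollary~\ref{alggencorGeg}, directly in Theorems~\ref{PoissonGegFerP}, \ref{PoissonGegFerP2}, \ref{PoissonGegFerP3}, but the route through Corollary~\ref{alggencorGeg} is the most economical.
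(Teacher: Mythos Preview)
Your proposal is correct and follows essentially the same route as the paper's proof: start from Corollary~\ref{alggencorGeg}, replace the Gegenbauer polynomials by Ferrers functions via \eqref{CnmuFerP} with $\mu\mapsto\lambda+\tfrac12$, and simplify. The paper states this in one line, whereas you spell out the duplication-formula bookkeeping and note the alternative $y\to1^{-}$ route, but the argument is the same.
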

}
\boro{
\begin{proof}
Starting with Corollary \ref{alggencorGeg}
and replacing the Gegenbauer polynomials using 
\eqref{CnmuFerP} completes the proof.
\end{proof}
}

\medskip 


\subsection{\boro{Closure relations}}

\boro{
Closure relations are infinite
series representations of the
Dirac delta distribution.
Let $n,m\in \mathbb N_0$,
the closure relations
are obtained from a complete 
set of orthogonal functions $\psi_n(x)$ 
on $(a,b)$ with a weight function $w(x)$,
with respect to an inner product defined as
\begin{equation}
\langle f,g\rangle:=\int_a^b f(x) 
{g(x)} 
w(x) \,{\mathrm d}x,
\end{equation}
so that the moments $(m_n)$ are bounded, where
\begin{equation}
m_n:=\langle x^n,1\rangle.
\end{equation}
Then, the integral orthogonality relation is given by
\begin{equation}
\label{orthinner}
\langle\psi_n,\psi_m\rangle=
\int_{a}^b\psi_n(x)\psi_m(x)w(x)\,{\mathrm d}x=h_n\delta_{n,m}.
\end{equation}
}

\boro{
\begin{prop}
Let $x,x'\in[a,b]$ and
the Poisson kernel is
non-negative.
Then, one has the following closure relation:
\begin{equation}
\sum_{n=0}^\infty
\frac{w(x)}{h_n}
\psi_n(x)\psi_n(x')=\delta(x-x'),
\label{Closure}
\end{equation}
where both the left- and right-hand sides should 
be treated as distributions.
\end{prop}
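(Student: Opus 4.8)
The plan is to read \eqref{Closure} as an identity of distributions in $x$ with $x'$ held fixed (i.e.\ as a reproducing identity) and to prove it by pairing both sides against a test function. For $\phi$ smooth and supported in a compact subset of $(a,b)$, pairing the right-hand side with $\phi$ yields $\phi(x')$, while pairing the left-hand side and interchanging sum and integral yields
\[
\int_a^b\left(\sum_{n=0}^\infty\frac{w(x)}{h_n}\psi_n(x)\psi_n(x')\right)\phi(x)\,{\mathrm d}x=\sum_{n=0}^\infty\frac{\langle\phi,\psi_n\rangle}{h_n}\,\psi_n(x').
\]
Thus \eqref{Closure} is precisely the statement that the Fourier-type expansion of $\phi$ in the orthogonal system $\{\psi_n\}$ converges to $\phi$ at the point $x'$. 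Since $(a,b)$ is bounded and the moments $(m_n)$ are finite, the system $\{\psi_n/\sqrt{h_n}\}$ is complete in $L^2((a,b),w\,{\mathrm d}x)$, so by Parseval the expansion of $\phi$ converges to $\phi$ in $L^2$; what remains is to promote this to pointwise convergence and to justify the termwise integration above.

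This is where the hypothesis that the Poisson kernel is non-negative enters. I would work with the Abel means
\[
(P_t\phi)(x'):=\sum_{n=0}^\infty t^n\frac{\langle\phi,\psi_n\rangle}{h_n}\,\psi_n(x')=\int_a^b\phi(x)\,K_t(x,x')\,{\mathrm d}x,\qquad K_t(x,x'):=\sum_{n=0}^\infty\frac{w(x)\,t^n}{h_n}\psi_n(x)\psi_n(x'),
\]
which converge absolutely for $0<t<1$. The kernel $K_t$ is a multiple of the Poisson kernel, hence non-negative; integrating it in $x$ and using orthogonality shows its total mass tends to $1$ as $t\to1^-$; and its mass off any neighbourhood of $x'$ tends to $0$ as $t\to1^-$. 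These three properties make $\{K_t(\cdot,x')\}_{t\uparrow1}$ an approximate identity, so $(P_t\phi)(x')\to\phi(x')$ uniformly for $x'$ in compact subsets of $(a,b)$. Since the Abel means of an $L^2$-convergent orthogonal expansion converge to the same limit, this pins the value of the Abel-regularised (and then the genuine) series at $x'$ to $\phi(x')$, which is \eqref{Closure}.

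The main obstacle is the concentration estimate — that the off-diagonal mass of $K_t$ vanishes as $t\to1^-$ — since non-negativity and the normalisation alone do not make $K_t$ an approximate identity. For the systems treated in this paper this is immediate from the closed-form Poisson kernels: the right-hand sides in Theorems \ref{PoissonGegFerP}, \ref{PoissonGegFerP2}, \ref{PoissonGegFerP3} are bounded uniformly in $t\in(0,1)$ away from the diagonal $x=x'$ and are singular exactly on it, which yields both the concentration and the value of the total mass. For a general system one instead invokes the classical Abel-summability theorem for expansions in orthogonal polynomials on a finite interval. In either case the limiting kernel is the Dirac delta, so \eqref{Closure} holds in $\mathcal D'((a,b))$.
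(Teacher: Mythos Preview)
Your approach is genuinely different from the paper's. The paper gives only a \emph{formal} argument (and labels it as such): it writes $f(x)=\sum_n a_n\psi_n(x)$, reads off $a_n=h_n^{-1}\langle f,\psi_n\rangle$ by orthogonality, substitutes back, and identifies the resulting kernel with $\delta(x-x')$ via the sifting property. No attention is paid to convergence, and in particular the hypothesis that the Poisson kernel is non-negative is never invoked; the paper simply cites \cite{IsmailZhangZhou} for a rigorous version. By contrast, you attempt an honest proof by Abel-regularising the series, recognising $K_t$ as (a multiple of) the Poisson kernel, and arguing that non-negativity plus total-mass normalisation plus off-diagonal concentration make $\{K_t\}_{t\uparrow 1}$ an approximate identity. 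This is exactly the right shape for a rigorous argument and is what the non-negativity hypothesis is there for.

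That said, you correctly flag the one real gap: the concentration estimate does not follow from non-negativity and normalisation alone, and your fallback to the explicit kernels of Theorems~\ref{PoissonGegFerP}--\ref{PoissonGegFerP3} covers only the Gegenbauer/Ferrers case, not the general $\psi_n$ of the proposition. Your final sentence also glosses over the passage from Abel summability to ordinary (distributional) summability; the statement only asks for equality of distributions, so it would suffice to show $\langle K_t,\phi\rangle\to\phi(x')$ for test functions $\phi$ and interpret the left-hand side of \eqref{Closure} as that limit, but you should say so explicitly rather than appealing to a ``genuine'' series. In short: your argument is more substantive than the paper's formal derivation and buys actual rigour in the concrete cases at hand, at the cost of needing an extra ingredient (concentration) that must be supplied case by case.
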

}

\boro{
\begin{proof}{\!\bf(formal)}
One may expand a
function $f\in L^2(a,b)$ in the infinite
series of the orthogonal functions as
\begin{equation}\label{fourierf}
f(x)=\sum_{n=0}^\infty a_n\psi_n(x).
\end{equation}
Now multiplying both sides of
this equation by
$w(x)\psi_m(x)$ and integrating
over the interval $(a,b)$
produces
\[
a_n=\frac{1}{h_n}
\int_a^b f(x)\psi_n(x)w(x)\,{\mathrm d}x.
\]
Multiplying the above expression by $\psi_n(x')$
and summing over all non-negative
integers produces 
\[
f(x')=\int_a^b f(x)
\left(\,
\sum_{n=0}^\infty
\frac{w(x)}{h_n}
\psi_n(x)\psi_n(x') \right)\,{\mathrm d}x
=\int_a^b f(x) \delta(x-x')\,{\mathrm d}x,
\]
where we have used the sifting property
of the Dirac delta distribution
\cite[(1.16.11)]{NIST:DLMF}.
This completes the proof.
See \cite[Theorem 2.1]{IsmailZhangZhou} for a 
different
proof.
\end{proof}
}

\boro{
The following is the closure relation for 
Ferrers function of the first kind. 
\begin{thm}
Let $x,x'\in(-1,1)$, $\mu\in\mathbb C$, $2\mu+1\not
\in-\mathbb N_0$. Then
\begin{equation}
\sum_{n=0}^\infty\frac{(\mu+n+\tfrac12)\Gamma(2\mu+n+1)}{n!}
{\sf P}_{n+\mu}^{-\mu}(x){\sf P}_{n+\mu}^{-\mu}(x')
=\delta(x-x').
\label{compPb}
\end{equation}
\end{thm}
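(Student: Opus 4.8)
The plan is to read \eqref{compPb} as nothing more than the closure (completeness) relation attached, via the general Proposition above, to the orthogonal system $\psi_n(x):={\sf P}_{n+\mu}^{-\mu}(x)$, $n\in\mathbb N_0$, on the interval $(a,b)=(-1,1)$ with \emph{constant} weight $w(x)\equiv 1$. Indeed, the orthogonality relation \eqref{orthogonalityPmk} is exactly the statement \eqref{orthinner} with squared norm
\[
h_n=\frac{n!}{\Gamma(2\mu+n+1)\left(\mu+n+\tfrac12\right)},
\]
so that $w(x)/h_n=\dfrac{\left(\mu+n+\tfrac12\right)\Gamma(2\mu+n+1)}{n!}$, and the closure relation \eqref{Closure} then reproduces the right-hand side $\delta(x-x')$ verbatim. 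Thus the whole content beyond what is already established reduces to two points: (i) the completeness of $\{\psi_n\}$ in $L^2(-1,1)$, and (ii) justifying the hypothesis of the Proposition, i.e. the non-negativity of the attached Poisson kernel (or, equivalently, making the underlying Abel-summation limit rigorous).

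For (i) I would use the Corollary \eqref{FerPnlammlan}, which gives
\[
{\sf P}_{n+\mu}^{-\mu}(x)=\frac{n!}{(2\mu+1)_n}\,{\sf P}_{\mu}^{-\mu}(x)\,C_n^{\mu+\frac12}(x),\qquad
{\sf P}_{\mu}^{-\mu}(x)=\frac{(1-x^2)^{\mu/2}}{2^\mu\Gamma(\mu+1)},
\]
the latter factor being independent of $n$. Hence, for $2\mu+1\notin-\mathbb N_0$, the linear span of $\{\psi_n\}_{n\ge0}$ equals $(1-x^2)^{\mu/2}$ times the span of the Gegenbauer polynomials $\{C_n^{\mu+\frac12}\}_{n\ge0}$, i.e. $(1-x^2)^{\mu/2}\,\mathbb C[x]$. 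For real $\mu>-\tfrac12$ the system $\{C_n^{\mu+\frac12}\}$ is complete in $L^2\big((-1,1),(1-x^2)^{\mu}\,{\mathrm d}x\big)$; transferring through the unitary map $g\mapsto(1-x^2)^{\mu/2}g$ from that space onto $L^2(-1,1)$ shows at once that $\{\psi_n\}$ is complete in $L^2(-1,1)$. The Proposition then yields \eqref{compPb} for real $\mu>-\tfrac12$.

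For (ii), note that the Poisson kernel of this system is
\[
K_t(x,y)=\sum_{n=0}^\infty\frac{t^n}{h_n}\psi_n(x)\psi_n(y)
=\Gamma(2\mu+1)\sum_{n=0}^\infty\frac{\left(\mu+n+\tfrac12\right)(2\mu+1)_n t^n}{n!}\,{\sf P}_{n+\mu}^{-\mu}(x){\sf P}_{n+\mu}^{-\mu}(y),
\]
which is summed in closed form in Theorem \ref{PoissonGegFerP2}; for real $\mu>-\tfrac12$ the resulting expression (a multiple of $Q_{\mu-\frac12}^1(\chi)/\sqrt{\chi^2-1}$ with $\chi\ge1$) is sign-definite, so the hypothesis of the Proposition is met, and moreover one may argue directly and rigorously that $\int_{-1}^1 K_t(x,x')\varphi(x)\,{\mathrm d}x\to\varphi(x')$ as $t\to1^-$ for every test function $\varphi$, using the explicit closed form and the known behaviour of $Q_{\mu-\frac12}^1$ near $\chi=1$ (noting that $t\to1^-$ with $x\to x'$ forces $\chi\to1$). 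Finally, the statement for complex $\mu$ with $2\mu+1\notin-\mathbb N_0$ follows by analytic continuation in $\mu$, both sides of \eqref{compPb} being interpreted as distributions tested against a fixed $C_c^\infty(-1,1)$ function: the series coefficients are analytic in $\mu$ and, paired with a test function, the series converges locally uniformly in $\mu$ away from the excluded poles. The main obstacle is precisely this last passage — the hypotheses of the Proposition (completeness, non-negativity of the kernel) are genuinely available only for restricted real $\mu$, so the delicate work is in making the distributional continuation to general complex $\mu$ airtight, for which the Abel-summation route through Theorem \ref{PoissonGegFerP2} is the cleanest vehicle.
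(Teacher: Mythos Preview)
Your approach is essentially the same as the paper's: both invoke the general closure Proposition \eqref{Closure} and the Gegenbauer/Ferrers orthogonality. The only cosmetic difference is ordering --- the paper applies the Proposition to the Gegenbauer system with weight $(1-x^2)^{\mu-\frac12}$ and then converts via \eqref{CnmuFerP}, whereas you apply it directly to $\psi_n={\sf P}_{n+\mu}^{-\mu}$ with weight $w\equiv1$, the conversion having already been absorbed into Theorem~\ref{mainorthogthm}.

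Where you go further than the paper is in actually addressing the hypotheses of the Proposition (completeness of the system, non-negativity of the Poisson kernel) and in treating the extension to complex $\mu$ by analytic continuation; the paper's proof is purely formal on these points. Your use of the unitary map $g\mapsto(1-x^2)^{\mu/2}g$ to transfer Gegenbauer completeness to $\{\psi_n\}$ is clean, and invoking the closed form of Theorem~\ref{PoissonGegFerP2} for the kernel is the natural way to check positivity (the sign works out since $Q_{\mu-\frac12}^1(\chi)<0$ for $\chi>1$, $\mu>-\tfrac12$). The analytic-continuation step is the only place needing real care, and you correctly flag it as such.
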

}
\boro{
\begin{proof}
Using the orthogonality relation for Gegenbauer polynomials
\eqref{orthogGeg} in order to obtain $h_n$ with \eqref{Closure}
produces
\begin{equation}
\sum_{n=0}^\infty \frac{(\mu+n)n!}{\Gamma(2\mu+n)}
C_n^\mu(x)C_n^\mu(x')=\frac{\pi \delta(x-x')}
{(1-x^2)^{\mu-\frac12} 2^{2\mu-1}\Gamma^2(\mu)}.
\end{equation}
Applying \eqref{CnmuFerP} twice completes the proof.
\end{proof}
}

\boro{
\begin{rem}
Note that in the limit as $n\to\infty$ the 
Christoffel-Darboux formula \eqref{CD1} becomes the 
closure relation \eqref{compPb}. Furthermore, the 
Poisson kernel can be obtained from the 
Christoffel-Darboux sum with each coefficient
multiplied by some $t^k$, with $|t|<1$.
This interrelation as well as the interrelation
with universality for orthogonal 
polynomials are quite intriguing
(see the excellent review article by 
Barry Simon entitled ``The Christoffel-Darboux Kernel'' \cite[pp.~295--335]{SimonPerspectives08}).
\end{rem}
}

\begin{cor}
Let $x\in(-1,1)$, $\lambda,\gamma\in\mathbb C$
such that $2\lambda-\gamma+2\not\in-{\mathbb N}_0$. Then
\begin{equation}
\sum_{n=0}^\infty
\frac{(\lambda+n+\frac12)(\gamma)_n(2\lambda+1)_n}
{n!(2\lambda-\gamma+2)_n}
{\sf P}_{n+\lambda}^{-\lambda}(x)
=\frac{\Gamma(2\lambda-\gamma+2)(1-x)^{\frac{\lambda}
{2}-\gamma}(1+x)^{\frac{\lambda}{2}}}{2^{\lambda-\gamma+1}
\Gamma(2\lambda+1)\Gamma(\lambda-\gamma+1)}.
\end{equation}
\end{cor}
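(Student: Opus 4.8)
The plan is to combine the integral evaluation \eqref{thisintFerP} with the closure relation \eqref{compPb}. Recall that \eqref{compPb} provides a resolution of the Dirac delta distribution: for any suitable test function $f$ on $(-1,1)$ one has
\[
f(x')=\sum_{n=0}^\infty
\frac{(\lambda+n+\tfrac12)\Gamma(2\lambda+n+1)}{n!}
{\sf P}_{n+\lambda}^{-\lambda}(x')
\int_{-1}^1 f(x){\sf P}_{n+\lambda}^{-\lambda}(x)\,{\mathrm d}x.
\]
First I would choose the particular test function $f(x)=(1-x)^{\lambda/2-\gamma}(1+x)^{\lambda/2}$, so that the inner integral appearing in the closure expansion is precisely the left-hand side of \eqref{thisintFerP}. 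Substituting the closed-form value from \eqref{thisintFerP},
\[
\int_{-1}^1 f(x){\sf P}_{n+\lambda}^{-\lambda}(x)\,{\mathrm d}x
=\frac{2^{\lambda-\gamma+1}\Gamma(\lambda-\gamma+1)\Gamma(\gamma+n)}
{\Gamma(\gamma)\Gamma(2\lambda-\gamma+n+2)},
\]
and using $\Gamma(\gamma+n)/\Gamma(\gamma)=(\gamma)_n$ together with $\Gamma(2\lambda-\gamma+n+2)=\Gamma(2\lambda-\gamma+2)(2\lambda-\gamma+2)_n$ and $\Gamma(2\lambda+n+1)=\Gamma(2\lambda+1)(2\lambda+1)_n$, the series on the right collapses to
\[
\frac{2^{\lambda-\gamma+1}\Gamma(\lambda-\gamma+1)\Gamma(2\lambda+1)}{\Gamma(2\lambda-\gamma+2)}
\sum_{n=0}^\infty
\frac{(\lambda+n+\tfrac12)(\gamma)_n(2\lambda+1)_n}{n!(2\lambda-\gamma+2)_n}
{\sf P}_{n+\lambda}^{-\lambda}(x'),
\]
which must equal $f(x')=(1-x')^{\lambda/2-\gamma}(1+x')^{\lambda/2}$. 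Solving for the sum and renaming $x'\mapsto x$ yields exactly the claimed identity, with the stated normalizing constant $\Gamma(2\lambda-\gamma+2)/\bigl(2^{\lambda-\gamma+1}\Gamma(2\lambda+1)\Gamma(\lambda-\gamma+1)\bigr)$. The hypothesis $2\lambda-\gamma+2\notin-\mathbb{N}_0$ is exactly what is needed to keep the Pochhammer symbols $(2\lambda-\gamma+2)_n$ in the denominator nonzero, and it matches the hypothesis $2\mu+1\notin-\mathbb{N}_0$ of \eqref{compPb} after the identification of parameters.

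Alternatively — and this is probably the cleaner route to present — I would bypass the distributional argument entirely and instead start from the Gegenbauer expansion recorded in the Remark following \eqref{thisintFerP}, namely
\[
\sum_{n=0}^\infty
\frac{(\lambda+n)(\gamma)_n}{(2\lambda-\gamma+1)_n}
C_n^\lambda(x)
=\frac{\sqrt{\pi}\,\Gamma(2\lambda-\gamma+1)(1-x)^{-\gamma}}
{2^{2\lambda-\gamma}\gamma(\lambda)\Gamma(\lambda-\gamma+\tfrac12)},
\]
then replace $\lambda\mapsto\lambda+\tfrac12$ throughout and convert each Gegenbauer polynomial $C_n^{\lambda+1/2}(x)$ to the Ferrers function ${\sf P}_{n+\lambda}^{-\lambda}(x)$ using \eqref{CnmuFerP}. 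After the substitution $\lambda\mapsto\lambda+\tfrac12$ in \eqref{CnmuFerP}, one has
\[
C_n^{\lambda+\frac12}(x)=\frac{\Gamma(2\lambda+n+1)}{\Gamma(2\lambda+1)\,n!}
\frac{{\sf P}_{n+\lambda}^{-\lambda}(x)}{{\sf P}_{\lambda}^{-\lambda}(x)},
\]
and using the explicit form ${\sf P}_{\lambda}^{-\lambda}(x)=(1-x^2)^{\lambda/2}/(2^\lambda\Gamma(\lambda+1))$ from the Corollary around \eqref{FerPnlammlan}, the $(1-x^2)^{\lambda/2}$ and $(1+x)$ factors combine correctly to produce the asserted weight $(1-x)^{\lambda/2-\gamma}(1+x)^{\lambda/2}$ on the right-hand side. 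Matching the $\lambda$-dependent prefactors — using the Legendre duplication formula $\Gamma(2\lambda+1)=\pi^{-1/2}2^{2\lambda}\Gamma(\lambda+\tfrac12)\Gamma(\lambda+1)$ to absorb the stray $\sqrt{\pi}$ and powers of $2$ — gives the stated constant.

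The main obstacle is purely bookkeeping: reconciling the various $\Gamma$-function and power-of-two prefactors after the shift $\lambda\mapsto\lambda+\tfrac12$, since several Pochhammer symbols change simultaneously (in particular $(2\lambda-\gamma+1)_n$ becoming $(2\lambda-\gamma+2)_n$ and the index $(\lambda+n)$ becoming $(\lambda+n+\tfrac12)$). I do not anticipate any analytic difficulty — convergence of the series for $|x|<1$ is inherited from the Gegenbauer expansion in the Remark, which the authors state was already proved in \cite[Corollary 5.12]{CohlCostasWakhare2019} — so the proof reduces to the single sentence: "Starting from the expansion in the preceding Remark, replace $\lambda\mapsto\lambda+\tfrac12$ and convert the Gegenbauer polynomials to Ferrers functions of the first kind using \eqref{CnmuFerP}; after simplification this completes the proof."
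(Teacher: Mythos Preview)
Your first approach is exactly the paper's: multiply the closure relation \eqref{compPb} by $(1-x)^{\lambda/2-\gamma}(1+x)^{\lambda/2}$, integrate over $(-1,1)$, and insert the value of \eqref{thisintFerP}. Your bookkeeping with the Pochhammer symbols is correct and gives the stated constant.

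One small slip: the hypothesis $2\lambda-\gamma+2\notin-\mathbb N_0$ does \emph{not} ``match'' the hypothesis $2\mu+1\notin-\mathbb N_0$ of \eqref{compPb}; under $\mu\mapsto\lambda$ the latter becomes $2\lambda+1\notin-\mathbb N_0$, which is a separate condition. The role of $2\lambda-\gamma+2\notin-\mathbb N_0$ is solely to keep $(2\lambda-\gamma+2)_n$ from vanishing, as you correctly say in the first half of that sentence.

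Your alternative route via the Gegenbauer expansion in the Remark is also valid and is indeed cleaner analytically, since it avoids the distributional interpretation entirely. The paper acknowledges this equivalence in the Remark but chooses to present the corollary as an illustration of how the closure relation converts a known definite integral into a series identity; the expansion in the Remark was established independently in \cite{CohlCostasWakhare2019}, so using it here would make the section somewhat circular in purpose even if logically sound.
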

\begin{proof}
Applying the closure relation
\eqref{compPb} to 
\eqref{thisintFerP}, multiplying
by the necessary factors
and integrating over $(-1,1)$ 
completes the proof.
\end{proof}

\boro{
\begin{rem}
It is interesting to see that using 
orthogonality for Jacobi polynomials 
\cite[Table 18.3.1]{NIST:DLMF}, the closure relation
for Jacobi polynomials is given by
\begin{equation}
\sum_{n=0}^\infty
\frac{(2n+\alpha+\beta+1)\Gamma(\alpha+\beta+n+1)n!}
{\Gamma(\alpha+n+1)\Gamma(\beta+n+1)}
P_n^{(\alpha,\beta)}(x)
P_n^{(\alpha,\beta)}(x')
=\frac{2^{\alpha+\beta+1}\delta(x-x')}
{(1-x)^\alpha(1+x)^\beta}.
\end{equation}
\end{rem}
}

\boro{
\begin{rem}
It should be noted that if one takes $\mu=0$ in 
\eqref{compPb} then one obtains the closure relation
for Legendre polynomials \eqref{Legpoly}
\cite[(1.17.22)]{NIST:DLMF}
\begin{equation}
\sum_{n=0}^\infty (n+\tfrac12)P_n(x)P_n(x')=\delta(x-x').
\end{equation}
On the other hand, using $\mu=\pm1/2$ in \eqref{compPb} 
produces the following closure relations for the Chebyshev 
polynomials of the first and second kinds 
\cite[Table 18.3.1]{NIST:DLMF}, 
namely:
\begin{eqnarray}
&&\label{ClosTn}\hspace{-5.5cm}\sum_{n=0}^\infty T_n(x)T_n(x')=\frac{\pi}{2}
(1-x^2)^\frac14(1-x'^2)^{\frac14}\,\delta(x-x'), \\
&&\hspace{-5.5cm}\sum_{n=0}^\infty U_n(x)U_n(x')=\frac{\pi\delta(x-x')}
{2(1-x^2)^\frac14(1-x'^2)^{\frac14}}. \label{ClosUn}
\end{eqnarray}
\end{rem}
}

\noindent \boro{Using \eqref{QQorthog} we can obtain a 
closure relation for the Ferrers function of the second kind.
}
\boro{
\begin{cor}
Let $n\in \mathbb N_0$, $x,x'\in(-1,1)$. Then
\begin{equation}
\sum_{k=0}^\infty
\frac{(n+k)k!}{(2n+k-1)!}
{\sf Q}_{k+n-\frac12}^{n-\frac12}(x)
{\sf Q}_{k+n-\frac12}^{n-\frac12}(x')
=\frac{\pi^2}{4}\delta(x-x').
\label{closQb}
\end{equation}
\end{cor}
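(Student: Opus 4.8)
The plan is to derive this closure relation for the Ferrers function of the second kind directly from the already-established closure relation \eqref{compPb} for the Ferrers function of the first kind by specializing the order parameter $\mu$ to the odd half-integer value $\mu = n - \tfrac12$ with $n\in\mathbb N$, and then converting the resulting sum over Ferrers functions of the first kind into a sum over Ferrers functions of the second kind using the connection formula \eqref{FerPFerQ}. This is exactly the distributional analogue of how Corollary \ref{corQQpos} was obtained from Theorem \ref{mainorthogthm}, and of how the Christoffel-Darboux formulas \eqref{CD3}, \eqref{CD4} were obtained from \eqref{CD1}, \eqref{CD2}.

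Concretely, first I would set $\mu = n-\tfrac12$ in \eqref{compPb}, taking care that the hypothesis $2\mu+1 = 2n \not\in -\mathbb N_0$ is satisfied for $n\in\mathbb N$. The summation index in \eqref{compPb} should be renamed (say to $k$) so that the left-hand side becomes
\[
\sum_{k=0}^\infty \frac{(n+k)\,\Gamma(2n+k)}{k!}\,
{\sf P}_{k+n-\frac12}^{\frac12-n}(x)\,{\sf P}_{k+n-\frac12}^{\frac12-n}(x') = \delta(x-x').
\]
Next I would substitute the connection relation \eqref{FerPFerQ}, namely
\[
{\sf P}_{k+n-\frac12}^{\frac12-n}(x) = \frac{2(-1)^n k!}{\pi(2n+k-1)!}\,{\sf Q}_{k+n-\frac12}^{n-\frac12}(x),
\]
for both the $x$ and $x'$ factors. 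This introduces a squared prefactor $\dfrac{4 (k!)^2}{\pi^2 \big((2n+k-1)!\big)^2}$ (the sign $(-1)^{2n}=1$ disappears). Combining with the coefficient $\dfrac{(n+k)\Gamma(2n+k)}{k!}$ and using $\Gamma(2n+k) = (2n+k-1)!$, the prefactor collapses to $\dfrac{4(n+k)\,k!}{\pi^2 (2n+k-1)!}$, so that after multiplying through by $\pi^2/4$ one recovers precisely \eqref{closQb}.

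There is essentially no hard step here; the only thing requiring a moment of care is the bookkeeping of the factorials and the verification that the connection relation \eqref{FerPFerQ} is being applied with the correct parameter identifications (its degree index is $k+n-\tfrac12$ and its order is $\tfrac12-n$, which matches what appears after the specialization $\mu=n-\tfrac12$). One should also note that the restriction to $n\in\mathbb N$ rather than $n\in\mathbb N_0$ in the statement is consistent with the fact that ${\sf Q}$ with order $n-\tfrac12$ and the factorial $(2n+k-1)!$ behave correctly in that range; the case $n=0$ corresponds to the Chebyshev-type closure relation \eqref{ClosTn}. The proof is therefore just: specialize $\mu = n-\tfrac12$ in \eqref{compPb} and apply \eqref{FerPFerQ} twice, then simplify.

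\begin{proof}
Set $\mu=n-\tfrac12$ in the closure relation \eqref{compPb}, which is permissible since $2\mu+1=2n\not\in-\mathbb N_0$, and rename the summation index to $k$:
\[
\sum_{k=0}^\infty\frac{(n+k)\Gamma(2n+k)}{k!}
{\sf P}_{k+n-\frac12}^{\frac12-n}(x)
{\sf P}_{k+n-\frac12}^{\frac12-n}(x')=\delta(x-x').
\]
Now apply the connection relation \eqref{FerPFerQ} to each Ferrers function of the first kind appearing on the left-hand side. Since $(-1)^n(-1)^n=1$ and $\Gamma(2n+k)=(2n+k-1)!$, the product of the two prefactors together with $\dfrac{(n+k)\Gamma(2n+k)}{k!}$ simplifies to
\[
\frac{(n+k)(2n+k-1)!}{k!}\cdot
\frac{4(k!)^2}{\pi^2\big((2n+k-1)!\big)^2}
=\frac{4(n+k)\,k!}{\pi^2(2n+k-1)!}.
\]
Therefore
\[
\sum_{k=0}^\infty\frac{4(n+k)\,k!}{\pi^2(2n+k-1)!}
{\sf Q}_{k+n-\frac12}^{n-\frac12}(x)
{\sf Q}_{k+n-\frac12}^{n-\frac12}(x')=\delta(x-x'),
\]
and multiplying both sides by $\pi^2/4$ yields \eqref{closQb}.
\end{proof}
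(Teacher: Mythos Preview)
Your proof is correct. It takes a slightly different route from the paper: the paper reads off $h_k=\pi^2(2n+k-1)!/(4(n+k)k!)$ directly from the $Q$-orthogonality relation \eqref{QQorthog} and then invokes the general closure formula \eqref{Closure} with $w(x)=1$, whereas you specialize the already-proved $P$-closure relation \eqref{compPb} at $\mu=n-\tfrac12$ and convert via the connection formula \eqref{FerPFerQ}. Both arguments are essentially one line, and they mirror each other in the same way that the derivations of \eqref{QQorthog} from \eqref{orthogonalityPmk} and of \eqref{CD3}--\eqref{CD4} from \eqref{CD1}--\eqref{CD2} do; your route has the minor advantage that it avoids a second appeal to the abstract closure principle \eqref{Closure}, while the paper's route makes the provenance from orthogonality more transparent. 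One small remark: the corollary as stated allows $n\in\mathbb N_0$, not just $n\in\mathbb N$; at $n=0$ the $k=0$ term has the indeterminate form $0/(-1)!$, which should be read as $0$ (and the identity then reduces to the Chebyshev closure \eqref{ClosTn}).
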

}
\boro{
\begin{proof}
From \eqref{QQorthog} we obtain 
$h_n=\pi^2(2n+k-1)!/(4(n+k)k!)$, then using \eqref{Closure} 
with $w(x)=1$ completes the proof.
\end{proof}
}

\boro{The $n=0,1$ specializations of \eqref{closQb} correspond
with the closure relation for the Chebyshev polynomials 
of the first and second kinds \eqref{ClosTn}, \eqref{ClosUn}.}

\boro{
\begin{cor}
Let $x, x'\in(-1,1)$, $x=\cos\theta$, $x'=\cos\theta'$. 
Then
\begin{eqnarray}
&&\hspace{-1cm}\sum_{k=0}^\infty
k^2
{\sf Q}_{k-\frac12}^{-\frac12}(x)
{\sf Q}_{k-\frac12}^{-\frac12}(x')
=
\frac{\pi}{2\sqrt{\sin\theta\sin\theta'}}
\sum_{k=0}^\infty
\cos(k\theta)\cos(k\theta')
=\frac{\pi^2}{4}\delta(x-x'),\\
&&\hspace{-1cm}\sum_{k=0}^\infty
{\sf Q}_{k+\frac12}^{\frac12}(x)
{\sf Q}_{k+\frac12}^{\frac12}(x')
=\frac{\pi}{2\sqrt{\sin\theta\sin\theta'}}
\sum_{k=1}^\infty\sin(k\theta)
\sin(k\theta')
=\frac{\pi^2}{4}\delta(x-x').
\end{eqnarray}
\end{cor}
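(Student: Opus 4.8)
The plan is to read off both chains of equalities from the closure relation \eqref{closQb} by specializing the integer parameter to $n=0$ and $n=1$, and then rewriting the Ferrers functions of the second kind in trigonometric form via \eqref{Qmhalf} and \eqref{Qhalf}. Note at the outset that the rightmost equality in each displayed line, namely the identification with $\tfrac{\pi^2}{4}\delta(x-x')$, is \emph{literally} the statement of \eqref{closQb} with the indicated value of $n$ and requires no new work; only the first equality, which recasts the $\mathsf Q$-bilinear sum as a trigonometric sum, must be verified.

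First I would treat the sine line. Setting $n=1$ in the rational coefficient of \eqref{closQb} gives
\[
\frac{(n+k)\,k!}{(2n+k-1)!}\Big|_{n=1}=\frac{(k+1)\,k!}{(k+1)!}=1,
\]
and \eqref{Qhalf} yields ${\sf Q}_{k+\frac12}^{\frac12}(\cos\theta)=-\sqrt{\pi/(2\sin\theta)}\,\sin((k+1)\theta)$, so the $k$th summand equals $\frac{\pi}{2\sqrt{\sin\theta\sin\theta'}}\sin((k+1)\theta)\sin((k+1)\theta')$; re-indexing $j=k+1$ turns $\sum_{k=0}^\infty$ into $\sum_{j=1}^\infty$ and produces the middle expression. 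For the cosine line I would set $n=0$, where the coefficient collapses to $\frac{(0+k)\,k!}{(k-1)!}=k^{2}$, and use \eqref{Qmhalf} in the form ${\sf Q}_{k-\frac12}^{-\frac12}(\cos\theta)=\sqrt{\pi/(2\sin\theta)}\,\cos(k\theta)/k$; the two factors $1/k$ cancel against $k^{2}$, leaving $\frac{\pi}{2\sqrt{\sin\theta\sin\theta'}}\cos(k\theta)\cos(k\theta')$ as the general term.

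The one delicate point — and essentially the only obstacle — is the lowest-index term of the cosine case: the coefficient of \eqref{closQb} carries a factor $1/\Gamma(2n+k)=1/\Gamma(0)$ when $n=k=0$, so that term is already absent from the closure sum, whereas ${\sf Q}_{k-\frac12}^{-\frac12}(\cos\theta)$ is itself singular there. The cancellation $k^{2}\cdot(1/k)^{2}$ therefore has to be understood as a limit (equivalently, the trigonometric sum is to be read with its base index adjusted accordingly), and all three members of each chain should be interpreted as distributions on $(-1,1)$ rather than as pointwise-convergent series, exactly as in Proposition stated above on closure relations. Once this bookkeeping is made explicit, one observes, as already announced in the text preceding the corollary, that these are precisely the closure relations \eqref{ClosTn} and \eqref{ClosUn} for the Chebyshev polynomials of the first and second kinds, written out in the variable $\theta$.
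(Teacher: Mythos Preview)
Your proposal is correct and follows exactly the same route as the paper's own proof, which simply says ``Start with \eqref{closQb} with $n=0,1$, and using \eqref{Qhalf}, \eqref{Qmhalf}, completes the proof.'' Your additional remarks on the $k=0$ term in the cosine case (the $1/\Gamma(0)$ vanishing versus the apparent $k^{2}\cdot(1/k)^{2}$ cancellation) and on the distributional interpretation are useful clarifications that the paper leaves implicit.
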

}
\boro{
\begin{proof}
Start with \eqref{closQb} with $n=0,1,$ and using \eqref{Qhalf},
\eqref{Qmhalf},
completes the proof.
\end{proof}
}



\section*{Acknowledgements}
Much thanks to \boro{Adri Olde Daalhuis and Nico Temme} for valuable discussions.
The research of R.S.C.-S. was funded by Agencia Estatal de Investigación of Spain, grant number PGC-2018-096504-B-C33.


\begin{thebibliography}{10}

\bibitem{AskeyRazban72}
R.~Askey and B.~Razban.
\newblock An integral for {J}acobi polynomials.
\newblock {\em Simon Stevin}, 46:165–169, 1972.

\bibitem{Cohl12pow}
H.~S. {Cohl}.
\newblock {Fourier, Gegenbauer and Jacobi expansions for a power-law
  fundamental solution of the polyharmonic equation and polyspherical addition
  theorems}.
\newblock {\em Symmetry, Integrability and Geometry: Methods and Applications},
  9(042):26, 2013.

\bibitem{CohlCostasSantos2020}
H.~S. {Cohl} and R.~S. {Costas-Santos}.
\newblock {Multi-integral representations for associated Legendre and Ferrers
  functions}.
\newblock {\em Symmetry}, 12(1598):22, 2020.

\bibitem{CohlCostasWakhare2019}
H.~S. {Cohl}, R.~S. {Costas-Santos}, and T.~V. {Wakhare}.
\newblock {On a generalization of the Rogers generating function}.
\newblock {\em Journal of Mathematical Analysis and Applications},
  475(2):1019--1043, 2019.

\bibitem{Cohletal2021}
H.~S. {Cohl}, J.~{Park}, and H.~{Volkmer}.
\newblock Gauss hypergeometric representations of the ferrers function of the
  second kind.
\newblock {\em SIGMA. Symmetry, Integrability and Geometry. Methods and
  Applications}, 17:Paper No. 053, 33, 2021.

\bibitem{CRTB}
H.~S. Cohl, A.~R.~P. Rau, J.~E. Tohline, D.~A. Browne, J.~E. Cazes, and E.~I.
  Barnes.
\newblock Useful alternative to the multipole expansion of $1/r$ potentials.
\newblock {\em Physical Review A: Atomic and Molecular Physics and Dynamics},
  64(5):052509, Oct 2001.

\bibitem{CohlVolkmerDefInt}
H.~S. Cohl and H.~{Volkmer}.
\newblock Definite integrals using orthogonality and integral transforms.
\newblock {\em Symmetry, Integrability and Geometry:~Methods and Applications,
  Special Issue on Superintegrability, Exact Solvability, and Special
  Functions}, 8, 2012.

\bibitem{NIST:DLMF}
{\it NIST Digital Library of Mathematical Functions}.
\newblock \href{http://dlmf.nist.gov/}{\bf\tt\normalsize http://dlmf.nist.gov},
  Release 1.1.2 of 2021-06-15.
\newblock F.~W.~J. Olver, A.~B. {Olde Daalhuis}, D.~W. Lozier, B.~I. Schneider,
  R.~F. Boisvert, C.~W. Clark, B.~R. Miller, B.~V. Saunders, H.~S. Cohl, and
  M.~A. McClain, eds.

\bibitem{ErdelyiTII}
A.~Erd{\'e}lyi, W.~Magnus, F.~Oberhettinger, and F.~G. Tricomi.
\newblock {\em Tables of Integral Transforms. {V}ol. {II}}.
\newblock McGraw-Hill Book Company, Inc., New York-Toronto-London, 1954.

\bibitem{Grad}
I.~S. Gradshteyn and I.~M. Ryzhik.
\newblock {\em Table of {I}ntegrals, {S}eries, and {P}roducts}.
\newblock Elsevier/Academic Press, Amsterdam, seventh edition, 2007.

\bibitem{IsmailZhangZhou}
M.~E.~H. {Ismail}, R.~{Zhang}, and K.~{Zhou}.
\newblock {$q$-Fractional Askey-Wilson Integrals and Related Semigroups of
  Operators}.
\newblock {\em {\tt in preparation}}, 2021.

\bibitem{Koekoeketal}
R.~Koekoek, P.~A. Lesky, and R.~F. Swarttouw.
\newblock {\em Hypergeometric orthogonal polynomials and their
  {$q$}-analogues}.
\newblock Springer Monographs in Mathematics. Springer-Verlag, Berlin, 2010.
\newblock With a foreword by Tom H. Koornwinder.

\bibitem{Maier18}
R.~S. Maier.
\newblock Algebraic generating functions for {G}egenbauer polynomials.
\newblock In {\em Frontiers in orthogonal polynomials and {$q$}-series},
  volume~1 of {\em Contemp. Math. Appl. Monogr. Expo. Lect. Notes}, pages
  425--444. World Sci. Publ., Hackensack, NJ, 2018.

\bibitem{Olver}
F.~W.~J. Olver.
\newblock {\em {Asymptotics and Special Functions}}.
\newblock AKP Classics. A K Peters Ltd., Wellesley, MA, 1997.
\newblock Reprint of the 1974 original [Academic Press, New York].

\bibitem{salara}
J.~F. S\'{a}nchez-Lara.
\newblock On the {S}obolev orthogonality of classical orthogonal polynomials
  for non standard parameters.
\newblock {\em Rocky Mountain J. Math.}, 47(1):267--288, 2017.

\bibitem{SimonPerspectives08}
B.~Simon.
\newblock The {C}hristoffel-{D}arboux kernel.
\newblock In {\em Perspectives in partial differential equations, harmonic
  analysis and applications, A Volume in Honor of Vladimir G.~Maz'ya's 70th
  Birthday}, volume~79, pages 295--335, Providence, RI, USA, 2008. American
  Mathemtical Society.

\bibitem{Szmy2}
R.~{Szmytkowski}.
\newblock {A note on parameter derivatives of classical orthogonal
  polynomials}.
\newblock {\em arXiv:0901.2639}, 2018.

\end{thebibliography}

\def\cprime{$'$} \def\dbar{\leavevmode\hbox to 0pt{\hskip.2ex \accent"16\hss}d}

\end{document}